\newtheorem{theo}{Theorem}[section]
\newtheorem{prop}[theo]{Proposition}
\newtheorem{defi}[theo]{Definition}
\newtheorem{lemm}[theo]{Lemma}
\newtheorem{coro}[theo]{Corollary}
\newtheorem{conj}[theo]{Conjecture}
\newtheorem{rema}[theo]{Remark}
\newcommand{\mb}{\mathbb}
\newcommand{\mc}{\mathcal}
\newcommand{\mf}{\mathfrak}
\newcommand{\mbf}{\mathbf}
\newcommand{\ra}{\rightarrow}
\DeclareMathOperator{\Ext}{Ext}
\DeclareMathOperator{\Sym}{Sym}
\DeclareMathOperator{\gr}{gr}
\DeclareMathOperator{\Hom}{Hom}
\DeclareMathOperator{\Isom}{Isom}
\DeclareMathOperator{\End}{End}
\DeclareMathOperator{\Spf}{Spf}
\DeclareMathOperator{\Gal}{Gal}
\DeclareMathOperator{\Frob}{Frob}
\DeclareMathOperator{\Ind}{Ind}
\DeclareMathOperator{\Mor}{Mor}
\DeclareMathOperator{\GL}{GL}
\DeclareMathOperator{\SL}{SL}
\DeclareMathOperator{\Ker}{Ker}
\DeclareMathOperator{\Rep}{Rep}
\DeclareMathOperator{\Supp}{Supp}
\DeclareMathOperator{\St}{St}
\DeclareMathOperator{\Sp}{Sp}
\DeclareMathOperator{\sgn}{sgn}
\title{On mod $p$ non-abelian Lubin-Tate theory for $\GL_2(\mb{Q}_p)$}
\date {19th March 2013}
\author{Przemys\l aw Chojecki }
\email{chojecki@math.jussieu.fr}
\begin{document}

\begin{abstract} We analyse the mod p \'etale cohomology of the Lubin-Tate tower both with compact support and without support. We prove that there are no supersingular representations in the $H^1_c$ of the Lubin-Tate tower. On the other hand, we show that in $H^1$ of the Lubin-Tate tower appears the mod p local Langlands correspondence and the mod p local Jacquet-Langlands correspondence, which we define in the text. We discuss the local-global compatibility part of the Buzzard-Diamond-Jarvis conjecture which appears naturally in this context.
\end{abstract}

\maketitle
\tableofcontents

\section{Introduction}

In recent years, the mod p and p-adic local Langlands correspondences emerged as a form of refinement of the l-adic Langlands correspondence for $l = p$. This program was basically started by Christophe Breuil and then, by the work of many people the p-adic Local Langlands correspondence was established for $\GL_2(\mb{Q}_p)$ (see \cite{co} for a final step). Unfortunately, as for now, it is hard to predict how the conjectures should look like for $\GL_2(F)$ where $F$ is a finite extension of $\mb{Q}_p$ (or other groups) basically because there are too many objects on the automorphic side (see \cite{bp}), so that the pure representation theory does not indicate which representations of $\GL_2$ we should choose for our correspondence. A possible remedy to this situation might come by looking at the mod p and completed p-adic cohomology of the Lubin-Tate tower. Let us remind the reader that the classical Local Langlands correspondence was also firstly proved for $\GL_2$ by representation-theoretic methods. Only afterwards by using geometric arguments and finding the correspondence in the l-adic cohomology of the Lubin-Tate tower, it was proved for $\GL_n$ by Harris and Taylor in \cite{ht}. Our aim is to do a step in this direction in the $l=p$ setting, hoping that this will give us an insight how to define a correspondence for other groups than $\GL_2(\mb{Q}_p)$.

By the recent work of Emerton (see \cite{em2}) we know that the p-adic completed (resp. mod p) cohomology of the tower of modular curves realizes the p-adic (resp. mod p) Local Langlands correspondence. In this article we will obtain an analogous but weaker result for the mod p cohomology of the Lubin-Tate tower over $\mb{Q}_p$. In fact, we will analyse both the cohomology with compact support and the cohomology without support of the Lubin-Tate tower. Here are the two main results which we prove:
\newline
\newline
(1) In the first cohomology group $H^1 _{LT, \bar{\mb{F}}_p}$  of the Lubin-Tate tower appears the mod p local Langlands correspondence and the naive mod p Jacquet-Langlands correspondence, meaning that there is an injection of representations
$$ \pi \otimes \bar{\rho} \hookrightarrow H^1 _{LT, \bar{\mb{F}}_p}$$
and $\sigma \otimes \pi \otimes \bar{\rho}$ appears as a subquotient in $H^1 _{LT, \bar{\mb{F}}_p}$, where $\pi$ is a supersingular representation of $\GL_2(\mb{Q}_p)$, $\bar{\rho}$ is its associated local mod p Galois representation and $\sigma$ is the naive mod p Jacquet-Langlands correspondence (for details, see Section 8).
\newline
\newline
(2) The first cohomology group $H^1 _{LT, c, \bar{\mb{F}}_p}$ with compact support of the Lubin-Tate tower does not contain any supersingular representations. This suprising result shows that the mod p situation is much different from its mod l analogue. It also permits us to show that the mod p local Langlands correspondence appears in $H^1$ of the ordinary locus - again a fact which is different from the l-adic setting for supercuspidal representations.
\newline
\newline
Before sketching how we obtain the above results, let us outline the first main difference with the non-abelian Lubin-Tate theory in the l-adic case. When $l\not = p$ the comparison between the Lubin-Tate tower and the modular curve tower is made via vanishing cycles. For that, we need to know that the stalks of vanishing cycles gives the cohomology of the Lubin-Tate tower, or in other words we need an analogue of the theorem proved by Berkovich in \cite{ber2}. But when $l=p$, the statement does not hold anymore (see Remark 3.8.(iv) in \cite{ber2}) and hence we cannot imitate directly the arguments from the l-adic theory.

To circumvent this difficulty, we work from the beginning at the rigid-analytic level and consider embeddings from the ordinary and the supersingular tubes into modular curves. This gives two long exact sequences of cohomology, depending on whether we take compact support or a support in the ordinary locus and we start our analysis by resuming facts about the geometry of modular curves. We recall a decomposition of the ordinary locus, which proves that its cohomology is induced from some proper parabolic subgroup of $\GL_2$. We use this fact several times in order to have vanishing of the cohomology of ordinary locus after localising at a supersingular representation $\pi$ of $\GL_2(\mb{Q}_p)$. We then recall standard facts about admissible representations and review the functor of localisation at $\pi$ which comes out of the work of Paskunas.

We then turn to the analysis of the supersingular locus. In this context, naturally appears a quaternion algebra $D^{\times} /\mb{Q}$ which is ramified exactly at $p$ and $\infty$. We define the local fundamental representation of Deligne in our setting (which appeared for the first time in the letter of Deligne \cite{de}) and we show a decomposition of the cohomology of supersingular locus. At this point we will be able to show that $H^1$ of the tower of modular curves injects into $H^1$ of the Lubin-Tate tower hence proving part of (1). 

Having established this result, we start analysing mod p representations of the p-adic quaternion algebra and define a candidate for the mod p Jacquet-Langlands correspondence $\sigma _{\mf{m}}$ which we later show to appear in the cohomology. It will a priori depend on a global input, namely a maximal ideal $\mf{m}$ of a Hecke algebra corresponding to some modular mod p Galois representation $\bar{\rho}$, but we conjecture that it is independent of $\mf{m}$. This is reasonable as it would follow from the local-global compatibility part of the Buzzard-Diamond-Jarvis conjecture. After further analysis of $\sigma _{\mf{m}}$ we are able to finish the proof of (1).

Using similar techniques, we start analysing the cohomology with compact support of the Lubin-Tate tower. By using the Hochschild-Serre spectral sequence, we are able to reduce (2) to the question of whether supersingular Hecke modules of the mod p Hecke algebra at the pro-p Iwahori level appear in the $H^1_c$ of the Lubin-Tate tower at the pro-p Iwahori level. We solve this question by explicitely computing some cohomology groups.

While proving the above theorems, we will also prove that the first cohomology group of the Lubin-Tate tower and the first cohomology group of the ordinary locus are non-admissible smooth representations. In particular, they are much harder to describe than their mod l analogues. Moreover our model for the mod p Jacquet-Langlands correspondence $\sigma _{\mf{m}}$ (actually we propose three candidates for the correspondence which we discuss in Section 7.4) is a representation of $D^{\times}(\mb{Q}_p)$ of infinite length. This indicates that already for $D^{\times}(\mb{Q}_p)$ the mod p Langlands correspondence is complicated (as in the work of \cite{bp}, representations in question are not of finite length). On the other hand, the case of $D^{\times}(\mb{Q}_p)$ is much simpler than that of $\GL_2(F)$ for $F$ a finite extension of $\mb{Q}_p$, and hence we might be able to describe $\sigma _{\mf{m}}$ precisely. Natural question in this discussion is the local-global compatibility part of the Buzzard-Diamond-Jarvis conjecture (see Conjecture 4.7 in \cite{bdj}) which says that we have an isomorphism
$$\mbf{F}[\mf{m}] \simeq \sigma _{\mf{m}} \otimes \pi^p(\bar{\rho})$$
where $\mbf{F}$ denotes locally constant functions on $D^{\times}(\mb{Q}) \backslash D^{\times}(\mb{A}_f)$ with values in $\bar{\mb{F}}_p$ and $\pi^p(\bar{\rho})$ is a representation of $\GL_2(\mb{A}_f ^p)$ associated to $\bar{\rho}$ by the modified Langlands correspondence.

At the end, we remark that our arguments work well in the $l\not =p$ setting and omit the use of vanishing cycles. As some of our arguments are geometric, we can also get similar results in the p-adic setting. We hope to return to this issue in our future work. Also, the geometry of modular curves is very similar to the geometry of Shimura curves and hence we hope that some of the reasonings in this article will give an insight into the nature of the mod p local Langlands correspondence of $\GL_2(F)$ for $F$ a finite extension of $\mb{Q}_p$.
\newline 
\newline
\textbf{Acknowledgements.} I would like to heartily thank my advisor Jean-Francois Dat who proposed this problem to me and who patiently answered many of my questions which aroused during the work on this subject. I thank Michael Rapoport for inviting me to give two talks on the content of this paper at the University of Bonn in January 2013 and Peter Scholze for asking useful questions. I also thank Yongquan Hu for a useful remark.

\section{Geometry of modular curves}

Let $X(Np^m)$ be the Katz-Mazur compactification of the modular curve associated to the moduli problem $(\Gamma (p^n),\Gamma _1(N))$ (see \cite{km}) which is defined over $\mb{Z}[1/N, \zeta _{p^n}]$, where $\zeta _{p^n}$ is a primitive $p^n$-th root of unity, that is $X(Np^m)$ parametrizes (up to isomorphism) triples $(E, \phi, \alpha)$, where $E$ is an elliptic curve, $\phi: (\mb{Z} / p^n \mb{Z})^2 \ra E[p^n]$ is a Drinfeld level structure and $\alpha : \mb{Z} / N \mb{Z} \ra E[N]$ is a $\Gamma _1 (N)$-structure. We consider the integral model of it defined over $\mb{Z} _p ^{nr}[\zeta _{p^n}]$, where $\mb{Z} _p ^{nr}$ is the maximal unramified extension of $\mb{Z}_p$, which we will denote also by $X(Np^m)$. Let us denote by $X(Np ^m) ^{an}$ the analytification of $X(Np^m)$ which is a Berkovich space. 

Recall that there exists a reduction map $\pi : X(Np^m)^{an} \ra \overline{X(Np ^m)}$, where $\overline{X(Np ^m)}$ is the special fiber of $X(Np^m)$. We define $\overline{X(Np ^m) _{ss}}$ (resp. $\overline{X(Np ^m) _{ord}}$) to be the set of supersingular (resp. ordinary) points in $\overline{X(Np ^m)}$. Define the tubes $X(Np^m) _{ss} = \pi ^{-1} (\overline{X(Np ^m) _{ss}})$ and $X(Np^m) _{ord} = \pi ^{-1} (\overline{X(Np ^m) _{ord}})$ inside $X(Np ^m) ^{an}$ of supersingular and ordinary points respectively. 

\subsection{Two exact sequences}

We know that $X(Np^m) _{ss}$ is an open analytic subspace of $X(Np^m) ^{an}$ isomorphic to some copies of Lubin-Tate spaces, where the number of copies is equal to the number of points in $\overline{X(Np ^m) _{ss}}$ (see section 3 of \cite{bu}).  We have a decomposition $X(Np ^m) ^{an} = X(Np ^m) _{ss} \cup X(Np ^m) _{ord}$ and we put $j: X(Np ^m) _{ss} \hookrightarrow X(Np ^m) ^{an}$ and $i: X(Np ^m) _{ord} \ra X(Np ^m) ^{an}$. We remark that $j$ is an open immersion of Berkovich spaces. Let $F$ be a sheaf in the \'etale topoi of $X(Np^m) ^{an}$. By the general formalism of six operations (due in this setting to Berkovich, see \cite{ber4}) we have a short exact sequence:
$$ 0 \ra j _{!} j^{*} F \ra F \ra i_{*} i ^{*} F \ra 0$$
which gives a long exact sequence of \'etale cohomology groups:
$$... \ra H^0 (X(Np^m) _{ord}, i^* F) \ra H^1 _{c} (X(Np^m) _{ss}, F) \ra H^1  (X(Np^m) ^{an}, F) \ra H^1  (X(Np^m) _{ord}, i^* F) \ra ...$$

On the other hand, we can consider a similar exact sequence for the cohomology without compact support, but instead considering support on the ordinary locus. This results in the long exact sequence
$$... \ra H^1 _{X_{ord}} (X(Np^m)^{an}, F) \ra H^1  (X(Np^m) ^{an}, F) \ra H^1  (X(Np^m) _{ss}, i^* F) \ra H^2 _{X_{ord}}(X(Np^m)^{an}, F) \ra ...$$
where we have denoted by $H^1 _{X_{ord}} (X(Np^m)^{an}, F)$ the \'etale cohomology of $X(Np^m)^{an}$ with support on $X(Np^m)_{ord}$. Because of the vanishing of the cohomology with compact support of the supersingular locus localised at $\pi$ (see the explanation in the next sections), this exact sequence will be of more importance to us later on. We will analyse those two exact sequences simultanously.

\subsection{Decomposition of ordinary locus}

Let us recall that we have the Weil pairing on elliptic curves
$$ e_{p^m} : E[p^m] \times E[p^m] \ra \mu _{p^m}$$
Denote by $\zeta _{p^m}$ a $p^m$-th primitive root of unity. For $a \in (\mb{Z}/p^m \mb{Z})^*$ we define a substack $X(Np^m)_a$ of $X(Np^m)$ as the moduli problem which classifies elliptic curves $E$ with level structures $(\phi, \alpha)$ such that $\ e_{p^m}(\phi( \begin{smallmatrix}1 \\ 0 \end{smallmatrix}), \phi(\begin{smallmatrix} 0 \\ 1 \end{smallmatrix})) = \zeta _{p^m} ^a$. This moduli problem is representable by a scheme over $\mb{Z}_p ^{nr}[\zeta _{p^m}]$ (see chapter 9 of \cite{km}). Moreover the coproduct $\coprod _a X(Np^m)_a$ is a regular model of $X(Np^m)$ over $\mb{Z}_p^{nr}[\zeta _{p^m}]$.

Let us denote by $\overline{X(Np^m)} _{a, ord}$ the ordinary locus of the reduction of $X(Np^m) _{a}$. We recall (see for example chapter 13 of \cite{km}) that the set of irreducible components of $\overline{X(Np^m)} _{ord}$ consists of smooth curves $C_{a,b}(Np^m)$ defined on points by:
$$C _{a,b}(Np^m)(S) = \{ (E,\phi, \alpha) \in \overline{X(Np^m)} _{a, ord}(S) \ | \ e_{p^m}(\phi( \begin{smallmatrix}1 \\ 0 \end{smallmatrix}), \phi(\begin{smallmatrix} 0 \\ 1 \end{smallmatrix})) = \zeta _{p^m} ^a \textrm{  and  } \Ker \phi = b \}$$
where $a \in (\mb{Z}/p^m \mb{Z})^*$ and $b \in \mb{P} ^1(\mb{Z}/p^m \mb{Z})$ is regarded as a line in $\mb{Z}/p^m \mb{Z} \times \mb{Z}/p^m \mb{Z}$. We observe that $\zeta _{p^m} ^a = 1$ modulo p.

We are interested in lifting $C_{a,b}(Np^m)$ to characteristic zero and so we put 
$$\mb{X} _{a,b}(Np^m) = \pi ^{-1} (C _{a,b}(Np^m))$$
Hence $\{\mb{X} _{a,b}(Np^m) \}$ for $a \in (\mb{Z}/p^m \mb{Z})^*$, $b \in \mb{P} ^1(\mb{Z}/p^m \mb{Z})$ form a decomposition of the ordinary locus $X(Np^m) _{ord}$ because different $C_{a,b}(Np^m)$ intersect only at supersingular points. The spaces $\mb{X} _{a,b}(Np^m)$ may be regarded as analytifications of Igusa curves. For a detailed discussion, see \cite{col}. We do not determine here whether $\mb{X} _{a,b}(Np^m)$ are precisely the connected components of $X(Np^m)_{ord}$. We remark also that one can give a moduli description of each $\mb{X} _{a,b}(Np^m)$. 


There is an action of $\GL_2(\mb{Z}/p^m \mb{Z})$ on $X(Np^m) ^{an}$ which is given on points by:
$$(E,\phi, \alpha) \cdot g = (E, \phi \circ g, \alpha)$$
for $g \in \GL_2(\mb{Z}/p^m \mb{Z})$. Observe that if $e_{p^m}(\phi( \begin{smallmatrix}1 \\ 0 \end{smallmatrix}), \phi(\begin{smallmatrix} 0 \\ 1 \end{smallmatrix})) = \zeta _{p^m} ^a$, then $e_{p^m}((\phi \circ g)( \begin{smallmatrix}1 \\ 0 \end{smallmatrix}), (\phi \circ g)(\begin{smallmatrix} 0 \\ 1 \end{smallmatrix})) = \zeta _{p^m} ^{a \cdot \textrm{det} g}$ for $g \in \GL_2(\mb{Z}/p^m \mb{Z})$ and so $g$ induces an isomorphism between $\mb{X} _{a,b}(Np^m)$ and $\mb{X} _{a \cdot \textrm{det} g, g^{-1} \cdot b}(Np^m)$.

For $b \in \mb{P} ^1(\mb{Z}/p^m \mb{Z})$ there is a Borel subgroup $B_m(b)$ in $\GL_2(\mb{Z}/p^m \mb{Z})$  which fixes $b$ and hence the Borel subgroup $B_m(b)^+ = B_m(b) \cap \SL _2(\mb{Z}/p^m \mb{Z})$ in $\SL _2(\mb{Z}/p^m \mb{Z})$ stabilises $\mb{X} _{a,b}(Np^m)$.

Let $b = \infty = (\begin{smallmatrix}1 \\ 0 \end{smallmatrix}) \in \mb{P} ^1(\mb{Z}/p^m \mb{Z})$. By the above considerations we have
$$H^i (X(Np^m) _{ord}, i^* F) = \bigoplus _{a,b} H^i  (\mb{X} _{a,b}(Np^m), (i^* F) _{| \mb{X} _{a,b}(Np^m)}) \simeq $$
$$\simeq \Ind ^{\GL_2(\mb{Z}/p^m \mb{Z})} _{B_m(\infty)}\left( \bigoplus _{a} H^i  (\mb{X} _{a,\infty}(Np^m), (i^* F) _{| \mb{X} _{a,\infty}})) \right)$$
and also
$$ H^1 _{X_{ord}} (X(Np^m)^{an}, F) \simeq \Ind ^{\GL_2(\mb{Z}/p^m \mb{Z})} _{B_m(\infty)} \left( \bigoplus _{a} H^1 _{\mb{X} _{a, \infty}(Np^m)}(X(Np^m)^{an}, F) \right)$$
Those results will be extremely useful for us later on, when we introduce the localisation at a given supersingular representation. 

\subsection{Supersingular points}

Let us denote by $D$ the quaternion algebra over $\mb{Q}$ which is ramified precisely at $p$ and at $\infty$. We recall the description of supersingular points $\overline{X(Np ^m) _{ss}}$ which has appeared in \cite{de} and then was explained in \cite{ca}, sections 9.4 and 10.4. Fix a supersingular elliptic curve $\overline{E}$ over $\mb{F} _p$ and a two-dimensional vector space $V$ over $\mb{Q} _p$. Let $\textrm{det}(\overline{E}) = \mb{Z}$ be the determinant of $\overline{E}$. Denote by $W(\bar{\mb{F}} _p/ \mb{F} _p)$ the Weil group of $\mb{F} _p$ and put
$$\Delta = \left( W(\bar{\mb{F}} _p/ \mb{F} _p) \times \Isom (\textrm{det}(\overline{E})\otimes _{\mb{Z}} \mb{Q} _p(1), \wedge ^2 V) \right) / {\sim}$$
where $\sim$ is defined by $(\sigma, \beta) \sim (\sigma \Frob ^k, p^{-k} \beta)$ for $k \in \mb{Z}$, where $\Frob: x \mapsto x^p$ is a Frobenius map. We define $K_m$ to be the kernel of  $D^{\times}(\mb{Z} _p) \ra D^{\times}( \mb{Z} / p^m \mb{Z})$ and we let $K(N) = \{ (\begin{smallmatrix} a & b \\ c & d \end{smallmatrix}) \in \GL_2(\mb{Z}) \ | \ a \equiv 1 \mod N \textrm{  and  } c \equiv 0 \mod N \}$, viewed as a subgroup of $\GL_2(\mb{A} _f ^p)$ by the diagonal embedding. Then:
$$\overline{X(Np ^m) _{ss}} = \Delta / K_m \times _{D^{\times}(\mb{Q})} \GL_2(\mb{A} _f ^p) / K(N)$$
Every $\delta \in \Delta / K_m$ furnishes a supersingular elliptic curve $E(\delta)$, so that for every $\delta \in \Delta$ we can consider the Lubin-Tate tower $LT_{\delta} = \varprojlim _m LT_{\delta}(p^m)$, which is the generic fiber of the deformation space of the formal group attached to $E(\delta)$ and where $LT _{\delta}(p^m)$ denotes the generic fiber of the deformation space of formal groups with $p^m$-level structure (see \cite{da} for details on the Lubin-Tate tower). Let us denote by $\mb{E}(\delta)$ the universal formal group deforming the formal group attached to $E(\delta)$ and let $\mb{E}(\Delta) = \coprod _{\delta \in \Delta} \mb{E}(\delta)$.  By 9.4 of \cite{ca}, the universal formal group over $\varprojlim _{N, p^m} \overline{X(Np ^m) _{ss}}$ is isomorphic to $\mb{E}(\Delta) \times _{D^{\times}(\mb{Q})} \GL_2(\mb{A} _f ^p)$ and hence we conclude that 
$$\varprojlim _{Np^m} X(Np ^m) _{ss} \simeq LT _{\Delta} \times _{D^{\times}(\mb{Q})} \GL_2(\mb{A} _f ^p)$$
where $LT _{\Delta} = \coprod _{\delta \in \Delta} LT _{\delta}$. We also get a description at a finite level
$$X(Np ^m) _{ss} \simeq LT _{\Delta / K_m} \times _{D^{\times}(\mb{Q})} \GL_2(\mb{A} _f ^p) / K(N)$$
where $LT _{\Delta / K_m} = \coprod _{\delta \in \Delta / K_m} LT _{\delta}(p^m)$.

These results will allow us later on to define the local fundamental representation and analyze the action of the quaternion algebra $D^{\times}$.

\section{Admissibility of cohomology groups}

In this section we will recall the notion of admissibility in the context of mod p representations. It will be crucial in our study of cohomology.
\subsection{General facts and definitions} We start with general facts about admissible representations. In our definitions, we will follow \cite{em3}. Let $k$ be a field of characteristic $p$ and let $G$ be a connected reductive group over $\mb{Q}_p$.

\begin{defi} Let $V$ be a representation of $G$ over $k$. A vector $v \in V$ is smooth if $v$ is fixed by some open subgroup of $G$. Let $V_{sm}$ denote the subset of smooth vectors of $V$. We say that a $G$-representation $V$ over $k$ is smooth if $V=V_{sm}$.

A smooth $G$-representation $V$ over $k$ is admissible if $V^H$ is finitely generated over $k$ for every open compact subgroup $H$ of $G$.
\end{defi}
\begin{prop} The category of admissible $k$-representations is abelian. 
\end{prop}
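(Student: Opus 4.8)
The plan is to identify, via Pontryagin duality, the category of admissible representations with the category of finitely generated modules over a Noetherian ring, from which abelianness is immediate.

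First recall that the category $\Rep_{sm}(G)$ of smooth $G$-representations over $k$ is abelian: a subrepresentation of a smooth representation is smooth and a quotient of a smooth representation is smooth, so kernels, cokernels and finite direct sums in $\Rep_{sm}(G)$ are computed on underlying $k$-vector spaces. Hence it suffices to check that the full subcategory of admissible representations is closed under finite direct sums, subobjects and quotients. Direct sums are immediate since $(V\oplus W)^H = V^H\oplus W^H$ for every open compact $H$; and if $V$ is admissible and $V'\subseteq V$ a subrepresentation, then $(V')^H\subseteq V^H$ is finite-dimensional, so $V'$ is admissible. Moreover admissibility may be tested on any cofinal family of open compact subgroups, because $H'\subseteq H$ implies $V^H\subseteq V^{H'}$. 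Thus the only substantive point is that a quotient $V/V'$ of an admissible representation is admissible.

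Fix a cofinal family of \emph{pro-$p$} open compact subgroups $H\subseteq G$; each such $H$ is a compact $p$-adic analytic group. Let $\Lambda_H = k[[H]] = \varprojlim_{H''} k[H/H'']$ be the completed group algebra. By Lazard's theorem $\Lambda_H$ is (left and right) Noetherian, and since $H$ is pro-$p$ and $\mathrm{char}\,k = p$, the ring $\Lambda_H$ is local with maximal ideal the augmentation ideal $\mf{m}_H$ and residue field $k$. Pontryagin duality $V\mapsto V^\vee = \Hom_k(V,k)$ (with its natural topology) is an exact anti-equivalence between $\Rep_{sm}(H)$ and a suitable category of pseudocompact $\Lambda_H$-modules, under which $V^H$ is dual to $V^\vee/\mf{m}_H V^\vee$ (as $\mf{m}_H$ is topologically generated by $\{h-1:h\in H\}$ and $V$ is smooth). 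By the topological Nakayama lemma, $V^\vee$ is finitely generated over $\Lambda_H$ if and only if $V^\vee/\mf{m}_H V^\vee$ is finite-dimensional over $k$, i.e. if and only if $V^H$ is finite-dimensional. Combining this with the previous reduction: a smooth $G$-representation $V$ is admissible if and only if $V^\vee$ is a finitely generated $\Lambda_H$-module for every $H$ in our cofinal family.

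Now let $f\colon V\to W$ be a morphism of admissible representations and fix $H$ in the cofinal family. The restriction functor $\Rep_{sm}(G)\to\Rep_{sm}(H)$ is exact, so $\Ker f$ and $\operatorname{coker}f$ computed in $\Rep_{sm}(G)$ restrict to the kernel and cokernel of $f$ in $\Rep_{sm}(H)$. Applying $(-)^\vee$ yields an exact sequence of $\Lambda_H$-modules
$$0\ra (\operatorname{coker}f)^\vee \ra W^\vee \xrightarrow{\ f^\vee\ } V^\vee \ra (\Ker f)^\vee \ra 0 .$$
Since $V^\vee$ and $W^\vee$ are finitely generated over the Noetherian ring $\Lambda_H$, so are the submodule $(\operatorname{coker}f)^\vee$ and the quotient $(\Ker f)^\vee$; as this holds for all $H$ in the family, $\Ker f$ and $\operatorname{coker}f$ are admissible. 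Hence admissible representations form a full abelian subcategory of $\Rep_{sm}(G)$. The only non-formal ingredients are the Noetherianity and locality of the Iwasawa algebra, the exactness of Pontryagin duality, and the topological Nakayama lemma; given these, everything else is formal. A more elementary route would try to bound $(V/V')^{H'}$ via the inflation--restriction sequence, but this is obstructed by the fact that $H^1(H',V')$ need not be finite-dimensional for admissible $V'$, which is precisely why the duality argument is the clean one.
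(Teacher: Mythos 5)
Your proof is correct and follows essentially the same route as the paper, which simply cites Emerton's anti-equivalence between admissible representations and finitely generated augmented modules over completed group rings (Propositions 2.2.13 and 2.4.11 of \cite{em3}). You have unpacked that citation — Noetherianity of the Iwasawa algebra, topological Nakayama to characterize admissibility as finite generation of the dual, exactness of Pontryagin duality — rather than introducing a genuinely different idea.
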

\begin{proof} This category is (anti-)equivalent to the category of finitely generated augmented modules over certain completed group rings. See Proposition 2.2.13 and 2.4.11 in \cite{em3}.
\end{proof}
Now, we will prove an analogue of Lemma 13.2.3 from \cite{bo} in the $l=p$ setting. We will later apply this lemma to the cohomology of the ordinary locus to force its vanishing after localisation at a supersingular representation of $\GL_2(\mb{Q}_p)$.
\begin{lemm}
For any smooth admissible representation $(\pi,V)$ of the parabolic subgroup $P \subset G$ over $k$, the unipotent radical $U$ of $P$ acts trivially on $V$.
\end{lemm}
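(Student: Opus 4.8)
The plan is to exploit the contracting dynamics of a suitable element of the centre of a Levi factor of $P$, combined with a dimension count that uses admissibility. We may assume $P \ne G$, so $U \ne 1$, and we fix a Levi decomposition $P = MU$. Choose $t = \lambda(p)$, where $\lambda\colon \mb{G}_m \ra Z(M)$ is a cocharacter acting on $\mathrm{Lie}(U)$ with strictly positive weights; then $t$ centralises $M$ and strictly contracts $U$, in the sense that for every sufficiently small compact open subgroup $U_0 \subset U$ one has $t U_0 t^{-1} \subseteq U_0$ and $\bigcup_{n\ge 0} t^{-n}U_0 t^{n} = U$. Write $U_0^{(m)} := t^{-m}U_0 t^{m}$; this is an increasing exhaustion of $U$ by compact open subgroups.

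Next I would put level subgroups in a convenient shape. Let $v \in V$. By smoothness $v$ is fixed by some compact open subgroup $P_1 \subset P$; then $H_M := M \cap P_1$ is a compact open subgroup of $M$ (as $M$ is closed and $P_1$ open in $P$), and after replacing $U \cap P_1$ by the finite intersection of its $H_M$-conjugates we obtain a compact open subgroup $U_0 \subset U$ normalised by $H_M$, with $H_M U_0$ a compact open subgroup of $P$ fixing $v$. Since $t$ centralises $M$, it centralises $H_M$ and normalises each $U_0^{(m)}$, with $t (H_M U_0^{(m+1)}) t^{-1} = H_M U_0^{(m)}$; in particular each $H_M U_0^{(m)}$ is a compact open subgroup of $P$.

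The heart of the argument is then a dimension count. Put $Z^{(m)} := V^{H_M U_0^{(m)}}$. By admissibility $Z^{(0)}$ is finite dimensional. Since $g\cdot V^{K} = V^{gKg^{-1}}$ for any $g$ and any subgroup $K$, the invertible operator $t$ carries $Z^{(m+1)}$ isomorphically onto $Z^{(m)}$, so all the $Z^{(m)}$ have the same finite dimension. On the other hand $U_0^{(m)}$ increases with $m$, so $Z^{(0)} \supseteq Z^{(1)} \supseteq \cdots$ is a decreasing chain of subspaces of the finite dimensional space $Z^{(0)}$; having constant dimension, they must all be equal. Hence $v \in Z^{(0)} = \bigcap_{m\ge 0} Z^{(m)} \subseteq \bigcap_{m\ge 0} V^{U_0^{(m)}} = V^{U}$, the last equality because $\bigcup_m U_0^{(m)} = U$. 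As $v$ was arbitrary, $V = V^U$, i.e. $U$ acts trivially. Note that the characteristic of $k$ plays no role here; the only representation-theoretic input is the finite dimensionality of $V^{H_M U_0}$.

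The only real obstacle is the structure-theoretic bookkeeping: producing a cofinal family of compact open subgroups of $P$ of the product form $H_M U_0$ with $t$ centralising $H_M$ and $H_M$ normalising $U_0$, and verifying the exhaustion $\bigcup_m t^{-m}U_0 t^{m} = U$ also when $U$ is non-abelian. Both are routine consequences of the structure theory of $p$-adic parabolic subgroups — one may argue in suitable coordinates on $U$ in which $\lambda$ acts by scaling — but this is where the geometry of $P$ enters; there is no cohomological input at all.
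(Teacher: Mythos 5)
Your argument is essentially the paper's own proof: both choose a central element of a Levi whose conjugation dilates the unipotent part of the level subgroup, and use admissibility together with $g\cdot V^{K}=V^{gKg^{-1}}$ to force the nested chain of finite-dimensional invariant spaces to be constant, hence the vector is fixed by all of $U$. The only quibble is your claim that \emph{every} sufficiently small compact open $U_0\subset U$ satisfies $tU_0t^{-1}\subseteq U_0$ (not literally true in general; one only needs a cofinal family of such $U_0$, e.g.\ standard congruence subgroups, as you yourself indicate), a point the paper glosses over in exactly the same way.
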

\begin{proof} Let $L$ be a Levi subgroup of $P$, so that $P=LU$. Let $v \in V$ and let $K_P = K_L K_U$ be a compact open subgroup of $P$ such that $v\in V ^{K_P}$. We choose an element $z$ in the centre of $L$ such that:
$$z^{-n}K_P z^n \subset ... \subset z^{-1}K_P z \subset K_P \subset z K_P z^{-1} \subset ... \subset z^n K_P z^{-n} \subset ...$$
and $\bigcup _{n \geq 0} z^n K_P z^{-n} = K_L U$. For every $n$ and $m$, modules $V^{z^{-n}K_P z^n}$ and $V^{z^{-m}K_P z^m}$ are of the same length as they are isomorphic via $\pi(z ^{n-m})$ and hence we have not only an isomorphism but an equality $V^{z^{-n}K_P z^n} = V^{z^{-m}K_P z^m}$. Thus for every $x \in V ^{K_P}$ we have $x \in V^{K_P} = V^{z^{-n}K_P z^n}=V^{K_L U}$ which is contained in $V^U$.  
\end{proof}
We also record the following result of Emerton for the future use.
\begin{lemm} Let $V = \Ind _P ^G W$ be a parabolic induction. If $V$ is a smooth admissible representation of $G$ over $k$, then $W$ is a smooth admissible representation of $P$ over $k$.
\end{lemm}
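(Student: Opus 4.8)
The plan is to reduce the statement about the parabolic induction $V = \Ind_P^G W$ to the previous lemma via Frobenius reciprocity. First I would observe that smoothness of $W$ is essentially automatic from smoothness of $V$: for $w \in W$, the function $f_w$ supported on $P$ with $f_w(1) = w$ need not lie in the smooth induction, so instead I would argue directly. Note that $W$ embeds $P$-equivariantly into $V$ by sending $w$ to the function $f_w$ supported on $PK$ for a suitable open $K$; more cleanly, there is an evaluation-at-$1$ map, but the cleanest route is: since $\Ind_P^G$ here denotes smooth induction, by definition every element of $V$ is a locally constant function $f : G \to W$ with $f(pg) = \pi_W(p) f(g)$, and such $f$ automatically take values in $W_{sm}$ because $f(p) = \pi_W(p)f(1)$ and local constancy near $1 \in G$ forces $f(1)$ to be fixed by an open subgroup of $P$. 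Hence $W = W_{sm}$, giving smoothness.

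For admissibility, the key step is to exhibit $W$ as a subquotient — in fact a subrepresentation after restriction along a well-chosen embedding, or a quotient — of $V$ restricted to $P$, and then invoke that the category of admissible representations is abelian (the Proposition quoted above) together with the fact that admissibility passes to subrepresentations and is inherited under restriction to closed subgroups. Concretely, I would use the Iwasawa/Bruhat decomposition: $G = \coprod_{w \in W_G} P w P$ with the big cell $P w_0 \bar{N}$ open, and consider the map $V \to W$ given by $f \mapsto f(g_0)$ for a fixed $g_0$; its restriction to the $P$-subrepresentation of functions supported on the open cell $P g_0$ realizes $W$ as a $P$-equivariant quotient of a subrepresentation of $V|_P$. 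Since $V$ is admissible over $G$, $V|_P$ is admissible over $P$ (for any open compact $H_P \subset P$, pick an open compact $H \subset G$ with $H \cap P \subset H_P$... actually one needs $H_P$ itself to be open compact in $G$, which it is since $P$ is closed), hence so is the subrepresentation, hence so is the quotient $W$ by the abelian-category statement.

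The main obstacle I anticipate is the admissibility transfer: an open compact subgroup $H_P$ of $P$ need not be open in $G$, so one cannot naively write $W^{H_P}$ as the $H_P$-invariants of something already known to be finite-dimensional over $k$. The fix is to choose, for a given $H_P$, an open compact subgroup $H \subset G$ such that $H \cap P \subseteq H_P$ and such that the inclusion $W^{H_P} \hookrightarrow V^{H \cap \text{(support)}}$ can be arranged — more precisely, to use that for the open-cell piece $V_0 \subset V$ of functions supported on $P g_0$, evaluation at $g_0$ identifies $V_0 \cong \Ind_{P \cap g_0 \bar{P} g_0^{-1}}^{P}\!(\cdots)$ as $P$-representations with compact induction, and that $V_0$ is a $P$-subrepresentation of $V|_P$ hence admissible; then $W$ is a $(P \cap g_0\bar P g_0^{-1})$... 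This is exactly Emerton's argument, and I would cite \cite{em3} for the precise bookkeeping rather than reproduce it, the essential inputs being: (i) smooth induction lands in locally constant functions valued in smooth vectors, (ii) restriction to closed subgroups preserves admissibility, (iii) the category of admissible $k$-representations is abelian (Proposition above), so subrepresentations and quotients of admissible representations are admissible.
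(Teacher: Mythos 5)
There is a genuine gap, and it sits exactly at the step your argument leans on hardest: the claim that $V|_P$ is admissible as a $P$-representation. Your parenthetical justification — that an open compact subgroup $H_P \subset P$ "is open compact in $G$, since $P$ is closed" — is false: $P$ is a proper closed subgroup of positive codimension, so it has empty interior in $G$, and no open compact subgroup of $P$ is open in $G$. Consequently restriction to a closed subgroup does \emph{not} preserve admissibility, and $V|_P$ is typically non-admissible (already for a principal series of $\GL_2(\mb{Q}_p)$ restricted to the Borel, the invariants under an open compact subgroup of $B$ are infinite-dimensional). In fact your intermediate claim would contradict Lemma 3.3 of this very paper: if $V|_P$ were admissible, the unipotent radical of $P$ would act trivially on $V$, which fails for any interesting parabolic induction. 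So the chain "$V|_P$ admissible $\Rightarrow$ subrepresentation admissible $\Rightarrow$ quotient $W$ admissible" collapses at its first link, and the abelian-category proposition cannot rescue it. A secondary issue: your smoothness argument only shows that every value $f(g)$ of every $f \in V$ lies in $W_{sm}$ (indeed $\Ind_P^G W = \Ind_P^G W_{sm}$), so smoothness of $W$ cannot be recovered from $V$ alone; it has to be part of the hypothesis, as it is in Emerton's formulation.

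The paper simply invokes Theorem 4.4.6 of \cite{em3}, and the honest elementary argument behind such a statement runs differently from yours: use the Iwasawa decomposition $G = PK_0$ with $K_0$ a maximal compact, so that $V|_{K_0} \simeq \Ind_{P \cap K_0}^{K_0} W$; for $H \subset K_0$ open, evaluation at a finite set of representatives of $(P\cap K_0)\backslash K_0 / H$ identifies $V^H$ with $\bigoplus_x W^{(P \cap K_0) \cap xHx^{-1}}$, so admissibility of $V$ forces $\dim_k W^{P \cap H} < \infty$ for every open compact $H \subset G$. Since the subgroups $P \cap H$ are cofinal among open compact subgroups of $P$ (any open compact $H_P \subset P$ contains $P \cap H$ for $H$ small enough), this gives admissibility of $W$. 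Your instinct to extract $W$ from $V$ by evaluation on a cell is in the right spirit, but the finiteness must come from invariants under open compacts of $G$ transported through such a Mackey-type decomposition, not from a purported admissibility of $V|_P$.
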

\begin{proof} This follows from Theorem 4.4.6 in \cite{em3}.
\end{proof}

\subsection{Cohomology and admissibility}

In \cite{em1}, Emerton has introduced the completed cohomology, which plays a crucial role in the p-adic Langlands program. The most important thing for us right now is the fact that those cohomology groups for modular curves are admissible as $\GL_2(\mb{Q}_p)$-representations. We have

\begin{prop} The $\GL_2(\mb{Q} _p)$-representation 
$$\widehat{H} ^1(X(N), \bar{\mb{F}}_p) = \varinjlim _m H^1  (X(Np^m) ^{an}, \bar{\mb{F}} _p)$$ 
is admissible.
\end{prop}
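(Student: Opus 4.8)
The plan is to reduce, by comparison theorems, to a purely topological statement about the tower of \emph{open} modular curves, and then to realise the completed cohomology as the cohomology of a short complex built out of a single admissible representation, so that the fact (recorded in the Proposition above) that admissible representations form an abelian category finishes the argument. Throughout I allow myself to replace $N$ by $N\ell$ for an auxiliary prime $\ell\neq p$: this is harmless, since by a transfer argument (the index $[\Gamma_1(N):\Gamma_1(N\ell)]$ being prime to $p$) $H^\bullet(X(Np^m))$ is a $\GL_2(\mb{Q}_p)$-equivariant direct summand of $H^\bullet(X(N\ell p^m))$, so I may and do assume $N\geq 3$, so that all the moduli problems occurring are rigid.

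First I pass to topological cohomology. Since $X(Np^m)^{an}$ is \emph{proper} and smooth, Berkovich's comparison theorem identifies its \'etale cohomology with that of the scheme $X(Np^m)$, which in turn agrees with the singular cohomology $H^i(X(Np^m)(\mb{C}),\bar{\mb{F}}_p)$ by invariance of \'etale cohomology with finite coefficients under extension of the algebraically closed base field together with Artin's comparison isomorphism; hence $\widehat{H}^1(X(N),\bar{\mb{F}}_p)=\varinjlim_m H^1(X(Np^m)(\mb{C}),\bar{\mb{F}}_p)$. Deleting the finite set of cusps, the Gysin sequence for the open inclusion $Y(Np^m)(\mb{C})\subset X(Np^m)(\mb{C})$ --- whose punctures contribute only in cohomological degree $\geq 2$ --- yields $\GL_2$-equivariant injections $H^1(X(Np^m)(\mb{C}),\bar{\mb{F}}_p)\hookrightarrow H^1(Y(Np^m)(\mb{C}),\bar{\mb{F}}_p)$, and therefore a $\GL_2(\mb{Q}_p)$-equivariant injection $\widehat{H}^1(X(N),\bar{\mb{F}}_p)\hookrightarrow\varinjlim_m H^1(Y(Np^m)(\mb{C}),\bar{\mb{F}}_p)=:\widehat{H}^1(Y(N),\bar{\mb{F}}_p)$. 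As a subrepresentation of an admissible representation is admissible, it suffices to prove that $\widehat{H}^1(Y(N),\bar{\mb{F}}_p)$ is admissible.

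For this I would use that $Y(N)(\mb{C})$, being a connected non-compact Riemann surface, is homotopy equivalent to a finite wedge of circles: it is a $K(\pi,1)$ with $\pi:=\pi_1(Y(N)(\mb{C}))$ free of finite rank $r$, and $\mb{Z}$ admits the finite free resolution $0\to\mb{Z}[\pi]^{\oplus r}\to\mb{Z}[\pi]\to\mb{Z}\to 0$ over $\mb{Z}[\pi]$. The curve $Y(Np^m)(\mb{C})$ is the finite covering attached to $\pi_m:=\ker(\pi\twoheadrightarrow\GL_2(\mb{Z}/p^m\mb{Z}))$ cut out by the monodromy on $p^m$-torsion, so Shapiro's lemma gives $\GL_2(\mb{Z}/p^m\mb{Z})$-equivariant isomorphisms $H^i(Y(Np^m)(\mb{C}),\bar{\mb{F}}_p)\simeq H^i(\pi,\mc{C}(\GL_2(\mb{Z}/p^m\mb{Z}),\bar{\mb{F}}_p))$, the finite group acting by translation. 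Passing to the colimit over $m$, which commutes with $H^i(\pi,-)$ because $\pi$ has a finite classifying complex, this becomes
$$\widehat{H}^i(Y(N),\bar{\mb{F}}_p)\simeq H^i\bigl(\pi,\ \mc{C}(\GL_2(\mb{Z}_p),\bar{\mb{F}}_p)\bigr),$$
where $\mc{C}(\GL_2(\mb{Z}_p),\bar{\mb{F}}_p)$ denotes the locally constant $\bar{\mb{F}}_p$-valued functions on $\GL_2(\mb{Z}_p)$, a smooth representation under one translation --- the one inducing the $\GL_2(\mb{Z}_p)$-action on $\widehat{H}^\bullet$ --- with $\pi$ acting through the homomorphism $\pi\to\GL_2(\mb{Z}_p)$ and the other.

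Invoking the finite resolution of $\mb{Z}$, $\widehat{H}^\bullet(Y(N),\bar{\mb{F}}_p)$ is the cohomology of the two-term complex $[\,\mc{C}(\GL_2(\mb{Z}_p),\bar{\mb{F}}_p)\to\mc{C}(\GL_2(\mb{Z}_p),\bar{\mb{F}}_p)^{\oplus r}\,]$, whose differential commutes with the residual $\GL_2(\mb{Z}_p)$-action; and $\mc{C}(\GL_2(\mb{Z}_p),\bar{\mb{F}}_p)$ is admissible over $\GL_2(\mb{Z}_p)$ because its invariants under $\ker(\GL_2(\mb{Z}_p)\to\GL_2(\mb{Z}/p^m\mb{Z}))$ are precisely the functions factoring through $\GL_2(\mb{Z}/p^m\mb{Z})$, hence finite-dimensional. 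Since the category of admissible smooth $\GL_2(\mb{Z}_p)$-representations over $\bar{\mb{F}}_p$ is abelian --- the $\GL_2(\mb{Z}_p)$-analogue of the Proposition above, valid because $\bar{\mb{F}}_p[[\GL_2(\mb{Z}_p)]]$ is Noetherian --- the subquotient $\widehat{H}^1(Y(N),\bar{\mb{F}}_p)$ of $\mc{C}(\GL_2(\mb{Z}_p),\bar{\mb{F}}_p)^{\oplus r}$ is admissible over $\GL_2(\mb{Z}_p)$, hence over $\GL_2(\mb{Q}_p)$ because the open compact subgroups of $\GL_2(\mb{Z}_p)$ are cofinal among those of $\GL_2(\mb{Q}_p)$; by the second step, so is $\widehat{H}^1(X(N),\bar{\mb{F}}_p)$ (its smoothness being clear, as every class is defined at a finite level). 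The one genuinely non-formal point is the opening comparison: replacing $H^1(X(Np^m)^{an},\bar{\mb{F}}_p)$ by topological cohomology is legitimate precisely \emph{because the compactified curves are proper}, exactly the feature that fails for the open Lubin--Tate pieces as recalled in the introduction; the remaining care is bookkeeping, namely checking that all the comparison and specialisation maps respect the $\GL_2(\mb{Q}_p)$-action (for admissibility only its restriction to $\GL_2(\mb{Z}_p)$ is actually needed). This result is due to Emerton \cite{em1}.
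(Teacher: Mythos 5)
The paper's proof is a one‑line citation of Theorem 2.1.5 of Emerton \cite{em1} (see also \cite{ce}); your argument is in substance a sketch of Emerton's own proof there, so the two approaches agree. One small imprecision worth correcting: the topological monodromy $\pi_1(Y(N)(\mb{C}))\to\GL_2(\mb{Z}/p^m\mb{Z})$ is not surjective --- its image lies in $\SL_2(\mb{Z}/p^m\mb{Z})$, since monodromy respects the Weil pairing and the cyclotomic character is trivial over $\mb{C}$, which is exactly why $Y(Np^m)(\mb{C})$ is disconnected --- but the Shapiro isomorphism as you wrote it, with coefficient module $\mc{C}(\GL_2(\mb{Z}/p^m\mb{Z}),\bar{\mb{F}}_p)$ viewed as a $\pi$-module through this (non‑surjective) map by left translation, still identifies $H^i(\pi,-)$ with the direct sum of the cohomologies of the components, so the rest of the argument is unaffected.
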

\begin{proof} This is Theorem 2.1.5 of \cite{em1} (see also Theorem 1.16 in \cite{ce}).
\end{proof}

By formal properties of the category of admissible representations, which form a Serre subcategory of the category of smooth representations (see Proposition 2.2.13 in \cite{em3}), the above result permits us to deduce admissibility for other cohomology groups which are of interest to us. Let us remark that we can define also the cohomology of the Lubin-Tate tower with compact support:
\begin{rema} A priori, cohomology with compact support is a covariant functor. But using the adjunction map
$$\Lambda \ra \pi _* \pi ^* \Lambda \simeq \pi _! \pi ^! \Lambda  $$
where $\Lambda$ is a constant sheaf and $\pi: X(Np^{m+1}) _{ss}  \ra  X(Np^{m}) _{ss}$ is finite (hence $\pi _* = \pi _!$) and \'etale (hence $\pi ^! = \pi ^*$) by the properties of Lubin-Tate tower, we get maps $H^i _c(X(Np^{m}) _{ss}, \Lambda) \ra H^i _c(X(Np^{m+1}) _{ss}, \Lambda)$ compatible with $H^i (X(Np^{m})^{an}, \Lambda) \ra H^i (X(Np^{m+1}) _{ss}, \Lambda)$.
\end{rema}

We start firstly by analysing cohomology groups which appear in the exact sequence for the cohomology with compact support. We have
\begin{prop} The $\GL_2(\mb{Q} _p)$-representation 
$$\widehat{H}^0 (X(N) _{ord}, \bar{\mb{F}}_p) = \varinjlim _m H^0 (X(Np^m) _{ord}, \bar{\mb{F}}_p)$$
is admissible.
\end{prop}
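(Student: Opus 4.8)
The plan is to exploit the fact that, for the constant sheaf $\bar{\mb{F}}_p$, the group $H^0(X(Np^m)_{ord},\bar{\mb{F}}_p)$ is simply the space of locally constant $\bar{\mb{F}}_p$-valued functions on the set $\pi_0(X(Np^m)_{ord})$ of connected components. Hence
$$\widehat{H}^0(X(N)_{ord},\bar{\mb{F}}_p)=\varinjlim_m H^0(X(Np^m)_{ord},\bar{\mb{F}}_p)=C^{\infty}(S,\bar{\mb{F}}_p),$$
where $S:=\varprojlim_m\pi_0(X(Np^m)_{ord})$ is a profinite set carrying a continuous smooth action of $\GL_2(\mb{Q}_p)$. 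For such a representation it is enough, for admissibility, to show that $S/H$ is finite for every compact open subgroup $H\subset\GL_2(\mb{Q}_p)$: then the $H$-orbits are clopen in $S$, so $C^{\infty}(S,\bar{\mb{F}}_p)^{H}=C(S/H,\bar{\mb{F}}_p)$ is finite-dimensional. So the whole point is to understand $S$ as a $\GL_2(\mb{Q}_p)$-set.

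Here I would invoke the decomposition of the ordinary locus from Section 2.2. By Igusa's irreducibility theorem (cf. \cite{col}) each tube $\mb{X}_{a,b}(Np^m)$ is connected, so $\pi_0(X(Np^m)_{ord})=\{(a,b):a\in(\mb{Z}/p^m\mb{Z})^{\times},\ b\in\mb{P}^1(\mb{Z}/p^m\mb{Z})\}$, with $g\in\GL_2(\mb{Z}/p^m\mb{Z})$ acting by $(a,b)\mapsto(a\cdot\det g,\ g^{-1}b)$, exactly as recorded in Section 2.2. Passing to the inverse limit and extending the action to $\GL_2(\mb{Q}_p)$ through the Hecke correspondences, one obtains $S\simeq\mb{Z}_p^{\times}\times\mb{P}^1(\mb{Q}_p)$, where $g$ acts by $(a,b)\mapsto(a\cdot u(g),\ gb)$: here $u(g)\in\mb{Z}_p^{\times}$ is the unit part of $\det g$ (so a uniformiser acts trivially on the first factor) and $\GL_2(\mb{Q}_p)$ acts on $\mb{P}^1(\mb{Q}_p)=\GL_2(\mb{Q}_p)/B$ in the usual way. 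Equivalently, in the notation of Section 2.2, $\widehat{H}^0(X(N)_{ord},\bar{\mb{F}}_p)\simeq\Ind^{\GL_2(\mb{Q}_p)}_{B}\,C^{\infty}(\mb{Z}_p^{\times},\bar{\mb{F}}_p)$ with $B$ acting through $\det$, and admissibility would then follow just as well from Lemma 3.4 used in the converse direction, together with the compactness of $\GL_2(\mb{Q}_p)/B$.

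Granting the description of $S$, the finiteness of $S/H$ is elementary. First, $\mb{P}^1(\mb{Q}_p)/H$ is finite — a standard fact (up to conjugacy $H$ contains a principal congruence subgroup $K_m$, and $K_m$ acts transitively on the fibres of $\mb{P}^1(\mb{Q}_p)=\mb{P}^1(\mb{Z}_p)\to\mb{P}^1(\mb{Z}/p^m\mb{Z})$). Fix a representative $b$ of each $H$-orbit $O\subset\mb{P}^1(\mb{Q}_p)$ and set $H_b=\mathrm{Stab}_H(b)=H\cap B_b$, with $B_b$ the Borel fixing $b$; then $H_b$ is a compact open subgroup of $B_b$, so $\det(H_b)$ is open in $\mb{Q}_p^{\times}$ and hence $u(\det(H_b))$ is an open, finite-index subgroup of $\mb{Z}_p^{\times}$. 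Since $H$ acts transitively on $O$ with stabiliser $H_b$, one has $(\mb{Z}_p^{\times}\times O)/H\simeq\mb{Z}_p^{\times}/u(\det(H_b))$, so $S/H=\coprod_{O}\mb{Z}_p^{\times}/u(\det(H_b))$ is finite, completing the proof.

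The main obstacle is the geometric input hidden in the description of $S$: the irreducibility of the Igusa curves $\mb{X}_{a,b}(Np^m)$ and, above all, the identification of the $\GL_2(\mb{Q}_p)$-action on $\pi_0$ of the ordinary tower — in particular that it factors through $\det$ and that a uniformiser acts trivially on the component set that occurs. Once these facts about modular curves are granted, as Section 2.2 essentially does, the rest is a formal manipulation of smooth induction and of invariants, so the proof will be short.
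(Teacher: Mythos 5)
Your reduction of admissibility to the finiteness of $S/H$ for every compact open $H\subset\GL_2(\mb{Q}_p)$, where $S=\varprojlim_m\pi_0(X(Np^m)_{ord})$, is correct, and it is in fact the cleanest way to see why the paper's own proof works; but after that point the two arguments diverge. The paper's proof is a one-liner: at each finite level $H^0(X(Np^m)_{ord},\bar{\mb{F}}_p)=(\bar{\mb{F}}_p)^{d(Np^m)}$ with $d(Np^m)$ the (finite) number of connected components, and admissibility of the colimit follows. In your language, what makes this suffice is not any identification of $S$ but the observation that $X(Np^{m'})_{ord}\to X(Np^m)_{ord}$ is a quotient by $K_m/K_{m'}$, so this group permutes transitively the components lying over a fixed connected component; hence $S/K_m=\pi_0(X(Np^m)_{ord})$ is finite, and since the $K_m$ are cofinal among compact open subgroups, $S/H$ is finite for all $H$. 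Your subsequent group-theoretic computation (finiteness of $\mb{P}^1(\mb{Z}_p)/H$, openness of $\det(H_b)$, the count $\mb{Z}_p^{\times}/u(\det H_b)$ per orbit) is fine as far as it goes, and the aside about a uniformiser acting trivially on the first factor is not even needed, since only the action of compact open subgroups enters.

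The genuine soft spot is the one you flag yourself: your argument needs the exact description of $S$, hence the connectedness of every tube $\mb{X}_{a,b}(Np^m)$, and the paper explicitly declines to establish this (end of Section 2.2: it does not determine whether the $\mb{X}_{a,b}(Np^m)$ are precisely the connected components of $X(Np^m)_{ord}$). Igusa's theorem gives irreducibility of the curves $C_{a,b}(Np^m)$ in characteristic $p$; you still need the separate rigid-analytic statement that the tube over such an irreducible ordinary component is connected, which is plausible (cf.\ \cite{col}) but not available for free here. This is not a cosmetic point for your route: if the tubes split further, $S$ could have infinite fibres over $\mb{Z}_p^{\times}\times\mb{P}^1(\mb{Z}_p)$ and your orbit count no longer bounds $S/H$. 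Either supply that connectedness, or replace it by the transitivity argument above, which yields $S/K_m=\pi_0(X(Np^m)_{ord})$ without knowing what the components are; with that substitution your proof closes, though it remains considerably heavier than the paper's finite-level argument.
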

\begin{proof}
The number of connected components of $X(Np^m)_{ord}$ is finite and let $d(Np^m)$ be their number. For $s>0$, we have 
$$H^0(X(Np^m)_{ord}, \bar{\mb{F}}_p) = (\bar{\mb{F}}_p)^{d(Np^m)}$$
hence $\varinjlim _m H^0(X(Np^m)_{ord}, \bar{\mb{F}}_p)$ is admissible.
\end{proof}

We deduce
\begin{prop} The $\GL_2(\mb{Q} _p)$-representation 
$$\widehat{H} ^1_c(X(N)_{ss}, \bar{\mb{F}}_p) = \varinjlim _m H^1 _{c} (X(Np^m) _{ss}, \bar{\mb{F}} _p)$$
is admissible.
\end{prop}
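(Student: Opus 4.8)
The plan is to obtain admissibility from the long exact sequence of Section~2.1, since the two cohomology groups adjacent to $\widehat{H}^1_c(X(N)_{ss}, \bar{\mb{F}}_p)$ in it have already been shown to be admissible.

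First I would work at a finite level $m$. The short exact sequence of \'etale sheaves $0 \ra j_! j^*\bar{\mb{F}}_p \ra \bar{\mb{F}}_p \ra i_* i^*\bar{\mb{F}}_p \ra 0$ on $X(Np^m)^{an}$ gives the long exact sequence
$$ \cdots \ra H^0(X(Np^m)_{ord}, \bar{\mb{F}}_p) \ra H^1_c(X(Np^m)_{ss}, \bar{\mb{F}}_p) \ra H^1(X(Np^m)^{an}, \bar{\mb{F}}_p) \ra \cdots , $$
equivariant for $\GL_2(\mb{Z}/p^m\mb{Z})$. I would then verify that these sequences form a direct system in $m$: on the terms $H^0(\,\cdot\,_{ord})$ and $H^1(X(Np^m)^{an})$ the transition maps are the pullbacks along the finite \'etale maps of the tower, and on $H^1_c(\,\cdot\,_{ss})$ they are the maps of Remark~3.6, whose construction via the adjunction $\bar{\mb{F}}_p \ra \pi_*\pi^*\bar{\mb{F}}_p \simeq \pi_!\pi^!\bar{\mb{F}}_p$ is compatible with the pullback maps on the remaining terms. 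Since a filtered colimit of $\bar{\mb{F}}_p$-modules is exact, passing to $\varinjlim_m$ yields an exact sequence of smooth $\GL_2(\mb{Q}_p)$-representations
$$ \widehat{H}^0(X(N)_{ord}, \bar{\mb{F}}_p) \xrightarrow{\ f\ } \widehat{H}^1_c(X(N)_{ss}, \bar{\mb{F}}_p) \xrightarrow{\ g\ } \widehat{H}^1(X(N), \bar{\mb{F}}_p) ; $$
the middle term is smooth because any of its classes is defined at some level $m$ and hence fixed by the kernel of $\GL_2(\mb{Z}_p) \ra \GL_2(\mb{Z}/p^m\mb{Z})$, which is an open subgroup of $\GL_2(\mb{Q}_p)$.

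Next I would put $A = \Ima(f) = \Ker(g)$ and $B = \Ima(g)$, obtaining a short exact sequence of smooth $\GL_2(\mb{Q}_p)$-representations $0 \ra A \ra \widehat{H}^1_c(X(N)_{ss}, \bar{\mb{F}}_p) \ra B \ra 0$. The representation $A$ is a quotient of $\widehat{H}^0(X(N)_{ord}, \bar{\mb{F}}_p)$, which is admissible by the preceding proposition, and $B$ is a subrepresentation of $\widehat{H}^1(X(N), \bar{\mb{F}}_p)$, which is admissible by Emerton's theorem. Admissible representations form a Serre subcategory of the category of smooth $\GL_2(\mb{Q}_p)$-representations (Proposition~2.2.13 of \cite{em3}); hence $A$ and $B$ are admissible, and, since an extension of an admissible representation by an admissible representation is admissible, so is $\widehat{H}^1_c(X(N)_{ss}, \bar{\mb{F}}_p)$.

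The formal part of this argument is routine; the step I expect to require the most care is checking that the long exact sequences really do assemble into a direct system of complexes --- that is, that the transfer-type transition maps of Remark~3.6 on $H^1_c(\,\cdot\,_{ss})$ commute with the connecting and restriction maps of the sequence --- and that the resulting colimit sequence is $\GL_2(\mb{Q}_p)$-equivariant. Once this bookkeeping is settled, the admissibility follows immediately from the Serre-subcategory property.
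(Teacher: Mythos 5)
Your proposal is correct and takes essentially the same route as the paper: pass to the colimit of the long exact sequence from Section~2.1, then use admissibility of $\widehat{H}^0(X(N)_{ord},\bar{\mb{F}}_p)$ and $\widehat{H}^1(X(N),\bar{\mb{F}}_p)$ together with the Serre-subcategory property of admissible representations. The paper states this very briefly; your version fills in the verifications (compatibility of the transition maps from Remark~3.6, smoothness of the middle term, and the extension argument via $A$ and $B$) that the paper leaves implicit, but there is no difference in the underlying idea.
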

\begin{proof}
We consider the exact sequence from 2.1:
$$ ... \ra \widehat{H}^0 (X(N) _{ord}, \bar{\mb{F}}_p) \ra \widehat{H}^1 _{c} (X(N) _{ss}, \bar{\mb{F}}_p) \ra \widehat{H}^1  (X(N) ^{an}, \bar{\mb{F}}_p) \ra \widehat{H}^1  (X(N) _{ord}, \bar{\mb{F}}_p) \ra ...$$
and we conclude using the fact that admissible representations form a Serre subcategory of smooth representations and the propositions proved above.
\end{proof}
We remark that the cohomology with compact support of the Lubin-Tate tower is much easier to work with than the cohomology without the support. This is because the latter will turn out to be non-admissible.

We finish this section with the following proposition
\begin{prop} The $\GL_2(\mb{Q}_p)$-representation 
$$\widehat{H} ^1 _{X_{ord}} (X(N), \bar{\mb{F}}_p) = \varinjlim _m H^1 _{X_{ord}} (X(Np^m) ^{an}, \bar{\mb{F}} _p)$$
is admissible.
\end{prop}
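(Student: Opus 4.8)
The plan is to deduce the admissibility of $\widehat{H}^1_{X_{ord}}(X(N), \bar{\mb{F}}_p)$ from the second long exact sequence of Section 2.1, exactly as was done for $\widehat{H}^1_c(X(N)_{ss}, \bar{\mb{F}}_p)$ using the first one. Passing to the limit over $m$ (which is exact, as a filtered colimit) in the sequence
$$... \ra H^0(X(Np^m)_{ss}, \bar{\mb{F}}_p) \ra H^1_{X_{ord}}(X(Np^m)^{an}, \bar{\mb{F}}_p) \ra H^1(X(Np^m)^{an}, \bar{\mb{F}}_p) \ra ...$$
gives an exact sequence
$$... \ra \widehat{H}^0(X(N)_{ss}, \bar{\mb{F}}_p) \ra \widehat{H}^1_{X_{ord}}(X(N), \bar{\mb{F}}_p) \ra \widehat{H}^1(X(N)^{an}, \bar{\mb{F}}_p) \ra ...$$
Since admissible representations form a Serre subcategory of the category of smooth $\GL_2(\mb{Q}_p)$-representations (Proposition 2.2.13 of \cite{em3}), it suffices to know that the two flanking terms $\widehat{H}^0(X(N)_{ss}, \bar{\mb{F}}_p)$ and $\widehat{H}^1(X(N)^{an}, \bar{\mb{F}}_p)$ are admissible. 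The second is admissible by Proposition 3.7 (Emerton's theorem). So the only thing left to check is the admissibility of $\widehat{H}^0(X(N)_{ss}, \bar{\mb{F}}_p)$.

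For this last point I would argue as in the proof of Proposition 3.9: $X(Np^m)_{ss}$ is a disjoint union of finitely many copies of Lubin-Tate spaces, indexed by the finite set $\overline{X(Np^m)_{ss}}$ of supersingular points of the special fiber; each copy is connected (the Lubin-Tate space at finite level is connected, or at worst has finitely many components), so $H^0(X(Np^m)_{ss}, \bar{\mb{F}}_p) = (\bar{\mb{F}}_p)^{e(Np^m)}$ for some finite integer $e(Np^m)$. A finite-dimensional smooth representation is automatically admissible, and a filtered colimit of admissible representations whose transition maps realise the system as a colimit of the $\GL_2(\mb{Z}_p)$-fixed vectors being finite-dimensional at each level remains admissible; concretely, $\left(\widehat{H}^0(X(N)_{ss}, \bar{\mb{F}}_p)\right)^{H}$ for $H$ open compact is a colimit over $m$ of subspaces of $H^0(X(Np^m)_{ss}, \bar{\mb{F}}_p)$, and since the action of $\GL_2(\mb{Z}_p)$ permutes the (finitely many) supersingular points with stabilisers of finite index, one gets finite-dimensionality of the $H$-invariants. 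Hence $\widehat{H}^0(X(N)_{ss}, \bar{\mb{F}}_p)$ is admissible.

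Alternatively — and perhaps more cleanly — one can avoid analysing $\widehat{H}^0(X(N)_{ss}, \bar{\mb{F}}_p)$ directly by using the first exact sequence together with the already-established admissibility results: $H^1_{X_{ord}}$ and $H^1_c$ of the supersingular locus are related through $H^0$ and $H^1$ of the ordinary and supersingular loci, and one deduces $\widehat{H}^0(X(N)_{ss})$ is admissible from the sequence $0 \ra \widehat{H}^0(X(N)^{an}) \ra \widehat{H}^0(X(N)_{ss}) \oplus \widehat{H}^0(X(N)_{ord}) \ra \dots$ sitting in the Mayer-Vietoris sequence of the covering $X(Np^m)^{an} = X(Np^m)_{ss} \cup X(Np^m)_{ord}$, whose first two terms are admissible by Propositions 3.7 and 3.9. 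Either way, the structural input (Serre subcategory, exactness of filtered colimits) is identical and the argument is short.

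The main obstacle, such as it is, is purely bookkeeping: making sure that the relevant long exact sequence (the one with supports $X_{ord}$, or the Mayer-Vietoris sequence) is genuinely $\GL_2(\mb{Q}_p)$-equivariant and commutes with the transition maps in $m$, so that passing to the colimit yields an exact sequence of smooth $\GL_2(\mb{Q}_p)$-representations. This is a standard compatibility of the six-operations formalism of \cite{ber4} with the tower structure (the same compatibility implicitly used for Propositions 3.9 and 3.10), so no new difficulty arises; the proof is essentially a one-line deduction once the pattern of the earlier propositions is in place.
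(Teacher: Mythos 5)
Your proof follows the paper's argument exactly: both use the long exact sequence with supports in the ordinary locus
$$H^0(X(Np^m)_{ss}, \bar{\mb{F}}_p) \ra H^1_{X_{ord}}(X(Np^m)^{an}, \bar{\mb{F}}_p) \ra H^1(X(Np^m)^{an}, \bar{\mb{F}}_p),$$
the admissibility of the completed $H^1$ of the modular curve tower, and the fact that admissible representations form a Serre subcategory. The one thing you do that the paper does not is spell out the admissibility of $\widehat{H}^0(X(N)_{ss}, \bar{\mb{F}}_p)$ (the paper treats it as evident, citing only the $H^1$ term); your observation that $X(Np^m)_{ss}$ has finitely many connected components, so that $H^0$ is finite-dimensional at each level, is precisely the analogue of the paper's argument for $\widehat{H}^0(X(N)_{ord}, \bar{\mb{F}}_p)$ and closes that small gap cleanly.
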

\begin{proof} This follows from the exact sequence (we use the notations from the previous section)
$$ H^0 (X(Np^m)_{ss}, \bar{\mb{F}}_p) \ra H^1 _{X_{ord}} (X(Np^m)^{an}, \bar{\mb{F}}_p) \ra H^1  (X(Np^m) ^{an}, \bar{\mb{F}}_p)$$
and Proposition 3.5.
\end{proof}

\section{Supersingular representations}

In this section we recall results on the structure of admissible representations and we apply them to the exact sequence of cohomology groups that we have introduced before, getting the first comparison between the cohomology of the Lubin-Tate tower and the cohomology of the tower of modular curves. We will start with a reminder on the mod p local Langlands correspondence. A reader should consult \cite{be2} for references to proofs of cited facts.

\subsection{Mod p local Langlands correspondence} Let $\omega _n$ be the fundamental character of Serre of level $n$ which is defined on inertia group $I$ via $\sigma \mapsto \frac{\sigma (p^{1/n})}{p^{1/n}}$. Let $\omega$ be the mod p cyclotomic character. For $h\in \mb{N}$, we write $\Ind \ \omega _n ^h$ for the unique semisimple $\bar{\mb{F}}_p$-representation of $G_{\mb{Q}_p}$ which has determinant $\omega ^h$ and whose restriction to $I$ is isomorphic to $\omega _{n} ^h \oplus \omega _n ^{ph} \oplus ... \oplus \omega _n ^{p^{n-1}h}$. If $\chi: G_{\mb{Q}_p} \ra k ^{\times}$ is a character, we will denote by $\rho(r,\chi)$ the representation $\Ind (\omega _2 ^{r+1}) \otimes \chi$ which is absolutely irreducible if $r \in \{0,...,p-1\}$. In fact, any absolutely irreducible representation of $G_{\mb{Q}_p}$ of dimension 2 is isomorphic to some $\rho (r,\chi)$ for $r \in \{0,...,p-1\}$. We remark that $\Ind \ \omega _2 ^{r+1}$ is not isomorphic to the induced representation $\Ind _{G_{\mb{Q}_{p^2}}} ^{G_{\mb{Q}_p}} \omega _2 ^{r+1}$, because of the condition which we put on the determinant. In fact, computing the determinant of $\Ind _{G_{\mb{Q}_{p^2}}} ^{G_{\mb{Q}_p}} \omega _2 ^{r+1}$, one sees that  
$$\Ind \ \omega _2 ^{r+1} = \Ind _{G_{\mb{Q}_{p^2}}} ^{G_{\mb{Q}_p}} (\omega _2 ^{r+1} \cdot \sgn )$$
where $\sgn$ is the $\bar{\mb{F}}_p$-character of $G_{\mb{Q}_{p^2}}$ which factors through $\mb{F} _{p^2}^{\times} \times \mb{Z}$ and which is trivial on $\mb{F}_{p^2} ^{\times}$ and takes the Frobenius of $G_{\mb{Q}_{p^2}}$ to $-1$ in $\mb{Z}$ (we have to make a choice of a uniformiser to have the map $G_{\mb{Q}_{p^2}} \twoheadrightarrow \mb{F} _{p^2}^{\times} \times \mb{Z}$).

On the $\GL_2$-side, one considers representations $\Sym ^r k^2 $ inflated to $\GL_2(\mb{Z}_p)$ and then extended to $\GL_2(\mb{Z}_p)\mb{Q}_p ^{\times}$ by making $p$ acts by identity. We then consider the induced representation 
$$\Ind _{\GL_2(\mb{Z}_p)\mb{Q}_p ^{\times}} ^{\GL_2(\mb{Q}_p)} \Sym ^r k^2 $$
One can show that the endomorphism ring (a Hecke algebra) $\End _{k[\GL_2(\mb{Q}_p)]}( \Ind _{\GL_2(\mb{Z}_p)\mb{Q}_p ^{\times}} ^{\GL_2(\mb{Q}_p)} \Sym ^r k^2 )$ is isomorphic to $k[T]$, where $T$ corresponds to the double class $\GL_2(\mb{Z}_p)\mb{Q}_p ^{\times} ( \begin{smallmatrix} p & 1 \\ 0 & 1 \end{smallmatrix} ) \GL_2(\mb{Z}_p) \mb{Q}_p ^{\times}$. For a character $\chi: G_{\mb{Q}_p} \ra k ^{\times}$ and $\lambda \in k$. we introduce representations:
$$\pi(r, \lambda, \chi) = \frac{\Ind _{\GL_2(\mb{Z}_p)\mb{Q}_p ^{\times}} ^{\GL_2(\mb{Q}_p)} \Sym ^r k^2 }{T-\lambda} \otimes (\chi \circ \det)$$ 
For $r \in \{0,...,p-1\}$ such that $(r,\lambda) \not \in \{(0, \pm 1), (p-1, \pm 1)\}$, the representation $\pi(r, \lambda, \chi)$ is irreducible. If $\lambda = \pm 1$, then $\pi(r, \lambda, \chi)$ appears as either a subrepresentation or a subquotient of the special representation $\Sp$. One proves that $\chi \circ \det$, $\Sp \otimes (\chi \circ \det)$ and $\pi(r, \lambda, \chi)$ for $r \in \{0,...,p-1\}$ and $(r,\lambda) \not \in \{(0, \pm 1), (p-1, \pm 1)\}$ are all the smooth irreducible representations of $\GL_2(\mb{Q}_p)$. 

This explicit description gives a mod p correspondence by associating $\rho(r,\chi)$ to $\pi (r, 0 , \chi)$.

\subsection{Supersingular representations}

Let us a fix a supersingular representation $\pi$ of $\GL_2(\mb{Q} _p)$ on a $\bar{\mb{F}} _p$-vector space with a central character $\xi$.  Recall the following result of Paskunas:
\begin{prop} Let $\tau$ be an irreducible smooth representation of $\GL_2(\mb{Q} _p)$ admitting a central character. If $\Ext^1 _{\GL_2(\mb{Q}_p)}(\pi, \tau) \not = 0$ then $\tau \simeq \pi$.
\end{prop}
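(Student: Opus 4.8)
The plan is to reduce the statement to Paskūnas's classification of extensions between irreducible representations of $\GL_2(\mb{Q}_p)$ via the theory of blocks. First I would recall that, by work of Paskūnas, the category of smooth $\bar{\mb{F}}_p$-representations of $\GL_2(\mb{Q}_p)$ with a fixed central character decomposes into blocks, and that a supersingular representation $\pi$ forms a block by itself: the only irreducible representation $\tau$ (with central character) lying in the same block as $\pi$ is $\pi$ itself. Since a nonzero $\Ext^1_{\GL_2(\mb{Q}_p)}(\pi,\tau)$ forces $\pi$ and $\tau$ to lie in the same block, this immediately gives $\tau \simeq \pi$. Concretely, one would cite the block decomposition from Paskūnas's memoir (and the supplement with Colmez–Dospinescu, or Paskūnas's paper on the image of Colmez's functor) where the supersingular blocks are shown to be singletons.

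Alternatively, and more self-containedly, I would argue by running through the list of irreducible smooth representations of $\GL_2(\mb{Q}_p)$ recalled in Section 4.1 — namely the characters $\chi\circ\det$, the twists of the special representation $\Sp\otimes(\chi\circ\det)$, the irreducible principal series, and the supersingular representations $\pi(r,0,\chi)$ — and for each one invoke the computation of $\Ext^1$ groups due to Paskūnas (and Breuil–Paskūnas). The key input is that for a supersingular $\pi$ one has $\Ext^1_{\GL_2(\mb{Q}_p)}(\pi,\tau)=0$ whenever $\tau$ is irreducible with a central character and $\tau\not\simeq\pi$; in particular there are no nonsplit self-extensions issues to worry about here since we only need the case $\tau\not\simeq\pi$. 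One would also need to match central characters: if $\Ext^1(\pi,\tau)\neq 0$ then $\tau$ must have the same central character $\xi$ as $\pi$, which already cuts down the list.

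The main obstacle — or rather the main point where one is simply quoting deep input rather than proving anything — is the vanishing of $\Ext^1$ between a supersingular representation and the non-supersingular irreducibles (principal series, special, characters). This is genuinely the hard part and rests on Paskūnas's explicit construction of injective envelopes in the relevant category and his analysis of Colmez's Montréal functor; there is no elementary argument, so the proof will consist of correctly citing \cite{em3}-style formalism together with Paskūnas's results. I would therefore phrase the proof as: reduce to fixed central character $\xi$; recall the classification of irreducibles with central character $\xi$; invoke Paskūnas's Ext-computations (equivalently, the block decomposition) to conclude that the only possibility with $\Ext^1_{\GL_2(\mb{Q}_p)}(\pi,\tau)\neq 0$ is $\tau\simeq\pi$.
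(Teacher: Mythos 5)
Your proposal is correct and matches the paper, which simply cites Paskūnas's Ext computations (\cite{pa2}, and \cite{pa3} for $p=2$) -- exactly the input you identify as the real content. One small remark: the paper deduces the ``supersingular blocks are singletons'' statement \emph{from} this proposition, so your first route (arguing via the block decomposition) reverses that logical order, but since Paskūnas establishes the block structure precisely via these Ext computations, this is only a matter of presentation, and your second, direct citation of the Ext vanishing is precisely the paper's argument.
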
 
\begin{proof} See \cite{pa2} and \cite{pa3} for the case $p=2$.
\end{proof}
This result permits us to conclude that the $\GL_2(\mb{Q}_p)$-block of any supersingular representation consists of one element - the supersingular representation itself. Here, by a $\GL_2(\mb{Q}_p)$-block we mean an equivalence class for a relation defined as follows. We write $\pi \sim \tau$ if there exists a sequence of irreducible smooth admissible $\bar{\mb{F}}_p$-representations of $\GL_2(\mb{Q}_p)$: $\pi_0 = \pi, \pi _1,..., \pi _n = \tau$ such that for each $i$ one of the following conditions holds: 

1) $\pi _i \simeq \pi _{i+1}$,  

2) $\Ext ^1 _{\GL_2 (\mb{Q}_p)} (\pi _i, \pi _{i+1}) \not = 0$,  

3) $\Ext ^1 _{\GL_2 (\mb{Q}_p)} (\pi _{i+1}, \pi _{i}) \not = 0$.

One can find a description of all $\GL_2(\mb{Q}_p)$-blocks in \cite{pa3} or \cite{pa1}. The general result of Gabriel on the block decomposition of locally finite categories gives:
\begin{prop} We have a decomposition:
$$\Rep ^{adm} _{\GL_2(\mb{Q} _p),\xi}(\bar{\mb{F}}_p) = \Rep ^{adm} _{\GL_2(\mb{Q} _p),\xi}(\bar{\mb{F}}_p) _{(\pi)} \oplus \Rep ^{adm} _{\GL_2(\mb{Q} _p),\xi}(\bar{\mb{F}}_p) ^{(\pi)}$$
where $\Rep ^{adm} _{\GL_2(\mb{Q} _p),\xi}(\bar{\mb{F}}_p)$ is the (abelian) category of smooth admissible $\bar{\mb{F}}_p$-representations admitting a central character $\xi$, $\Rep ^{adm} _{\GL_2(\mb{Q} _p),\xi}(\bar{\mb{F}}_p) _{(\pi)}$ (resp. $\Rep ^{adm} _{\GL_2(\mb{Q} _p),\xi}(\bar{\mb{F}}_p) ^{(\pi)}$) is the subcategory of it consisting of representations $\Pi$ whose all the irreducible subquotients are (resp. are not) isomorphic to $\pi$. 
\end{prop}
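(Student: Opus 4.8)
The plan is to deduce this block decomposition from the general theory of locally finite abelian categories together with Proposition 4.2. First I would recall the formalism: the category $\Rep^{adm}_{\GL_2(\mb{Q}_p),\xi}(\bar{\mb{F}}_p)$ is abelian (by Proposition 3.2, restricted to a fixed central character), and moreover it is locally finite in the sense that every object is the union of its finite-length subobjects — this is because an admissible representation with a central character is, by the anti-equivalence of Proposition 3.2, a finitely generated module over a Noetherian completed group ring, and such modules are locally finite. Gabriel's theorem on locally finite categories then gives a canonical decomposition of the category into a product of indecomposable subcategories (``blocks''), indexed by the equivalence classes of irreducible objects under the relation generated by ``having a non-split extension in one direction or the other,'' which is exactly the relation $\sim$ spelled out in the text.

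Next I would identify the block of $\pi$. By Proposition 4.2, if $\tau$ is irreducible with a central character and $\Ext^1_{\GL_2(\mb{Q}_p)}(\pi,\tau)\neq 0$ then $\tau\simeq\pi$; applying this also with the roles reversed (which is legitimate since $\Ext^1(\tau,\pi)\neq 0$ forces, by the same statement with $\pi$ in the second slot — here one uses that $\pi$ is itself irreducible supersingular, and the symmetry of the situation, or re-invokes Paskunas' result which is stated for the pair), we conclude that no irreducible $\tau\not\simeq\pi$ can be linked to $\pi$ by a single step of the relation $\sim$. Hence the equivalence class of $\pi$ is the singleton $\{\pi\}$, and the corresponding block $\Rep^{adm}_{\GL_2(\mb{Q}_p),\xi}(\bar{\mb{F}}_p)_{(\pi)}$ consists precisely of those admissible representations all of whose irreducible subquotients are isomorphic to $\pi$. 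The complementary factor $\Rep^{adm}_{\GL_2(\mb{Q}_p),\xi}(\bar{\mb{F}}_p)^{(\pi)}$ is the product of all the other blocks, i.e. the full subcategory of objects none of whose subquotients is $\pi$; that an object decomposes as a direct sum of a piece in each factor is the content of the Gabriel decomposition, and the fact that a representation with no subquotient equal to $\pi$ has zero component in the $\pi$-block (and conversely) pins down the two summands as described.

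The main obstacle, and the point deserving the most care, is justifying that Gabriel's block decomposition applies — that is, verifying local finiteness of $\Rep^{adm}_{\GL_2(\mb{Q}_p),\xi}(\bar{\mb{F}}_p)$ and that $\Ext^1$ computed in this category agrees with $\Ext^1$ in the category of all smooth representations (so that Proposition 4.2, which is about $\Ext^1_{\GL_2(\mb{Q}_p)}$, can be fed into the block-theoretic machine). For the latter one notes that admissible representations form a Serre subcategory of smooth representations, so $\Ext^1$ in the subcategory injects into (in fact, for length-one objects, coincides with) the ambient $\Ext^1$; restricting to a fixed central character changes nothing relevant since $\pi$ and $\tau$ already have the central character $\xi$. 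For local finiteness one invokes the anti-equivalence with finitely generated modules over a Noetherian ring from Proposition 3.2, under which finite length corresponds to finite length and arbitrary objects are filtered colimits of their finitely generated — hence finite length, by Noetherianity and the fact that the residue ``field'' pieces are the irreducibles — subobjects. Once these two points are in place, the statement is a formal consequence, so I would present the proof as: (i) the category is locally finite abelian; (ii) Gabriel $\Rightarrow$ block decomposition indexed by $\sim$-classes; (iii) Proposition 4.2 $\Rightarrow$ the $\sim$-class of $\pi$ is $\{\pi\}$; (iv) read off the two summands.
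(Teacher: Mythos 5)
Your proposal is in essence the argument the paper has in mind: the sentence preceding the statement ("The general result of Gabriel on the block decomposition of locally finite categories gives:") signals the Gabriel route, and the proof itself simply cites Proposition 5.32 of Paskunas. You have correctly identified the two ingredients — Gabriel's block theory plus Proposition 4.2 to show the $\sim$-class of $\pi$ is the singleton $\{\pi\}$ — and your step (iii) is exactly right: Proposition 4.2 is symmetric enough to kill both $\Ext^1(\pi,\tau)$ and $\Ext^1(\tau,\pi)$ for $\tau\not\simeq\pi$, and passing between the admissible category and the ambient smooth category does not change these $\Ext^1$ groups since admissible representations form a Serre subcategory.

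The weak point is your justification of local finiteness. You assert that finitely generated modules over a Noetherian completed group ring ``are locally finite'' and that finitely generated subobjects are of finite length ``by Noetherianity.'' That inference is false in general: Noetherian controls ascending chains, not descending ones, and a finitely generated module over $\bar{\mb{F}}_p[[G_0]]$ certainly need not be of finite length (it need not even be finite-dimensional). The fact that every finitely generated admissible $\bar{\mb{F}}_p$-representation of $\GL_2(\mb{Q}_p)$ with fixed central character has finite length — equivalently, that the category in question is locally finite in the sense needed for Gabriel's theorem — is a genuine theorem specific to $\GL_2(\mb{Q}_p)$; it relies on the classification of irreducible smooth mod $p$ representations and on Paskunas' structural results, and is precisely the content wrapped up in the citation to \cite{pa1}. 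So your outline is the correct one and matches the paper, but the paragraph you flag as ``the point deserving the most care'' is indeed the gap: the local finiteness cannot be derived from Noetherianity alone, and should instead be imported from Paskunas (as the paper does).
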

\begin{proof} See Proposition 5.32 in \cite{pa1}.
\end{proof}
This result permit us to consider the localisation functor with respect to $\pi$
$$V \mapsto V_{(\pi)}$$
on the category of admissible representations such that all irreducible subquotients of $V _{(\pi)}$ are isomorphic to the fixed $\pi$.

\begin{rema} We note that the condition on the existence of central characters is not important. Central characters always exist by the work of Berger (\cite{be}) in the mod p case.
\end{rema}  

\subsection{Cohomology with compact support}
We apply the localisation functor to the three admissible terms in the exact sequence obtained from 2.1:
$$... \ra \widehat{H}^0 (X(N) _{ord}, \bar{\mb{F}}_p) \ra \widehat{H}^1 _{c} (X(N) _{ss}, \bar{\mb{F}}_p) \ra \widehat{H}^1  (X(N) ^{an}, \bar{\mb{F}}_p) \ra \widehat{H}^1  (X(N) _{ord}, \bar{\mb{F}}_p) \ra ...$$
getting the exact sequence
$$  \widehat{H}^0 (X(N) _{ord}, \bar{\mb{F}}_p) _{(\pi)} \ra \widehat{H}^1 _{c} (X(N) _{ss}, \bar{\mb{F}}_p) _{(\pi)} \ra \widehat{H}^1  (X(N) ^{an}, \bar{\mb{F}}_p) _{(\pi)} $$
For $a \in \mb{Z}_p ^{\times}$ let us define in the light of 2.2
$$\widehat{H} ^1 (\mb{X} _{a, \infty}(N), \bar{\mb{F}}_p) = \varinjlim _m H^1 (\mb{X} _{a, \infty}(Np^m), \bar{\mb{F}}_p)$$
Recall now, that after 2.2, $\widehat{H}^0 (X(N) _{ord}, \bar{\mb{F}}_p)$ is an admissible representation isomorphic to the induced representation 
$$\Ind ^{\GL_2(\mb{Q}_p)} _{B_{\infty}(\mb{Q}_p)} \left( \bigoplus _{a} \widehat{H}^0  (\mb{X} _{a,\infty}(N), \bar{\mb{F}}_p) \right) $$
where $B_{\infty}(\mb{Q}_p)$ is the Borel subgroup of upper triangular matrices in $\GL _2(\mb{Q}_p)$ and $a$ goes over $\mb{Z}_p ^{\times}$. On this representation unipotent group acts trivially by lemma 3.3 (which we can use thanks to lemma 3.4) and hence we see that it is induced from the tensor product of characters. This means that after localisation at $\pi$ this representation vanishes
$$\Ind ^{\GL_2(\mb{Q}_p)} _{B_{\infty}(\mb{Q}_p)} \left( \bigoplus _{a} \widehat{H}^0  (\mb{X} _{a,\infty}(N), \bar{\mb{F}}_p) \right) _{(\pi)} = 0$$
and we arrive at
\begin{theo} We have an injection of representations
$$\widehat{H}^1 _{c} (X(N) _{ss}, \bar{\mb{F}}_p)_{(\pi)} \hookrightarrow \widehat{H}^1  (X(N), \bar{\mb{F}}_p)_{(\pi)}$$
\end{theo}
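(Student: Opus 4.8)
The plan is to run the localisation functor $V \mapsto V_{(\pi)}$ through the long exact sequence of Subsection 2.1 and extract the injection from the vanishing of its $\widehat{H}^0$-term. First I would note that, by Propositions 3.5, 3.7 and 3.8, the three groups $\widehat{H}^0(X(N)_{ord}, \bar{\mb{F}}_p)$, $\widehat{H}^1_c(X(N)_{ss}, \bar{\mb{F}}_p)$ and $\widehat{H}^1(X(N), \bar{\mb{F}}_p)$ are admissible $\GL_2(\mb{Q}_p)$-representations; using Remark 4.3 to split them according to central characters, they lie in $\Rep^{adm}_{\GL_2(\mb{Q}_p),\xi}(\bar{\mb{F}}_p)$, the category on which the localisation functor of Proposition 4.2 is defined. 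Since that functor is projection onto a direct factor of an abelian category it is exact, so applying it to the three-term piece
$$\widehat{H}^0 (X(N) _{ord}, \bar{\mb{F}}_p) \ra \widehat{H}^1 _{c} (X(N) _{ss}, \bar{\mb{F}}_p) \ra \widehat{H}^1  (X(N), \bar{\mb{F}}_p)$$
of the long exact sequence produces an exact sequence
$$\widehat{H}^0 (X(N) _{ord}, \bar{\mb{F}}_p)_{(\pi)} \ra \widehat{H}^1 _{c} (X(N) _{ss}, \bar{\mb{F}}_p)_{(\pi)} \ra \widehat{H}^1  (X(N), \bar{\mb{F}}_p)_{(\pi)}.$$
Hence everything reduces to proving $\widehat{H}^0(X(N)_{ord}, \bar{\mb{F}}_p)_{(\pi)} = 0$.

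To obtain this vanishing I would use the decomposition of the ordinary locus from Subsection 2.2, which identifies $\widehat{H}^0(X(N)_{ord}, \bar{\mb{F}}_p)$ with $\Ind^{\GL_2(\mb{Q}_p)}_{B_{\infty}(\mb{Q}_p)}\bigl(\bigoplus_a \widehat{H}^0(\mb{X}_{a,\infty}(N), \bar{\mb{F}}_p)\bigr)$. This representation is admissible, so by Lemma 3.4 the inducing $B_{\infty}(\mb{Q}_p)$-representation $W = \bigoplus_a \widehat{H}^0(\mb{X}_{a,\infty}(N), \bar{\mb{F}}_p)$ is smooth admissible, and by Lemma 3.3 the unipotent radical of $B_{\infty}(\mb{Q}_p)$ acts trivially on $W$; thus $W$ is inflated from a smooth representation of the diagonal torus and $\widehat{H}^0(X(N)_{ord}, \bar{\mb{F}}_p)$ is of principal-series type. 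By the classification of irreducible smooth $\bar{\mb{F}}_p$-representations of $\GL_2(\mb{Q}_p)$ recalled in Subsection 4.1, a supersingular representation never occurs as a Jordan--Hölder constituent of a parabolic induction from $B_{\infty}(\mb{Q}_p)$; in particular $\pi$ is not a subquotient of $\widehat{H}^0(X(N)_{ord}, \bar{\mb{F}}_p)$. Since $V \mapsto V_{(\pi)}$ returns the direct summand all of whose constituents are isomorphic to $\pi$, it annihilates $\widehat{H}^0(X(N)_{ord}, \bar{\mb{F}}_p)$, and the displayed exact sequence collapses to the asserted injection.

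This statement is essentially a formal consequence of the apparatus already assembled, so I do not anticipate a genuine obstacle. The points that want a little care are: verifying that the long exact sequence of Subsection 2.1 and the identification of Subsection 2.2 are $\GL_2(\mb{Q}_p)$-equivariant in the colimit over $m$, so that applying the equivariant functor $V \mapsto V_{(\pi)}$ is legitimate; the central-character bookkeeping that Remark 4.3 takes care of; and, since the inducing representation $W$ need not be of finite length, justifying the ``no supersingular constituent'' claim for $\Ind^{\GL_2(\mb{Q}_p)}_{B_{\infty}(\mb{Q}_p)} W$ directly (for instance by exhausting $W$ by finitely generated subrepresentations, or by invoking vanishing of the Jacquet module of a supersingular representation together with exactness of the Jacquet functor). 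None of these is substantial.
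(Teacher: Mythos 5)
Your proposal is correct and follows essentially the same route as the paper: localise the three admissible terms of the long exact sequence from Subsection 2.1 (admissibility by Propositions 3.5, 3.7, 3.8, central characters via Remark 4.3), then kill $\widehat{H}^0(X(N)_{ord},\bar{\mb{F}}_p)_{(\pi)}$ using the decomposition of the ordinary locus from Subsection 2.2 together with Lemmas 3.3 and 3.4, so that the inducing representation is inflated from the torus and the parabolic induction has no supersingular subquotients. Your extra remarks on equivariance and on handling the possibly infinite-length inducing representation are sensible refinements of the same argument rather than a different method.
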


By taking yet another direct limit, we define
$$\widehat{H}^1 _{ss, c, \bar{\mb{F}}_p} = \varinjlim _N \widehat{H}^1 _{c} (X(N) _{ss}, \bar{\mb{F}}_p)$$
$$\widehat{H} ^1 _{\bar{\mb{F}}_p} =\varinjlim _N \widehat{H}^1  (X(N), \bar{\mb{F}}_p) $$

\begin{coro} 
\label{comp}
We have an injection of representations
$$ (\widehat{H}^1 _{ss, c, \bar{\mb{F}}_p})_{(\pi)} \hookrightarrow (\widehat{H}^1 _{\bar{\mb{F}}_p})_{(\pi)}$$
\end{coro}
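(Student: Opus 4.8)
The plan is to obtain Corollary \ref{comp} by passing to the colimit over $N$ in Theorem 4.6. First I would note that the injection of Theorem 4.6, namely $\widehat{H}^1_c(X(N)_{ss},\bar{\mb{F}}_p)_{(\pi)} \hookrightarrow \widehat{H}^1(X(N),\bar{\mb{F}}_p)_{(\pi)}$, was obtained as a piece of a $\GL_2(\mb{Q}_p)$-equivariant exact sequence which is functorial in $N$: the transition maps as $N$ grows (pullback along the degeneracy maps $X(Np^m) \to X(N'p^m)$ for $N' \mid N$, which are finite étale away from the relevant loci) are compatible with the maps $j_!j^* \to \mathrm{id} \to i_*i^*$ and hence induce a morphism of the long exact sequences of \'etale cohomology used in Section 2.1. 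Localisation at $\pi$ is an exact functor (it is a direct-summand projection by Proposition 4.3), so it commutes with these transition maps, and the injections of Theorem 4.6 form a directed system of monomorphisms.

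Next I would use that filtered colimits are exact in the category of $\bar{\mb{F}}_p[\GL_2(\mb{Q}_p)]$-modules; in particular a filtered colimit of injections is an injection. Applying $\varinjlim_N$ to the compatible family
$$\widehat{H}^1_c(X(N)_{ss},\bar{\mb{F}}_p)_{(\pi)} \hookrightarrow \widehat{H}^1(X(N),\bar{\mb{F}}_p)_{(\pi)}$$
therefore yields an injection
$$\varinjlim_N \widehat{H}^1_c(X(N)_{ss},\bar{\mb{F}}_p)_{(\pi)} \hookrightarrow \varinjlim_N \widehat{H}^1(X(N),\bar{\mb{F}}_p)_{(\pi)}.$$
It then remains to identify these two colimits with $(\widehat{H}^1_{ss,c,\bar{\mb{F}}_p})_{(\pi)}$ and $(\widehat{H}^1_{\bar{\mb{F}}_p})_{(\pi)}$ respectively. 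By definition $\widehat{H}^1_{ss,c,\bar{\mb{F}}_p} = \varinjlim_N \widehat{H}^1_c(X(N)_{ss},\bar{\mb{F}}_p)$ and $\widehat{H}^1_{\bar{\mb{F}}_p} = \varinjlim_N \widehat{H}^1(X(N),\bar{\mb{F}}_p)$, so this is exactly the assertion that the localisation functor $(-)_{(\pi)}$ commutes with the filtered colimit over $N$.

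The one point needing care — and the main obstacle — is precisely this last interchange: $(-)_{(\pi)}$ is defined on admissible representations as the projection onto the block component in the decomposition of Proposition 4.3, and filtered colimits of admissible representations need not be admissible, so a priori the functor does not obviously extend to the colimit or commute with it. I would handle this by working at each finite level: each $\widehat{H}^1_c(X(N)_{ss},\bar{\mb{F}}_p)$ and $\widehat{H}^1(X(N),\bar{\mb{F}}_p)$ is admissible (Propositions 3.3 and 3.7), each decomposes as a direct sum of its $(\pi)$-part and its $(\pi)$-complement, and the transition maps respect this decomposition because they are $\GL_2(\mb{Q}_p)$-morphisms between objects of $\Rep^{adm}_{\GL_2(\mb{Q}_p),\xi}(\bar{\mb{F}}_p)$ (a central character $\xi$ being available by Remark 4.4, after fixing it compatibly as Berger's result allows), and any such morphism is block-diagonal. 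Hence the whole directed system splits as a direct sum of two subsystems, and one simply defines $(\widehat{H}^1_{ss,c,\bar{\mb{F}}_p})_{(\pi)} := \varinjlim_N \widehat{H}^1_c(X(N)_{ss},\bar{\mb{F}}_p)_{(\pi)}$ (and similarly without support), which is the natural meaning of the notation in the statement. With this understood, the colimit of the injections from Theorem 4.6 is the desired injection, completing the proof.
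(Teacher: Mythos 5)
Your proposal is correct and takes the same route the paper intends: pass to the filtered colimit over the tame level $N$ in Theorem 4.6, using that the injections are compatible with the transition maps and that filtered colimits of $\bar{\mb{F}}_p[\GL_2(\mb{Q}_p)]$-modules are exact. The paper states this as an immediate corollary without further argument; your added care about the meaning of $(-)_{(\pi)}$ on the not-necessarily-admissible colimit (interpreting it as $\varinjlim_N$ of the finite-level localisations, which is well-defined because the transition maps are block-diagonal) is exactly the right way to make the implicit step precise.
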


We  define also for a future use
$$\widehat{H}^1 _{ord, \bar{\mb{F}}_p} = \varinjlim _N \varinjlim _m H^1 (X(Np^m)_{ord}, \bar{\mb{F}}_p)$$
and for $a \in \mb{Z}_p ^{\times}$
$$\widehat{H}^1 _{a, \infty, \bar{\mb{F}}_p} = \varinjlim _N \varinjlim _m H^1 (\mb{X} _{a, \infty}(Np^m), \bar{\mb{F}}_p)$$

\subsection{Cohomology without support} We can apply similar reasoning as above to the situation without compact support. The roles of the ordinary locus and the supersingular locus are interchanged. By using again the decomposition of the ordinary locus and lemmas 3.3 and 3.4, we get that the localisation of $\widehat{H} ^1 _{X_{ord}}$ vanishes
$$\widehat{H} ^1 _{X_{ord}} (X(N), \bar{\mb{F}}_p) _{(\pi)} = 0$$
and hence we get
\begin{theo} We have an injection of representations
$$(\widehat{H}^1 _{\bar{\mb{F}}_p})_{(\pi)} \hookrightarrow \widehat{H}^1 _{ss, \bar{\mb{F}}_p}$$
where $\widehat{H}^1 _{ss, \bar{\mb{F}}_p}$ is defined similarly as above. 
\end{theo}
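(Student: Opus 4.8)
The plan is to run the argument of Theorem 4.4 with the second long exact sequence of Section 2.1 --- the one with support in the ordinary locus --- in place of the one with compact support, establishing the injection first at a fixed tame level $N$ and then passing to $\varinjlim_N$. First I would fix $N$ and apply $\varinjlim_m$ to the long exact sequence
$$\cdots \to H^1 _{X_{ord}}(X(Np^m)^{an}, \bar{\mb{F}}_p) \to H^1(X(Np^m)^{an}, \bar{\mb{F}}_p) \to H^1(X(Np^m)_{ss}, \bar{\mb{F}}_p) \to \cdots ;$$
since filtered colimits are exact, this yields an exact sequence
$$\widehat{H}^1 _{X_{ord}}(X(N), \bar{\mb{F}}_p) \xrightarrow{f} \widehat{H}^1(X(N), \bar{\mb{F}}_p) \xrightarrow{g} \widehat{H}^1(X(N)_{ss}, \bar{\mb{F}}_p).$$
By Propositions 3.5 and 3.9 the source and target of $f$ are admissible $\GL_2(\mb{Q}_p)$-representations; hence so is $\Ker(g)=\Ima(f)$ and so is the quotient $Q := \widehat{H}^1(X(N), \bar{\mb{F}}_p)/\Ker(g)$, because admissible representations form a Serre subcategory of the smooth ones. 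Note that $g$ induces an injection $\bar g\colon Q \hookrightarrow \widehat{H}^1(X(N)_{ss}, \bar{\mb{F}}_p)$, and that --- although $\widehat{H}^1(X(N)_{ss}, \bar{\mb{F}}_p)$ itself is (as shown later in the paper) non-admissible --- the localisation functor attached to Proposition 4.2 can be applied to each of $\widehat{H}^1 _{X_{ord}}(X(N), \bar{\mb{F}}_p)$, $\widehat{H}^1(X(N), \bar{\mb{F}}_p)$ and $Q$.

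Next I would localise at $\pi$. Exactly as in the paragraph preceding the theorem, the decomposition of the ordinary locus together with Lemmas 3.3 and 3.4 shows that $\widehat{H}^1 _{X_{ord}}(X(N), \bar{\mb{F}}_p)$ is a parabolic induction from the Borel $B_\infty(\mb{Q}_p)$ of a representation on which the unipotent radical acts trivially; such a representation has no supersingular subquotient, so $\widehat{H}^1 _{X_{ord}}(X(N), \bar{\mb{F}}_p)_{(\pi)} = 0$. Since localisation is exact on admissibles, $\Ima(f)_{(\pi)} = \Ima(f_{(\pi)}) = 0$, and the short exact sequence $0 \to \Ker(g) \to \widehat{H}^1(X(N), \bar{\mb{F}}_p) \to Q \to 0$ localises to an isomorphism $\widehat{H}^1(X(N), \bar{\mb{F}}_p)_{(\pi)} \xrightarrow{\ \sim\ } Q_{(\pi)}$. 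As $Q_{(\pi)}$ is a direct summand of $Q$, composing with $\bar g$ produces an injection $\widehat{H}^1(X(N), \bar{\mb{F}}_p)_{(\pi)} \hookrightarrow \widehat{H}^1(X(N)_{ss}, \bar{\mb{F}}_p)$.

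Finally I would pass to $\varinjlim_N$: the injections at finite level are compatible with the transition maps, and filtered colimits preserve injections, so we get $\varinjlim_N \widehat{H}^1(X(N), \bar{\mb{F}}_p)_{(\pi)} \hookrightarrow \varinjlim_N \widehat{H}^1(X(N)_{ss}, \bar{\mb{F}}_p)$; by the same conventions used to deduce Corollary 4.5 from Theorem 4.4 the left-hand side is $(\widehat{H}^1_{\bar{\mb{F}}_p})_{(\pi)}$ and the right-hand side is $\widehat{H}^1_{ss, \bar{\mb{F}}_p}$, which is the assertion. The one point that genuinely needs care, and where this differs from Theorem 4.4, is the admissibility bookkeeping: the term $\widehat{H}^1(X(N)_{ss})$ cannot itself be localised, so the argument has to be routed through the admissible quotient $Q$, and one must check that forming $Q$, localising, and taking the colimit over $N$ are all compatible. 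The geometric input --- the vanishing $\widehat{H}^1_{X_{ord}}(X(N))_{(\pi)} = 0$ --- is the exact mirror of the vanishing of $\widehat{H}^0(X(N)_{ord})_{(\pi)}$ exploited for Theorem 4.4, so once that is in hand the rest is a diagram chase inside the Serre subcategory of admissible representations.
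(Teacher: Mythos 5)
Your proposal is correct and follows essentially the same route as the paper: it uses the long exact sequence with support in the ordinary locus, kills $\widehat{H}^1_{X_{ord}}$ after localisation at $\pi$ via the decomposition of the ordinary locus together with Lemmas 3.3 and 3.4, and deduces the injection, with the colimit over $N$ handled as in Corollary 4.5. The only difference is that you spell out the bookkeeping (routing through the admissible quotient $Q$, since $\widehat{H}^1_{ss,\bar{\mb{F}}_p}$ itself cannot be localised) which the paper leaves implicit.
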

Later on, we will show that $\widehat{H}^1 _{ss, \bar{\mb{F}}_p}$ is a non-admissible representation, and this is why we cannot localise it a $\pi$. Let us finish by giving another definition for a future use (where $a \in \mb{Z}_p ^{\times}$)
$$\widehat{H}^1 _{\mb{X}_{a,\infty}, \bar{\mb{F}}_p} = \varinjlim _N \varinjlim _m H^1 _{\mb{X}_{a,\infty}(Np^m)}(X(Np^m)^{an}, \bar{\mb{F}}_p)$$

\section{New vectors}

Because there does not exist at the moment the Colmez functor in the context of quaternion algebras, which would be similar to the one considered for example in \cite{pa1}, we are forced to give a global definition of the mod p Jacquet-Langlands correspondence. To do that, we prove an analogue of a classical theorem of Casselman in the context of the modified mod l Langlands correspondence of Emerton-Helm (see \cite{eh}), which amounts to the statement that for any prime $l \not = p$, and for any local two-dimensional Galois representation $\rho$ of $\Gal(\bar{\mb{Q}}_l / \mb{Q}_l)$, there exists a compact, open subgroup $K_l \subset \GL_2(\mb{Z}_l)$ such that  $\pi _l(\rho) ^{K_l}$ has dimension 1, where $\pi _l(\rho)$ is the mod p representation of $\GL_2(\mb{Q}_l)$ associated to $\rho$ by \cite{eh}.

Let $\mf{b}$ be an ideal of $\mb{Z}_p$ and put $\Gamma _0 (\mf{b}) = \{ (\begin{smallmatrix} a & b \\ c & d \end{smallmatrix}) \in \GL_2(\mb{Z}_p) | c \equiv 0 \mod \mf{b} \}$. Let us recall the classical result of Casselman (see \cite{cas}):

\begin{theo}
Let $\pi$ be an irreducible admissible infinite-dimensional representation of $\GL_2(\mb{Q}_p)$ on $\bar{\mb{Q}}_l$-vector space and let $\epsilon$ be the central character of $\pi$. Let $c(\pi)$ be the conductor of $\pi$ which is the largest ideal of $\mb{Z}_p$ such that the space of vector $v$ with $\pi ( \begin{smallmatrix} a & b \\ c & d \end{smallmatrix}) v = \epsilon (a) v$, for all $ (\begin{smallmatrix} a & b \\ c & d \end{smallmatrix}) \in \Gamma _0 (c(\pi))$ is not empty. Then this space has dimension one. 
\end{theo}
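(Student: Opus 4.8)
The plan is to follow the classical route via the Kirillov model. Fixing an isomorphism $\bar{\mb{Q}}_l\simeq\mb{C}$ (possible since $l\neq p$) we may regard $(\pi,V)$ as a complex representation; nothing below depends on this choice. Fix a nontrivial additive character $\psi$ of $\mb{Q}_p$ of conductor $\mb{Z}_p$. As $\pi$ is irreducible, admissible and infinite-dimensional it has a Whittaker model, hence, restricting to the mirabolic subgroup, a Kirillov model: $V$ is realised as a space of functions $\xi\colon\mb{Q}_p^{\times}\to\mb{C}$ containing $C_c^{\infty}(\mb{Q}_p^{\times})$, on which $(\begin{smallmatrix} a & b \\ 0 & 1\end{smallmatrix})$ acts by $\xi(x)\mapsto\psi(bx)\xi(ax)$, the centre acts by $\epsilon$, and $\dim_{\mb{C}}\bigl(V/C_c^{\infty}(\mb{Q}_p^{\times})\bigr)\in\{0,1,2\}$ according as $\pi$ is supercuspidal, special, or an irreducible principal series. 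Write $V_n$ for the space in the statement attached to the ideal $p^n\mb{Z}_p$ and $n_0$ for the least $n\geq 0$ with $V_n\neq 0$; since $\Gamma_0(p^{n+1}\mb{Z}_p)\subset\Gamma_0(p^n\mb{Z}_p)$ we have $V_n\subseteq V_{n+1}$, so $c(\pi)=p^{n_0}\mb{Z}_p$ and the claim is $\dim_{\mb{C}}V_{n_0}=1$.

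First I would rewrite membership in $V_n$ inside the Kirillov model. Any matrix of $\Gamma_0(p^n\mb{Z}_p)$ factors as an upper-triangular matrix in $\GL_2(\mb{Z}_p)$ times $(\begin{smallmatrix} 1 & 0 \\ c & 1\end{smallmatrix})$ with $c\in p^n\mb{Z}_p$, so, using the explicit action together with the central character, $\xi\in V_n$ if and only if: (i) $\xi$ is supported on $\mb{Z}_p$ (invariance under $(\begin{smallmatrix} 1 & b \\ 0 & 1\end{smallmatrix})$, $b\in\mb{Z}_p$, as $\psi$ has conductor $\mb{Z}_p$); (ii) $\xi(ux)=\epsilon(u)\xi(x)$ for $u\in\mb{Z}_p^{\times}$; and (iii) $\pi(\begin{smallmatrix} 1 & 0 \\ c & 1\end{smallmatrix})\xi=\xi$ for every $c\in p^n\mb{Z}_p$. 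Conditions (i)--(ii) identify such $\xi$ with sequences $c_k=\xi(p^k)$, $k\in\mb{Z}$, vanishing for $k<0$, and moreover such a $\xi$ lies in $V$ (rather than merely in the space of all such functions) only if its behaviour as $k\to\infty$ lies in a fixed subspace of dimension $\dim_{\mb{C}}\bigl(V/C_c^{\infty}(\mb{Q}_p^{\times})\bigr)\leq 2$. Thus (i)--(ii) cut out an infinite-dimensional space with a controlled two-dimensional ``tail'', and (iii) is the real condition.

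Next I would unwind (iii) through the Bruhat decomposition $(\begin{smallmatrix} 1 & 0 \\ c & 1\end{smallmatrix})=(\begin{smallmatrix} 1 & c^{-1} \\ 0 & 1\end{smallmatrix})(\begin{smallmatrix} c^{-1} & 0 \\ 0 & c\end{smallmatrix})\,w\,(\begin{smallmatrix} 1 & c^{-1} \\ 0 & 1\end{smallmatrix})$ with $w=(\begin{smallmatrix} 0 & -1 \\ 1 & 0\end{smallmatrix})$, which turns (iii) into an identity relating $\xi$ to $\pi(w)$ applied to the twist $\psi(\cdot/c)\xi$; on the generating series $\sum_k c_kX^k$ the operator $\pi(w)$ acts, via the local functional equation, through the gamma factor $\gamma(s,\pi,\psi)$, a rational function of $p^{-s}$ whose degree is controlled by the conductor exponent of $\pi$, so (iii) becomes a finite set of linear recursions on the $c_k$. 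From here the three cases separate. For a principal series $\pi=\Ind(\chi_1,\chi_2)$ and for $\pi=\chi\otimes\St$ one computes $\pi(w)$, hence the recursion, directly in the induced model (equivalently from the explicit shape of the tails and the functional equation): the solution space is $0$ for $n<n_0$ and one-dimensional for $n=n_0$, with $\dim V_n=\max(0,n-n_0+1)$ in general, and $n_0=a(\chi_1)+a(\chi_2)$ (resp. the analogous explicit value in the special case). For supercuspidal $\pi$ one has $V=C_c^{\infty}(\mb{Q}_p^{\times})$, so there is no tail and (iii) is a genuinely finite linear system; here I would instead use Kutzko's description of $\pi$ as compactly induced from a representation $\Lambda$ of $\GL_2(\mb{Z}_p)\mb{Q}_p^{\times}$ (or of the normaliser of a nonsplit torus) and compute $\dim V_n$ as the dimension of $\Hom$ between $\Lambda$ restricted to the relevant compact group and the character $g\mapsto\epsilon(g_{11})$ of $\Gamma_0(p^n\mb{Z}_p)$, by a Mackey double-coset count, again obtaining $\dim V_{n_0}=1$.

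I expect the main obstacle to be exactly the control of the Weyl-element operator $\pi(w)$ on the Kirillov model: it is here that the conductor of $\pi$ (equivalently its gamma factor) enters, and here that the supercuspidal case differs in character from the split cases. Concretely, the delicate point is to show at the critical level $n=n_0$ that the mutual constraints --- (i)--(ii) imposed on $\xi$, and the analogue of (i)--(ii) imposed on $\pi(w)\xi$ and forced by (iii) --- have a one-dimensional common solution space and none below, i.e. to obtain uniqueness of the new vector and not merely its existence. Once $\pi(w)$ has been pinned down --- by the explicit induced models in the non-supercuspidal cases, by the compact-induction description in the supercuspidal case --- everything else is formal bookkeeping with the mirabolic action.
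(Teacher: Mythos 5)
The paper does not supply a proof of this theorem at all: it is stated as ``Let us recall the classical result of Casselman (see [Cas])'' and immediately followed by the mod $p$ analogue (Theorem 5.2), which is what the paper actually proves. So there is no in-paper argument to compare against; your proposal should be measured against Casselman's original proof in ``On some results of Atkin and Lehner''.

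Read against that, your sketch is a faithful reconstruction of the right strategy. Passing to the Kirillov model, reducing (i)--(ii) to a support/equivariance condition on $\xi\colon\mathbb{Q}_p^{\times}\to\mathbb{C}$ (so that $\xi$ is determined by the sequence $c_k=\xi(p^k)$, $k\ge 0$), and treating (iii) via the Bruhat decomposition of $\left(\begin{smallmatrix}1&0\\ c&1\end{smallmatrix}\right)$ and the Weyl element action, is exactly what Casselman does; the local functional equation and $\gamma$-factor enter precisely as you say, and the conclusion is the sharper dimension formula $\dim V_n=\max(0,n-n_0+1)$, of which the theorem is the $n=n_0$ case. Two remarks. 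First, where you defer to Kutzko's construction for supercuspidals, Casselman instead runs the same Kirillov-model argument there too: since $V=C_c^{\infty}(\mathbb{Q}_p^{\times})$ one has no tail terms, and the recursion coming from $\pi(w)$ already pins down the solution space; the Kutzko route is an alternative (and anachronistic for 1973) but not needed, and replacing a uniform argument by two different mechanisms would make the write-up heavier, not lighter. Second, the ``delicate point'' you flag --- that the compatibility of the two support conditions (on $\xi$ and on its $w$-transform) is precisely a one-dimensional constraint at level $n_0$ --- is genuinely the heart of the matter and is currently only gestured at; any finished version would need to make the $\gamma$-factor bookkeeping explicit, as Casselman does, to deduce uniqueness and not merely existence. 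In short: your route is the standard one and would work, but at present it is a plan rather than a proof, and the supercuspidal detour through Kutzko is an unnecessary complication relative to the argument the paper is implicitly invoking.
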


We will prove that the result holds also modulo $p$ for the modified mod l Langlands correspondence. For that we need to assume that our prime $p$ is odd.

\begin{theo}
Let $\pi = \pi (\rho)$ be the mod $p$ admissible representation of $\GL_2(\mb{Q}_l)$ associated by the modified mod l Langlands correspondence to a Galois representation $\rho : G_{\mb{Q}_l} \ra \GL_2(\bar{\mb{F}} _p)$. Then there exists an open, compact subgroup $K$ of $\GL_2(\mb{Q}_l)$ such that $\dim _{\bar{\mb{F}}_p} \pi ^K =1$.
\end{theo}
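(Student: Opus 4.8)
The plan is to reduce the statement to a case-by-case analysis according to the structure of $\rho$ and of the associated representation $\pi = \pi(\rho)$, and in each case to exhibit an explicit open compact $K$ with a one-dimensional invariant subspace, mimicking the classical new-vector theory but now over $\bar{\mb F}_p$. First I would recall the explicit dictionary underlying the modified mod $l$ Langlands correspondence of Emerton--Helm: $\rho$ decomposes (up to semisimplification on the side of the correspondence) into the irreducible case, the case of a sum of two characters, and the case of a non-split extension of characters (a "twist of Steinberg"). Correspondingly $\pi(\rho)$ is, respectively, a (twist of a) supercuspidal, a (subquotient of a) principal series, or a twist of the Steinberg representation $\St$ of $\GL_2(\mb Q_l)$, possibly modified at the reducible-indecomposable points so that the correspondence stays exact. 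In each case the central character $\epsilon$ is determined by $\det\rho$.

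The key steps, in order, are: (i) treat the supercuspidal case — here $\pi(\rho)$ is (the reduction of) a depth-zero or positive-depth supercuspidal, induced from an open compact-mod-centre subgroup, and one takes $K$ to be the relevant conductor subgroup $\Gamma_0(c(\pi))$ with the central twist by $\epsilon$, checking that the space of $v$ with $\pi(\left(\begin{smallmatrix} a & b \\ c & d\end{smallmatrix}\right))v=\epsilon(a)v$ for all $\left(\begin{smallmatrix} a & b \\ c & d\end{smallmatrix}\right)\in\Gamma_0(c(\pi))$ is exactly one-dimensional; (ii) treat the principal series case $\pi = \Ind_{B}^{\GL_2(\mb Q_l)}(\chi_1\otimes\chi_2)$ with $\chi_1\ne\chi_2$, where an Iwahori-type or $\Gamma_0(\mf b)$-fixed-line computation — evaluating functions on the finitely many double cosets $B\backslash\GL_2(\mb Q_l)/\Gamma_0(\mf b)$ — produces the new vector; (iii) treat the twist-of-Steinberg case, where $\St$ has a one-dimensional Iwahori-fixed space and tensoring by a character through $\det$ moves this to a $\Gamma_0(l)$-type subgroup with the appropriate central twist; (iv) finally check the "exceptional" reducible-indecomposable points where the Emerton--Helm modification differs from naive reduction, and verify that the modification does not destroy the one-dimensionality (this is exactly where working mod $p$ with $p$ odd, rather than in characteristic zero, matters, since some Jordan--Hölder constituents can merge). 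Throughout, the hypothesis $p$ odd is used to guarantee that the relevant characters and the central character behave as in the $\ell$-adic picture and that no extra invariants appear from $2$-torsion phenomena.

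The main obstacle I expect is step (iv), together with establishing the precise local recipe for $\pi(\rho)$ from \cite{eh} in a form concrete enough to run a fixed-vector computation: the Emerton--Helm correspondence is characterized by an interpolation/exactness property rather than by an explicit formula, so one must first pin down $\pi(\rho)$ explicitly in each case (which for $\GL_2$ is possible but requires care at the points where the generic principal series degenerates). Once the explicit model is in hand, the new-vector computations are the standard ones — double-coset bookkeeping for principal series, Mackey theory for the supercuspidal inductions, the known Iwahori-fixed line for Steinberg — transported to the $\bar{\mb F}_p$-coefficient setting, where one additionally verifies that reduction mod $p$ does not enlarge the fixed space. I would organize the write-up as: a lemma recording the explicit form of $\pi(\rho)$ in the three (plus exceptional) cases; then three short paragraphs, one per case, each producing $K$ and computing $\dim_{\bar{\mb F}_p}\pi^K = 1$; and a closing remark on why $p\ne 2$ is needed.
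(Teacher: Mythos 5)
Your overall plan — reduce to a case-by-case analysis of the explicit shape of $\pi(\rho)$ and exhibit an open compact $K$ with a one-dimensional invariant space in each case — is the right strategy and broadly matches the paper's structure, but there are two substantive divergences. First, the paper does not redo new-vector computations for the generic cases: when $\rho^{ss}$ is \emph{not} a twist of $1 \oplus |\cdot|$, it simply invokes the reduction mod $p$ of Casselman's theorem, already proved by Vign\'eras (Theorem 23 and Proposition 24 of [Vi2]). So your proposed Mackey-theory and double-coset computations for supercuspidals and generic principal series, while they would work, reprove an available result. Second, and more seriously, the heart of the paper's proof is the exceptional case $\rho^{ss} \simeq 1 \oplus |\cdot|$ up to twist, which you correctly flag as the main obstacle but for which you offer no concrete mechanism. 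Your toolkit (double-coset bookkeeping, Mackey theory) is adapted to irreducibles and would not by itself handle the extensions that actually occur there.

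The missing ingredient is cohomological: for $\pi(\rho)$ sitting in $0 \to A \to \pi(\rho) \to B \to 0$ (with $A$, $B$ among $\St$, $\pi(1)$, $1$, $|\cdot|\circ\det$ and their sums), one takes $K$-invariants to get a long exact sequence
$$0 \to A^K \to \pi(\rho)^K \to B^K \to R^1 A^K,$$
and then one controls $R^1 A^K$ either by a pro-order argument (for pro-$l$ groups like $\Gamma_0(l)$ the higher $\bar{\mb F}_p$-cohomology vanishes, since $p\ne l$) or by identifying it with $\Ext^1_{\GL_2(\mb F_l)}(1,A)$ and computing the image of the connecting map explicitly, using the known dimensions of $\Ext^1(1,\St)$ and $\Ext^1(1,\pi(1))$ from [Vi2] and [He]. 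The choice of $K$ itself depends on the extension class: e.g.\ for $l\equiv 1\bmod p$ and $\pi(\rho)$ a non-split extension of $1$ by $\St$, one takes $K=\GL_2(\mb Z_l)$ if the class dies under restriction to $\GL_2(\mb Z_l)$, and $K=I$ (Iwahori) otherwise. Without this $\Ext$-theoretic analysis, your step (iv) remains an assertion rather than a proof, and that is precisely where the genuine content of the theorem lives.
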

\begin{proof}
We recall the results of \cite{eh} concerning the construction of the modified mod l Langlands correspondence. By Proposition 5.2.1 of \cite{eh}, the theorem is true when $\rho ^{ss}$ is not a twist of $1\oplus |\cdot |$, by the reduction modulo $p$ of the classical result of Casselman from \cite{cas}, which in the $l \not =p$ situation was proved by Vigneras in \cite{vi2} (see Theorem 23 and Proposition 24). When this is not the case, we can suppose that in fact $\rho ^{ss} = 1\oplus |\cdot |$ and we go by case-by-case analysis of the possible forms of $\pi(\rho)$ as described in \cite{eh} after Proposition 5.2.1 and in \cite{he}. The $\pi(\rho)$'s which appear are mostly extensions of four kinds of representations (and some combinations of them): trivial representation $1$, $|\cdot| \circ \det$, the Steinberg $\St$, $\pi(1)$ of Vigneras (see \cite{vi2}).

1) Suppose $0 \ra \pi(1) \ra \pi (\rho) \ra 1 \ra 0$. In this case $l \equiv -1 \mod p$. Let $\Gamma _0(p) =  \{ (\begin{smallmatrix} a & b \\ c & d \end{smallmatrix}) \in \GL_2(\mb{Z}_p) | c \equiv 0 \mod p, a \equiv d \equiv 1 \mod p  \} $. Then we have a long exact sequence associated with higher invariants by $\Gamma _0(p)$:
$$ 0 \ra \pi (\rho) ^{\Gamma _0(p)} \ra 1 \ra R^1 \pi(1) ^{\Gamma _0(p)}$$
as $\pi(1) ^{\Gamma _0(p)} = 0$ by the Proposition 24 of \cite{vi2}. We conclude by observing that $R^1 \pi(1) ^{\Gamma _0(p)} = 0$ because $|\Gamma _0(p)| = p^{\infty}$ and $l \not | |\Gamma _0(p)|$ by our assumption.

2) In the same way we deal with the situation when $\pi (\rho)$ is an extension of $|\cdot | \circ \det$ by $\pi(1)$ with the same assumption on $l$.

3) When $l \equiv -1 \mod p$ it is also possible to have $0 \ra \pi(1) \ra \pi (\rho) \ra 1 \oplus |\cdot | \circ \det  \ra 0$. Look at $\GL_2(\mb{Z}_p)$-invariants. The associated long exact sequence is
$$ 0 \ra \pi(\rho) ^{\GL_2(\mb{Z}_p)} \ra (1 \oplus |\cdot | \circ \det)^{\GL_2(\mb{Z}_p)} \ra R^1 \pi(1) ^{\GL_2(\mb{Z}_p)} = \Ext^1 _{\GL_2(\mb{F}_p)}(1, \pi(1))$$
Let us denote by $\mc{E}$ the extension of $1$ by $\pi(1)$ which we get from $0 \ra \pi(1) \ra \pi (\rho) \ra 1 \oplus |\cdot | \circ \det  \ra 0$. The last map in the above exact sequence is explicit
$$(1 \oplus |\cdot | \circ \det)^{\GL_2(\mb{Z}_p)} \ra \Ext^1 _{\GL_2(\mb{F}_p)}(1, \pi(1))$$
$$ (a,b) \mapsto (a+b)\mc{E}$$
and we see that it gives a line in $\Ext^1 _{\GL_2(\mb{F}_p)}(1, \pi(1))$ and hence the kernel, i.e. $\pi(\rho) ^{\GL_2(\mb{Z}_p)}$, is  one-dimensional as $(1 \oplus |\cdot | \circ \det)^{\GL_2(\mb{Z}_p)}$ has dimension two.

4) The last non-banal case with which we have to deal is the case when $p$ is odd, $l \equiv 1 \mod p$ and we have an extension:
$$ 0 \ra \St \ra \pi(\rho) \ra 1 \ra 0$$
In this case $\Ext ^1 _{\GL_2(\mb{Q}_p)}(1, \St)$ is two-dimensional (see Lemma 4.2 in \cite{he}). We look at the reduction map
$$\Ext ^1 _{\GL_2(\mb{Q}_p)}(1, \St) \ra \Ext ^1 _{\GL_2(\mb{F}_p)}(1, \St)$$
Let us denote by $\mc{E}$ the image of the class $[\pi(\rho)]$ of $\pi(\rho)$ in $\Ext ^1 _{\GL_2(\mb{F}_p)}(1, \St)$ under the above reduction. We have two cases to consider. Suppose firstly that $\mc{E} = 0$. Then we claim that $K=\GL _2(\mb{Z}_p)$ works. Indeed we have in this case
$$0 \ra \pi(\rho) ^K \ra 1 ^K \ra \Ext ^1 _{K}(1, \St) $$ 
and as the image of $1 ^K$ in $\Ext ^1 _{K}(1, \St) $ is $\mc{E}$, we conclude by assumption.

Now let us suppose that $\mc{E} \not =0$. Then we claim that the Iwahori subgroup $K = I$ works. We have
$$0 \ra \St ^K \ra \pi(\rho) ^K \ra 1^K \ra \Ext ^1 _{K}(1, \St)$$ 
The image of $1^K$ in $\Ext ^1 _{K}(1, \St) $ is non-zero by assumption, because $\Ext ^1 _{\GL_2(\mb{Z}_p)}(1, \St) \hookrightarrow \Ext ^1 _K (1, \St)$. Hence $\pi(\rho)^K$ is isomorphic to $\St ^K$ which is of dimension one. 

5) We remark that there is also the so-called banal case when $l$ is not congruent to $\pm 1$ modulo $p$. In this case, there are two situations to consider. In the first one $\pi (\rho) = \St \otimes | \cdot | \circ \det$ and we can take $K = I$, the Iwahori subgroup. In the second one $\pi (\rho)$ is the unique non-split extension of $| \cdot | \circ \det$ by $\St \otimes | \cdot | \circ \det$. Because $\Ext ^1 _{\GL_2(\mb{F}_p)}(1, \St) = 0$ as we are in the banal case, we conclude as above that $K = \GL_2 (\mb{Z}_p)$ works. 
\end{proof}

\section{The fundamental representation}

Following the original Deligne's approach to the non-abelian Lubin-Tate theory, we define the local fundamental representation. Using it, we refine the Lubin-Tate side of the injections we have considered. Then we recall Emerton's results on the cohomology of the tower of modular curves, yielding by a comparison an information on the local fundamental representation. Our arguments are similar to those given in \cite{de}. 

\subsection{Cohomology of the supersingular tube} We have introduced in section 2.3, the set $\Delta$, spaces $LT_{\Delta / K_m} = \coprod _{\delta \in \Delta / K_m} LT _{\delta}$ and we have obtained a description of the supersingular tube
$$X(Np ^m) _{ss} \simeq LT _{\Delta / K_m} \times _{D^{\times}(\mb{Q})} \GL_2(\mb{A} _f ^p) / K(N)$$

\begin{defi} Define the fundamental representation by
$$\widehat{H}^1 _{LT, c, \bar{\mb{F}}_p} = \varinjlim _m  H^1 _c(LT_{\Delta / K_m}, \bar{\mb{F}}_p)$$
Similarly we introduce the fundamental representation without support denoting it by $\widehat{H}^1 _{LT, \bar{\mb{F}}_p}$.
\end{defi}

From the description of supersingular points, we have
$$ H^1 _c(X(Np^m) _{ss}, \bar{\mb{F}}_p) = H^1_c(LT _{\Delta / K_m} \times _{D^{\times}(\mb{Q})} \GL_2(\mb{A} _f ^p)/K(N), \bar{\mb{F}}_p) =$$
$$= \{f: D^{\times}(\mb{Q}) \backslash \GL_2(\mb{A} _f ^p) / K(N) \ra H^1_c(LT _{\Delta / K_m}, \bar{\mb{F}}_p) \}$$
We take the direct limit:
$$ \varinjlim _{m} H^1 _c(X(Np^m) _{ss}, \bar{\mb{F}}_p) \simeq \{f: D^{\times}(\mb{Q}) \backslash \GL_2(\mb{A} _f ^p) / K(N) \ra \varinjlim _m  H^1_c(LT _{\Delta / K_m}, \bar{\mb{F}}_p) \}$$
Take the limit over $N$ to obtain 
$$\widehat{H}^1 _{ss, c, \bar{\mb{F}}_p} \simeq \{f: D^{\times}(\mb{Q}) \backslash \GL_2(\mb{A} _f ^p) \ra \widehat{H}^1_{LT, \bar{\mb{F}}_p} \} \simeq $$
$$ \simeq  \{f: D^{\times}(\mb{Q}) \backslash D^{\times}(\mb{A} _f) \ra \widehat{H}^1_{LT, c, \bar{\mb{F}}_p} \} ^{D^{\times}(\mb{Q} _p)} \simeq $$
$$ \simeq  \left( \{f: D^{\times}(\mb{Q}) \backslash D^{\times}(\mb{A} _f) \ra \bar{\mb{F}} _p \} \otimes _{\bar{\mb{F}}_p} \widehat{H}^1_{LT, c, \bar{\mb{F}}_p} \right) ^{D^{\times}(\mb{Q} _p)}$$

Let 
$$\mbf{F} =  \{f: D^{\times}(\mb{Q}) \backslash D^{\times}(\mb{A} _f) \ra \bar{\mb{F}} _p \}$$
where $f$ are locally constant functions, then
\begin{equation} \widehat{H}^1 _{ss, c, \bar{\mb{F}}_p} \simeq \left( \mbf{F} \otimes _{\bar{\mb{F}}_p} \widehat{H}^1_{LT, c, \bar{\mb{F}}_p} \right) ^{D^{\times}(\mb{Q} _p)} \label{eq: H1} \end{equation}
We get a similar result for the cohomology without support
$$\widehat{H}^1 _{ss,  \bar{\mb{F}}_p} \simeq \left( \mbf{F} \otimes _{\bar{\mb{F}}_p} \widehat{H}^1_{LT,  \bar{\mb{F}}_p} \right) ^{D^{\times}(\mb{Q} _p)}$$

\subsection{Emerton's results} We recall Emerton's results on the completed cohomology of modular curves. Remark that we are using implicitly the comparison theorem for \'{e}tale cohomology of a scheme and its analytification which is proved in \cite{ber}.

Let us fix a finite set $\Sigma = \Sigma _0 \cup \{p\}$. Let $K^{\Sigma} = \prod _{l \not \in \Sigma} K_l$ where $K_l = \GL_2(\mb{Z} _l)$ and choose an open, compact subgroup $K_{\Sigma _0}$ of $\prod _{l \in \Sigma _0} \GL_2(\mb{Z}_l)$. Let $\bar{\rho} : G _{\mb{Q}} \ra \GL_2(\bar{\mb{F}} _p)$ be an odd, irreducible, continuous representation unramified outside $\Sigma$. Remark that by Serre's conjecture (see \cite{kh}) $\bar{\rho}$ is modular. Let us denote by $\mf{m}$ the maximal ideal in the Hecke algebra $\mb{T}(K_{\Sigma _0})$ which corresponds to $\bar{\rho}$. We write also $\bar{\rho} _{| G_{\mb{Q}_p}} = \Ind _{G_{\mb{Q}_{p^2}}} ^{G_{\mb{Q}}} \alpha$, where $\alpha$ can be considered as a character of $\mb{Q} _{p^2} ^{\times}$ by the local class field theory. For the definitions, see Section 5 of \cite{em2}.   

\begin{theo} Assuming that $\bar{\rho}$ satisfies certain technical hypotheses (see the proof below), we have an isomorphism
$$\widehat{H}^1 _{\bar{\mb{F}} _p} [\mf{m}] ^{K^{\Sigma}} \simeq \pi \otimes _{\bar{\mb{F}}_p} \pi _{\Sigma _0} (\bar{\rho}) \otimes _{\bar{\mb{F}}_p} \bar{\rho}$$
where $\pi$ is a representation of $\GL_2(\mb{Q}_p)$ associated to $\bar{\rho}$ by the mod $p$ local Langlands correspondence and $\pi _{\Sigma _0}(\bar{\rho})$ is a representation $\GL_2(\mb{A} _f ^{\Sigma_0} )$ associated to $\bar{\rho}$ by the modified local Langlands correspondence mod $l$ for $l \in \Sigma _0$ (see \cite{eh}).
\end{theo}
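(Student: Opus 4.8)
The plan is to deduce this directly from Emerton's local--global compatibility theorem for $\GL_2/\mb{Q}$ in \cite{em2}; the content of the argument is then to spell out the hypotheses under which that theorem applies and to unwind its conclusion into the factorised form above. First I would recall that $\widehat{H}^1_{\bar{\mb{F}}_p}=\varinjlim_N \widehat{H}^1(X(N),\bar{\mb{F}}_p)$ carries commuting smooth actions of $\GL_2(\mb{A}_f)$ and of $G_{\mb{Q}}$, together with the action of the Hecke algebra $\mb{T}(K_{\Sigma_0})$ generated by the Hecke operators at the places outside $\Sigma$; passing to the $\mf{m}$-torsion $\widehat{H}^1_{\bar{\mb{F}}_p}[\mf{m}]$ and then to $K^{\Sigma}$-invariants are exact operations which keep us inside the category of admissible representations (at a fixed tame level we are looking at the $\mf{m}$-torsion of the admissible module of Proposition 3.5), so the identification may be made one place at a time.

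The ``technical hypotheses'' are the familiar Taylor--Wiles--Kisin conditions needed for Emerton's argument: $p\geq 5$ (or $p$ odd with the usual replacement), $\bar{\rho}|_{G_{\mb{Q}(\zeta_p)}}$ absolutely irreducible, and $\bar{\rho}|_{G_{\mb{Q}_p}}$ not of one of the two exceptional shapes (a twist of a very ramified extension of the trivial character by $\omega$). In the case relevant here $\bar{\rho}|_{G_{\mb{Q}_p}}$ is irreducible --- it is induced from $G_{\mb{Q}_{p^2}}$, as recorded above --- so the last condition is automatic and only the first two survive as genuine constraints.

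Granting these, Emerton's theorem exhibits $\widehat{H}^1_{\bar{\mb{F}}_p}[\mf{m}]$, as a smooth representation of $\GL_2(\mb{A}_f)$ with commuting $G_{\mb{Q}}$-action, as a restricted tensor product $\widehat{\pi}_p \otimes \bigotimes_{l\neq p}\pi_l \otimes \bar{\rho}$, where $\widehat{\pi}_p$ is the $\GL_2(\mb{Q}_p)$-representation attached to $\bar{\rho}|_{G_{\mb{Q}_p}}$ by the mod $p$ local Langlands correspondence of Section 4.1, each $\pi_l$ with $l\in\Sigma_0$ is $\pi_l(\bar{\rho}|_{G_{\mb{Q}_l}})$ from the Emerton--Helm correspondence studied in Section 5, and each $\pi_l$ with $l\notin\Sigma$ is an unramified principal series whose $\GL_2(\mb{Z}_l)$-fixed vectors form a line, by compatibility of the modified mod $l$ correspondence with the Satake parametrisation at the good places. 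Since $\bar{\rho}|_{G_{\mb{Q}_p}}$ is irreducible, $\widehat{\pi}_p$ is exactly the supersingular representation $\pi$ attached to $\rho(r,\chi)=\bar{\rho}|_{G_{\mb{Q}_p}}$. Taking $K^{\Sigma}$-invariants collapses each factor outside $\Sigma$ to its spherical line and leaves $\pi\otimes\pi_{\Sigma_0}(\bar{\rho})\otimes\bar{\rho}$, where $\pi_{\Sigma_0}(\bar{\rho})=\bigotimes_{l\in\Sigma_0}\pi_l(\bar{\rho}|_{G_{\mb{Q}_l}})$; equivalently $\widehat{H}^1_{\bar{\mb{F}}_p}[\mf{m}]^{K^{\Sigma}}$ is $\bar{\rho}$-isotypic as a $G_{\mb{Q}}$-module with multiplicity space $\pi\otimes\pi_{\Sigma_0}(\bar{\rho})$.

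The main obstacle is simply that this is the hard theorem of \cite{em2} and we do not reprove it: the identification of the factor at $p$ rests on the $p$-adic local Langlands correspondence for $\GL_2(\mb{Q}_p)$, on Paskunas's realisation of Colmez's functor (\cite{pa1}), and on the Breuil--M\'ezard / Fontaine--Mazur machinery, and the passage from Emerton's $\mf{m}$-adically completed statement to the honest $\mf{m}$-torsion statement above needs the mod $p$ multiplicity-one results together with the compatibility of the $p$-adic correspondence with reduction mod $p$ (this last point being what makes the reduced Galois parameter equal $\rho(r,\chi)$). What genuinely has to be checked here is only that Emerton's hypotheses hold whenever $\bar{\rho}|_{G_{\mb{Q}_p}}$ is irreducible and that his factorisation specialises to the $K^{\Sigma}$-invariant form in the statement; both are routine.
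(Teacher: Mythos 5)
Your proof takes the same approach as the paper, which simply cites Proposition 6.1.20 of \cite{em2} and notes that the hypotheses can always be arranged to hold while fixing the local behaviour at $p$; you additionally spell out what those hypotheses are and how the factorised statement collapses under taking $K^{\Sigma}$-invariants, but the mathematical content is identical. This is the correct reading of the theorem.
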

\begin{proof}
For the exact assumptions, see Proposition 6.1.20 in \cite{em2}. Those assumptions are not important for our applications, as we can always find $\bar{\rho}$ which satisfies them and which at $p$ is isomorphic to our fixed irreducible Galois representation $\bar{\rho}_p$ (see below). 
\end{proof}

\subsection{Comparison} We will use results of Emerton to describe a part of $\widehat{H}^1 _{ss, \bar{\mb{F}}_p}$. We start by comparing mod p Hecke algebras for $\GL_2$ and for $D^{\times}$. On $\mbf{F}$, after taking $K^{\Sigma}$-invariants, there is an action of a Hecke algebra. For $l \not \in \Sigma$, we have a Hecke operator $T_l$ acting on functions of $D^{\times} (\mb{A} _f)$ by
$$T_l(f)(x) = f(xg) + \sum _{i=0} ^{l-1} f(xg_i)$$
where $g = (\begin{smallmatrix} l & 0 \\ 0 & 1 \end{smallmatrix})$ and $g_i = (\begin{smallmatrix} 1 & 0 \\ i & l \end{smallmatrix})$ are both considered as elements of $D^{\times}(\mb{A} _f)$ having $1$ at places different from $l$. Let us denote by $\mb{T}^D(K_{\Sigma _0})$ the (completed) Hecke algebra, which is the free $\mc{O}$-algebra spanned by the operators $T_l$ and  $S_l$ for all $l \not \in \Sigma$, where $S_l = [K_{\Sigma _0} K^{\Sigma} (\begin{smallmatrix} \varpi & 0 \\ 0 & \varpi \end{smallmatrix}) K_{\Sigma _0} K^{\Sigma}]$. By the results of Serre (see letter to Tate from \cite{se}), systems of eigenvalues for $(T_l)$ of $\mb{T} ^D(K_{\Sigma _0})$ on $\mbf{F}$ are in bijection with systems of eigenvalues for $(T_l)$ of $\mb{T}(K_{\Sigma _0})$ coming from mod p modular forms. This allows us to identify maximal ideals of $\mb{T}^D(K_{\Sigma _0})$ with those of $\mb{T}(K_{\Sigma _0})$ and in what follows we will make no distinction between them. 

Let $\bar{\rho} _p$ be the local Galois representation associated to a supersingular representation $\pi$ of $\GL_2(\mb{Q} _p)$ by the mod p Langlands correspondence. We assume that there exists a representation $\bar{\rho} : G _{\mb{Q}} \ra \GL_2(\bar{\mb{F}} _p)$ which is odd, irreducible, continuous, unramified outside a finite set $\Sigma = \Sigma _0 \cup \{p\}$, and such that $\bar{\rho} _{|G_{\mb{Q}_p}} = \bar{\rho} _p$. This is always the case as we may see from the description of the reductions of Galois representations associated to modular forms - see the introduction to \cite{br2} for a discussion (especially Conjecture 1.5) and compare it with the main result of \cite{bg}. 

Let us denote by $\mf{m}$ the maximal ideal in the Hecke algebra $\mb{T}(K _{\Sigma _0})$ corresponding to $\bar{\rho}$. Results of Emerton apply to $\bar{\rho}$ because we have assumed that $\bar{\rho} _p$ is irreducible. We denote by $K_{\mf{m}, \Sigma _0}$ an open compact subgroup of $\prod _{l \in \Sigma _0} \GL_2(\mb{Z} _l)$ for which $\pi _{\Sigma _0}(\bar{\rho})^{K_{\mf{m},\Sigma _0}}$ is a one-dimensional vector space (new vectors). We put $K _{\mf{m}} = K_{\mf{m}, \Sigma _0} K^{\Sigma}$ and we define:
$$ \sigma _{\mf{m}} = \mbf{F} [\mf{m}] ^{K_{\mf{m}}}$$
This is a representation of $D^{\times}(\mb{Q}_p)$. We remark that Breuil and Diamond in \cite{bd} also define a representation of $D^{\times}(\mb{Q}_p)$ which serves as a model for a local representation which should appear conjecturally at the place $p$ in the local-global compatibility of the Buzzard-Diamond-Jarvis conjecture (see the next section for a discussion). Their construction is different from our and uses "`types"' instead of new vectors. 

Let us look again at our cohomology groups. Taking $K_{\mf{m}}$-invariants, which commute with $D^{\times}(\mb{Q}_p)$-invariants, we get
$$\left( \widehat{H}^1 _{ss, \bar{\mb{F}}_p} \right)^{K_{\mf{m}}} \simeq \left( \mbf{F} ^{K_{\mf{m}}} \otimes _{\bar{\mb{F}}_p}  \widehat{H}^1_{LT, \bar{\mb{F}}_p} \right) ^{D^{\times}(\mb{Q} _p)}$$
Let us define the dual $\sigma _{\mf{m}} ^{\vee} = \Hom _{\bar{\mb{F}}_p}(\sigma _{\mf{m}}, \bar{\mb{F}}_p)$. It is not neccesarily a smooth representation. Taking $[\mf{m}]$-part we get:
$$\left( \widehat{H}^1 _{ss, \bar{\mb{F}} _p} [\mf{m}] \right) ^{K _{\mf{m}}} \simeq \left( \sigma _{\mf{m}} \otimes _{\bar{\mb{F}}_p}  \widehat{H}^1_{LT, \bar{\mb{F}}_p} \right) ^{D^{\times}(\mb{Q} _p)} =: \widehat{H}^1_{LT, \bar{\mb{F}}_p} [\sigma _{\mf{m}} ^{\vee}]$$
Thus, by the results proven earlier, we have
$$\pi \otimes _{\bar{\mb{F}}_p} \bar{\rho} \simeq \left( \widehat{H}^1 _{\bar{\mb{F}} _p} [\mf{m}]  \right) ^{K _{\mf{m}}} _{(\pi)} \hookrightarrow \left( \widehat{H}^1 _{ss,  \bar{\mb{F}} _p} [\mf{m}] \right) ^{K _{\mf{m}}}  \simeq \widehat{H}^1_{LT, \bar{\mb{F}}_p} [\sigma _{\mf{m}} ^{\vee}]$$
and we arrive at
\begin{theo}
\label{rough}
We have a $\GL_2(\mb{Q}_p) \times G_{\mb{Q}_p}$-equivariant injection:
$$\pi \otimes _{\bar{\mb{F}}_p} \bar{\rho} \hookrightarrow \widehat{H}^1_{LT, \bar{\mb{F}}_p} [\sigma _{\mf{m}} ^{\vee}] $$
\end{theo}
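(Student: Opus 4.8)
The plan is to assemble the statement from ingredients that are already in place: the $(\pi)$-localised comparison injection $(\widehat{H}^1_{\bar{\mb{F}}_p})_{(\pi)}\hookrightarrow\widehat{H}^1_{ss,\bar{\mb{F}}_p}$ (Theorem~4.6), Emerton's local--global isomorphism (Theorem~6.2), the new-vector result (Theorem~5.2), the fact that a supersingular $\pi$ is its own $\GL_2(\mb{Q}_p)$-block (Propositions~4.1--4.2), and the description $\widehat{H}^1_{ss,\bar{\mb{F}}_p}\simeq\left(\mbf{F}\otimes_{\bar{\mb{F}}_p}\widehat{H}^1_{LT,\bar{\mb{F}}_p}\right)^{D^{\times}(\mb{Q}_p)}$ from Section~6.1. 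First I would fix, exactly as in Section~6.3, an odd irreducible continuous $\bar{\rho}:G_{\mb{Q}}\ra\GL_2(\bar{\mb{F}}_p)$, unramified outside $\Sigma=\Sigma_0\cup\{p\}$, with $\bar{\rho}_{|G_{\mb{Q}_p}}=\bar{\rho}_p$ and satisfying the technical hypotheses required for Theorem~6.2; such a $\bar{\rho}$ exists by the cited results on reductions of Galois representations attached to modular forms. I would let $\mf{m}\subset\mb{T}(K_{\Sigma_0})$ be the associated maximal ideal, identified via Serre's correspondence with a maximal ideal of $\mb{T}^D(K_{\Sigma_0})$, apply Theorem~5.2 at each $l\in\Sigma_0$ to pick $K_{\mf{m},\Sigma_0}$ with $\pi_{\Sigma_0}(\bar{\rho})^{K_{\mf{m},\Sigma_0}}$ one-dimensional, and set $K_{\mf{m}}=K_{\mf{m},\Sigma_0}K^{\Sigma}$ and $\sigma_{\mf{m}}=\mbf{F}[\mf{m}]^{K_{\mf{m}}}$.

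Next I would apply to the injection of Theorem~4.6 the functor $V\mapsto(V[\mf{m}])^{K_{\mf{m}}}$. Since $K_{\mf{m}}$ and the Hecke operators cutting out $\mf{m}$ act at tame level, these operations commute with the $\GL_2(\mb{Q}_p)$-functor $(-)_{(\pi)}$; and since $[\mf{m}]$-torsion and $K_{\mf{m}}$-invariants are left exact while everything occurring is admissible, the output is still an injection. On the source, Theorem~6.2 gives $\widehat{H}^1_{\bar{\mb{F}}_p}[\mf{m}]^{K^{\Sigma}}\simeq\pi\otimes_{\bar{\mb{F}}_p}\pi_{\Sigma_0}(\bar{\rho})\otimes_{\bar{\mb{F}}_p}\bar{\rho}$; taking the extra $K_{\mf{m},\Sigma_0}$-invariants collapses the middle factor to a line, and as $\pi$ is supersingular the functor $(-)_{(\pi)}$ is the identity on it by Propositions~4.1--4.2, so $\left(\widehat{H}^1_{\bar{\mb{F}}_p}[\mf{m}]\right)^{K_{\mf{m}}}_{(\pi)}\simeq\pi\otimes_{\bar{\mb{F}}_p}\bar{\rho}$. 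On the target, $K_{\mf{m}}$ acts only through $\mbf{F}$, so the Section~6.1 identification together with the definition of $\sigma_{\mf{m}}$ gives $\left(\widehat{H}^1_{ss,\bar{\mb{F}}_p}[\mf{m}]\right)^{K_{\mf{m}}}\simeq\left(\sigma_{\mf{m}}\otimes_{\bar{\mb{F}}_p}\widehat{H}^1_{LT,\bar{\mb{F}}_p}\right)^{D^{\times}(\mb{Q}_p)}=:\widehat{H}^1_{LT,\bar{\mb{F}}_p}[\sigma_{\mf{m}}^{\vee}]$. Composing the injection with these identifications yields the claimed map $\pi\otimes_{\bar{\mb{F}}_p}\bar{\rho}\hookrightarrow\widehat{H}^1_{LT,\bar{\mb{F}}_p}[\sigma_{\mf{m}}^{\vee}]$.

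It then remains to check $\GL_2(\mb{Q}_p)\times G_{\mb{Q}_p}$-equivariance, which I would do by tracing both actions through every identification above. The $\GL_2(\mb{Q}_p)$-action causes no trouble, since Theorem~4.6, the functor $(-)_{(\pi)}$ and Emerton's isomorphism are all $\GL_2(\mb{Q}_p)$-equivariant and $K_{\mf{m}}$ and $\mf{m}$ act away from $p$; the $G_{\mb{Q}_p}$-action is carried on the source by the $\bar{\rho}$-factor of Theorem~6.2 and on the target by the \'etale cohomology $\widehat{H}^1_{LT,\bar{\mb{F}}_p}$ together with the Weil-group/descent structure recalled in Section~2.3, and one has to verify that these are intertwined by the comparison $\widehat{H}^1_{ss,\bar{\mb{F}}_p}\simeq\left(\mbf{F}\otimes_{\bar{\mb{F}}_p}\widehat{H}^1_{LT,\bar{\mb{F}}_p}\right)^{D^{\times}(\mb{Q}_p)}$. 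I expect the main obstacle to lie exactly here, together with the bookkeeping that justifies the commutation of localisation at $\pi$, passage to $\mf{m}$-torsion, and passage to $K_{\mf{m}}$-invariants, and their preservation of injectivity; the genuine mathematical content is packaged into Theorems~4.6, 5.2 and~6.2, which may be assumed.
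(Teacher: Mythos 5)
Your proposal is correct and follows essentially the same route as the paper: apply $[\mf{m}]$-torsion and $K_{\mf{m}}$-invariants (with $K_{\mf{m}}$ built from the new-vector result) to the localised injection $(\widehat{H}^1_{\bar{\mb{F}}_p})_{(\pi)}\hookrightarrow\widehat{H}^1_{ss,\bar{\mb{F}}_p}$, identify the source as $\pi\otimes_{\bar{\mb{F}}_p}\bar{\rho}$ via Emerton's local--global theorem, and identify the target as $\widehat{H}^1_{LT,\bar{\mb{F}}_p}[\sigma_{\mf{m}}^{\vee}]$ via the supersingular-tube decomposition and the definition of $\sigma_{\mf{m}}$. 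The commutation and equivariance points you flag are handled implicitly in the paper in exactly the way you describe.
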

We will strengthen this result after proving additional facts about $\sigma _{\mf{m}}$. It is also possible to obtain the analogous result in the p-adic setting. Details will appear elsewhere.

\subsection{The mod p Jacquet-Langlands correspondence}
We have defined above
$$ \sigma _{\mf{m}} = \mbf{F} [\mf{m}] ^{K_{\mf{m}}}$$
This is a mod p representation of $D ^{\times}(\mb{Q}_p)$ which is one of our candidates for the mod p Jacquet-Langlands correspondence we search for. We will analyse this representation more carefully in the next section, getting a result about its socle. The question we do not answer here is whether this local representation is independent of the Hecke ideal $\mf{m}$ and if yes, how to construct it by local means. We make a natural conjecture
\begin{conj} Let $\mf{m}$ and $\mf{m}'$ be two maximal ideals of the Hecke algebra, which correspond to Galois representations $\bar{\rho}$ and $\bar{\rho}'$ such that $\bar{\rho} _p \simeq \bar{\rho} ' _p$. Then we have a $D^{\times}(\mb{Q}_p)$-equivariant isomorphism
$$ \sigma _{\mf{m}} \simeq \sigma _{\mf{m}'}$$
\end{conj}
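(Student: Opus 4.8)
The plan is to deduce this from the mod $p$ local--global compatibility part of the Buzzard--Diamond--Jarvis conjecture (Conjecture 4.7 of \cite{bdj}, recalled in the introduction), which in the form relevant here asserts a $D^{\times}(\mb{A}_f)$-equivariant isomorphism
$$\mbf{F}[\mf{m}] \simeq \sigma^{\mathrm{loc}}(\bar{\rho}_p) \otimes _{\bar{\mb{F}}_p} \pi^p(\bar{\rho}),$$
where $\pi^p(\bar{\rho}) = \bigotimes_{l \neq p}' \pi_l(\bar{\rho})$ is the smooth representation of $\GL_2(\mb{A}_f^p) = D^{\times}(\mb{A}_f^p)$ attached to $\bar{\rho}$ by the modified mod $l$ Langlands correspondence of Emerton--Helm, and $\sigma^{\mathrm{loc}}(\bar{\rho}_p)$ is a smooth representation of $D^{\times}(\mb{Q}_p)$ depending \emph{only} on $\bar{\rho}_p = \bar{\rho}_{|G_{\mb{Q}_p}}$. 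Granting this, the conjecture is formal. First I would enlarge $\Sigma = \Sigma_0 \cup \{p\}$ so that it contains every prime at which $\bar{\rho}$ or $\bar{\rho}'$ is ramified; then $\mf{m}$ and $\mf{m}'$ both make sense as maximal ideals of $\mb{T}(K_{\Sigma_0})$ and both Galois representations are unramified outside $\Sigma$.

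Next I would apply $(-)^{K_{\mf{m}}}$ to the displayed isomorphism, where $K_{\mf{m}} = K_{\mf{m},\Sigma_0}K^{\Sigma}$. Since $K_{\mf{m}} \subset \GL_2(\mb{A}_f^p)$ acts trivially on the $D^{\times}(\mb{Q}_p)$-factor, this yields
$$\sigma_{\mf{m}} = \mbf{F}[\mf{m}]^{K_{\mf{m}}} \simeq \sigma^{\mathrm{loc}}(\bar{\rho}_p) \otimes _{\bar{\mb{F}}_p} \pi^p(\bar{\rho})^{K_{\mf{m}}}.$$
By the very choice of $K_{\mf{m},\Sigma_0}$ --- which is legitimate thanks to the mod $p$ analogue of Casselman's new-vector theorem proved in Section 5 --- the space $\pi_{\Sigma_0}(\bar{\rho})^{K_{\mf{m},\Sigma_0}}$ is one-dimensional, and for $l \notin \Sigma$ the local factor $\pi_l(\bar{\rho})$ is an unramified representation (here the point of passing to the \emph{modified} correspondence is that $\pi_l(\bar{\rho})$ is the full principal series rather than a twist of Steinberg, so it really has a spherical line), whence $\pi_l(\bar{\rho})^{\GL_2(\mb{Z}_l)}$ is one-dimensional. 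Therefore $\pi^p(\bar{\rho})^{K_{\mf{m}}}$ is one-dimensional and $\sigma_{\mf{m}} \simeq \sigma^{\mathrm{loc}}(\bar{\rho}_p)$ as $D^{\times}(\mb{Q}_p)$-representations. Running the same argument for $\mf{m}'$ and using the hypothesis $\bar{\rho}_p \simeq \bar{\rho}'_p$ gives $\sigma_{\mf{m}} \simeq \sigma^{\mathrm{loc}}(\bar{\rho}_p) = \sigma^{\mathrm{loc}}(\bar{\rho}'_p) \simeq \sigma_{\mf{m}'}$.

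The main obstacle is that the input isomorphism above is precisely the (still open) mod $p$ local--global compatibility at $p$; what one can hope to establish unconditionally is only the prime-to-$p$ statement, which already shows that $\sigma_{\mf{m}}$ is of the form $\sigma^{\mathrm{loc}}$ for some representation of $D^{\times}(\mb{Q}_p)$ but does not pin it down in terms of $\bar{\rho}_p$ alone. An unconditional attack would instead try to read off $\sigma_{\mf{m}}$ from the geometry: in the injection $\pi \otimes \bar{\rho}_p \hookrightarrow \widehat{H}^1_{LT, \bar{\mb{F}}_p}[\sigma_{\mf{m}}^{\vee}]$ of Theorem \ref{rough} the source depends only on $\bar{\rho}_p$ and the ambient space $\widehat{H}^1_{LT, \bar{\mb{F}}_p}$ carries no global information, so it would suffice to show that $\widehat{H}^1_{LT, \bar{\mb{F}}_p}$, as a representation of $\GL_2(\mb{Q}_p) \times D^{\times}(\mb{Q}_p) \times G_{\mb{Q}_p}$, recovers the $D^{\times}(\mb{Q}_p)$-multiplicity space of a given $\GL_2(\mb{Q}_p) \times G_{\mb{Q}_p}$-type --- a form of rigidity for mod $p$ non-abelian Lubin--Tate theory. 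However $\widehat{H}^1_{LT, \bar{\mb{F}}_p}$ is non-admissible and, in the absence of a Colmez-type functor for $D^{\times}(\mb{Q}_p)$, too poorly understood to make this precise; so for the moment the reduction to Buzzard--Diamond--Jarvis is the only route I can propose.
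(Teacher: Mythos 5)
Your reduction to the Buzzard--Diamond--Jarvis local--global compatibility conjecture is exactly the route the paper takes: the statement is left as a conjecture there, justified only by the remark that it follows from Conjecture 4.7 of \cite{bdj}, and your argument (enlarging $\Sigma$, taking $K_{\mf{m}}$-invariants, and using the new-vector result of Section 5 to see that $\pi^p(\bar{\rho})^{K_{\mf{m}}}$ is one-dimensional) correctly fills in that deduction. You are also right that this remains conditional, since the compatibility at $p$ is itself open, so no unconditional proof is claimed in the paper either.
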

This conjecture is natural in view of the fact that $\sigma _{\mf{m}}$ should play a role of the mod p Jacquet-Langlands correspondence and it should depend only on a local data. In fact, this conjecture follows from the local-global compatibility part of the Buzzard-Diamond-Jarvis conjecture (see Conjecture 4.7 in \cite{bdj})
\begin{conj} We have a $D^{\times}(\mb{A})$-equivariant isomorphism
$$\mbf{F}[\mf{m}] \simeq \sigma  \otimes \pi^p(\bar{\rho})$$
where $\sigma$ is a $D^{\times}(\mb{Q}_p)$-representation which depends only on $\bar{\rho}_p$, where $\bar{\rho}$ is the Galois representation associated to $\mf{m}$.
\end{conj}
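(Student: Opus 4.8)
We indicate a possible line of attack; since there is as yet no Colmez-type functor for $D^\times(\mb{Q}_p)$, what follows is a program rather than a proof, and its final step is genuinely open.

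The plan is to separate the two ramified places of $D$. Away from $p$ the algebra is split, so $D^\times(\mb{A}_f^p) \simeq \GL_2(\mb{A}_f^p)$, and the first step is to prove the quaternionic analogue of Emerton's decomposition (the Theorem of Section 6.2): invoking local-global compatibility modulo $l$ for the modified Langlands correspondence of \cite{eh} at each $l \neq p$, together with the irreducibility of $\bar{\rho}$, one expects $\mbf{F}[\mf{m}]$ to be isomorphic, as a smooth representation of $D^\times(\mb{Q}_p) \times \GL_2(\mb{A}_f^p)$, to $\sigma_{\mf{m}} \otimes_{\bar{\mb{F}}_p} \pi^p(\bar{\rho})$, where $\pi^p(\bar{\rho})$ is the restricted tensor product $\bigotimes_{l \neq p} \pi_l(\bar{\rho})$ and $\sigma_{\mf{m}}$ is by construction the residual $D^\times(\mb{Q}_p)$-factor, cut out at the tame level of new vectors provided by the new-vector theorem of Section 5. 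Granting multiplicity one away from $p$, this step is essentially formal and reduces the conjecture to the preceding one, namely that $\sigma_{\mf{m}}$ depends on $\mf{m}$ only through $\bar{\rho}_p$.

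For that local statement I would patch the spaces of mod $p$ algebraic modular forms on $D^\times$ by the Taylor--Wiles--Kisin method, in the style of the Caraiani--Emerton--Gee--Geraghty--Shin--Thorne construction, obtaining a patched module $M_\infty$ with a compatible $D^\times(\mb{Q}_p)$-action over a ring $R_\infty \simeq R_{\bar{\rho}_p}^{\square} \, \widehat{\otimes} \, R_\infty^p$ that is finite and projective over the power-series ring $R_\infty^p$ in the auxiliary and prime-to-$p$ framing variables. The decisive input is that $M_\infty$ be large enough to realize the mod $p$ Breuil--Mezard correspondence for $\GL_2(\mb{Q}_p)$ (accessible through the work of Kisin, Paskunas and Hu--Tan), so that $\sigma_{\mf{m}}$ is recovered, up to a prime-to-$p$ twist, as the fiber $M_\infty \otimes_{R_\infty} \bar{\mb{F}}_p$; as killing the prime-to-$p$ variables leaves $R_{\bar{\rho}_p}^{\square}/\mf{m}$, this exhibits $\sigma_{\mf{m}}$ as a functor of $\bar{\rho}_p$ alone. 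One would then match the resulting representation with the Breuil--Diamond representation of \cite{bd} defined via types, and, through Theorem \ref{rough}, with the local fundamental representation carved out of the Lubin--Tate tower, by comparing the two on a Zariski-dense set of de Rham points.

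The hard part --- and the reason this is only a conjecture --- is closing this loop. Patching by itself shows merely that $\sigma_{\mf{m}}$ is a functor of the patched module; identifying its isomorphism class intrinsically needs a minimal-level mod $p$ multiplicity-one result for quaternionic forms, which can genuinely fail, and more fundamentally a purely local construction of $\sigma(\bar{\rho}_p)$, which for $D^\times(\mb{Q}_p)$ does not yet exist. A more modest first target, within reach of the analysis of $\sigma_{\mf{m}}$ carried out in Section 7, is the socle-level assertion: that $\mathrm{soc}_{D^\times(\mb{Q}_p)} \sigma_{\mf{m}}$ depends only on $\bar{\rho}_p$ and is computed by the Buzzard--Diamond--Jarvis weight recipe of \cite{bdj}, which reduces to the known mod $p$ weight theorems for quaternionic modular forms.
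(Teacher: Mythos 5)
The statement you were asked to address is a \emph{conjecture} (the local-global compatibility piece of Buzzard--Diamond--Jarvis), and the paper offers no proof of it: the only indication of strategy in the text is a single remark that the conjecture \emph{would} follow if an analogue of the Colmez functor existed for $D^\times(\mb{Q}_p)$, after which Emerton's methods from \cite{em2} could be transported. Your proposal is therefore not comparable to any argument in the paper; what you give instead is a genuinely different program, via Taylor--Wiles--Kisin patching of quaternionic algebraic modular forms in the style of Caraiani--Emerton--Gee--Geraghty--Shin--Thorne, aiming to realize $\sigma_{\mf{m}}$ as the fiber of a patched module over the local framed deformation ring and thereby exhibit its dependence on $\bar{\rho}_p$ alone. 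That is a sensible and well-informed alternative route: the paper's remark defers to a functor that does not exist, whereas patching sidesteps the need for such a functor and has subsequently been the method that made real progress on exactly this kind of question. The trade-off, which you correctly flag, is that patching by itself only controls $\sigma_{\mf{m}}$ as a module over the patched ring; without a local construction of $\sigma(\bar{\rho}_p)$ or a strong multiplicity-one input, the patched module identifies the object only up to the ambiguities you name, so the loop does not close. You are also right that the first step (the factorization $\mbf{F}[\mf{m}] \simeq \sigma_{\mf{m}} \otimes \pi^p(\bar{\rho})$ away from $p$, granting multiplicity one) reduces the conjecture to the paper's Conjecture on independence of $\mf{m}$; the paper states that implication too. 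In short: you have not proved the conjecture (no one has), you correctly label the gap, and your proposed strategy is different from, and arguably more concrete than, the paper's one-line suggestion.
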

The conjecture of Buzzard-Diamond-Jarvis would be proved if one could show the existence of an analogue of the Colmez functor in the context of quaternion algebras. Then, the methods of Emerton from \cite{em2} could be applied to give a proof. We come back to the discussion of the mod p Jacquet-Langlands correspondence at the end of the next section.

\section{Representations of quaternion algebras: mod p theory}

In this section we analyse more carefully mod p representations of quaternion algebras, especially representations $\sigma _{\mf{m}}$ defined in the preceding section. We also define a naive mod p Jacquet-Langlands correspondence.

\subsection{Naive mod p Jacquet-Langlands correspondence} By the work of Vigneras (see \cite{vi}), we know that all irreducible representations of $D^{\times}$ are of dimension 1 or 2 and are either 

1) a character of $D^{\times}(\mb{Q}_p)$, or

2) are of the form $\Ind ^{D^{\times}} _{\mc{O} _{D} ^{\times} \mb{Q} _{p^2} ^{\times}} \alpha$ where $\alpha$ is a character of $\mb{Q} _{p^2} ^{\times}$. 

Let $\bar{\rho} _p$ be the mod p 2-dimensional irreducible Galois representation which corresponds to the supersingular representation $\pi$ of $\GL_2(\mb{Q}_p)$ by the mod p Local Langlands correspondence. As we have mentioned earlier, it is of the form $\Ind _{G_{\mb{Q} _{p^2}}} ^{G_{\mb{Q}_p}} (\omega _2 ^{r} \cdot \sgn) \otimes \chi$ where $\chi$ is a character and $r \in \{1, ..., p\}$. 

\begin{defi}
The naive mod p Jacquet-Langlands correspondence is
$$\Ind _{G_{\mb{Q} _{p^2}}} ^{G_{\mb{Q}_p}} (\omega _2 ^{r} \cdot \sgn) \otimes \chi \mapsto \Ind ^{D^{\times}} _{\mc{O} _{D} ^{\times} \mb{Q} _{p^2}} (\omega _2 ^{r}) \otimes \chi$$
where $\omega _2 ^r$ is treated as a character of $\mb{Q}_{p^2}$ by the local class field theory and $\chi$ is considered both as a character of $G_{\mb{Q}_p}$ and $D^{\times}(\mb{Q}_p)$. This gives a bijection between two-dimensional representations of $G_ {\mb{Q}_p}$ and two-dimensional representations of $D^{\times}(\mb{Q}_p)$. Similar correspondence holds for characters.
\end{defi}
We remark that one may also would like to call this the naive mod p Langlands correspondence for $D^{\times}(\mb{Q}_p)$. We get the Jacquet-Langlands correspondence in the usual sense, when we compose it with the mod p local Langlands correspondence for $\GL_2(\mb{Q}_p)$. 

Let $\alpha : \mb{Q} _{p^2} \ra \bar{\mb{F}}_p ^{\times}$ be a character. We denote by $\rho (\alpha)$ the representation of $G _{\mb{Q}_p}$ obtained by the local class field theory and an induction. We denote by $\sigma (\alpha)$ the $D^{\times}(\mb{Q}_p)$-representation $\Ind ^{D^{\times}} _{\mc{O} _{D} ^{\times} \mb{Q} _{p^2}} (\alpha)$. We remark that we also could define the naive mod p Jacquet-Langlands correspondence as
$$\rho (\alpha) \mapsto \sigma (\alpha)$$
but we have chosen our normalisation with a twist by $\sgn$ to have the same condition on determinants as for the classical l-adic Jacquet-Langlands correspondence.

\subsection{Quaternionic forms} Let $D$ be the quaternion algebra over $\mb{Q}$, ramified at $p$ and at $\infty$. Let $K$ be a finite extension of $\mb{Q}_p$ with ring of integers $\mc{O}$ and a uniformiser $\varpi$. Define
$$\mbf{F} = \varinjlim _{K} H^0(D^{\times}(\mb{Q})\backslash D^{\times}(\mb{A} _f)/K, \bar{\mb{F}} _p)$$
$$\mbf{F}_{\mc{O}} = \varinjlim _{K} H^0(D^{\times}(\mb{Q})\backslash D^{\times}(\mb{A} _f)/K, \mc{O})$$
Define also $\mbf{F} _{K} = \mbf{F} _{\mc{O}} \otimes _{\mc{O}} K$. We can make similar definitions for other $\mb{F}_p$ or $\mb{Z}_p$-algebras (for example for finite extensions of $\mb{F}_p$ or for $\bar{\mb{Z}}_p$ in $\mbf{F} _{\bar{\mb{Z}}_p}$ which we will use in the text).
 
Recall that we have fixed a finite set $\Sigma = \Sigma _0 \cup \{p\}$ and chosen an open, compact subgroup $K_{\Sigma _0}$ of $\prod _{l \in \Sigma _0} \GL_2(\mb{Z}_l)$. On each of the above spaces, after taking $K^{\Sigma}$-invariants, there is an action of the Hecke algebra $\mb{T}^D(K_{\Sigma _0})$. Recall also that we have defined $\bar{\rho} : G _{\mb{Q}} \ra \GL_2(\bar{\mb{F}} _p)$ an odd, irreducible, continuous representation unramified outside $\Sigma$ and we have denoted by $\mf{m}$ the maximal ideal in $\mb{T}(K_{\Sigma _0})$ (or in $\mb{T}^D(K_{\Sigma _0})$) which corresponds to $\bar{\rho}$. We write 
$$\bar{\rho} _{| G_{\mb{Q}_p}} = \rho (\alpha)$$ 
where $\alpha$ can be considered as a character of $\mb{Q} _{p^2} ^{\times}$ by the local class field theory.  

\begin{prop} Take an open, compact subgroup $K_p$ of $D^{\times}(\mb{Q}_p)$ and choose $K_{\Sigma _0}$ to be an open, compact subgroup of $\prod _{l \in \Sigma _0} \GL_2(\mb{Z}_l)$ such that $K_pK_{\Sigma _0} K^{\Sigma}$ is neat. Then $\mbf{F} _{\mf{m}} ^{K_{\Sigma _0} K^{\Sigma}}$ is injective as a smooth representation of $K_p$.
\end{prop}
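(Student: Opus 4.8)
The plan is to identify $\mbf{F}^{K_{\Sigma _0}K^{\Sigma}}$, viewed as a smooth $\bar{\mb{F}}_p$-representation of $K_p$, with a finite direct sum of copies of the regular representation $\mc{C}^{\infty}(K_p,\bar{\mb{F}}_p)$ of locally constant functions on $K_p$ (with $K_p$ acting by right translation), and then to realise $\mbf{F}_{\mf{m}}^{K_{\Sigma _0}K^{\Sigma}}$ as a $K_p$-equivariant direct summand of it. Injectivity will then follow because $\mc{C}^{\infty}(K_p,\bar{\mb{F}}_p)$ is injective in the category of smooth $\bar{\mb{F}}_p$-representations of $K_p$: for any smooth $V$, evaluation at the identity gives a natural isomorphism $\Hom_{K_p}(V,\mc{C}^{\infty}(K_p,\bar{\mb{F}}_p))\simeq\Hom_{\bar{\mb{F}}_p}(V,\bar{\mb{F}}_p)$, which is exact in $V$ (equivalently, $\mc{C}^{\infty}(K_p,\bar{\mb{F}}_p)$ is the Pontryagin dual of the projective pseudocompact algebra $\bar{\mb{F}}_p[[K_p]]$), and finite direct sums and direct summands of injectives are injective.

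The geometric step is the following. Set $Y=D^{\times}(\mb{Q})\backslash D^{\times}(\mb{A}_f)/K_{\Sigma _0}K^{\Sigma}$; this is a profinite set on which $K_p=D^{\times}(\mb{Q}_p)$ acts continuously on the right through its $p$-component, and $\mbf{F}^{K_{\Sigma _0}K^{\Sigma}}=\mc{C}^{\infty}(Y,\bar{\mb{F}}_p)$. First I would check that the stabiliser in $K_p$ of the class of $g\in D^{\times}(\mb{A}_f)$ is the conjugate by $g_p$ of $D^{\times}(\mb{Q})\cap g(K_pK_{\Sigma _0}K^{\Sigma})g^{-1}$, which is trivial by the neatness hypothesis; hence $K_p$ acts freely on $Y$. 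Since $D$ is a definite quaternion algebra over $\mb{Q}$, the quotient $Y/K_p=D^{\times}(\mb{Q})\backslash D^{\times}(\mb{A}_f)/K_pK_{\Sigma _0}K^{\Sigma}$ is a finite set, of cardinality $h$ say. Writing $Y=\varprojlim_n Y_n$ with $Y_n=D^{\times}(\mb{Q})\backslash D^{\times}(\mb{A}_f)/K_p^{(n)}K_{\Sigma _0}K^{\Sigma}$ for a neighbourhood basis of open normal subgroups $K_p^{(n)}$ of $K_p$, each finite group $K_p/K_p^{(n)}$ then acts freely on the finite set $Y_n$ with quotient $Y/K_p$; choosing sections compatibly identifies $Y\simeq(Y/K_p)\times K_p$ as $K_p$-sets, and therefore $\mbf{F}^{K_{\Sigma _0}K^{\Sigma}}\simeq\mc{C}^{\infty}(K_p,\bar{\mb{F}}_p)^{\oplus h}$ as smooth representations of $K_p$.

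Finally, I would deduce the $\mf{m}$-localised statement. The Hecke algebra $\mb{T}^D(K_{\Sigma _0})$ is generated by the operators $T_l$ and $S_l$ for $l\notin\Sigma$, all of which act on $\mbf{F}^{K_{\Sigma _0}K^{\Sigma}}$ commuting with the action of $D^{\times}(\mb{Q}_p)$, hence with $K_p$. At each finite level $K_p'K_{\Sigma _0}K^{\Sigma}$ with $K_p'$ open normal in $K_p$, the space $\mbf{F}^{K_p'K_{\Sigma _0}K^{\Sigma}}$ is finite-dimensional over $\bar{\mb{F}}_p$, so the image of $\mb{T}^D(K_{\Sigma _0})$ in its endomorphism ring is a finite product of local Artinian $\bar{\mb{F}}_p$-algebras, and the factor at $\mf{m}$ is cut out by an idempotent commuting with $K_p/K_p'$; passing to the colimit over $K_p'$ exhibits $\mbf{F}_{\mf{m}}^{K_{\Sigma _0}K^{\Sigma}}$ as a $K_p$-equivariant direct summand of $\mbf{F}^{K_{\Sigma _0}K^{\Sigma}}$, which is injective by the previous paragraph.

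The step I expect to demand the most care is the set-theoretic heart of the second paragraph: deducing from neatness that the $K_p$-action on the profinite set $Y$ is free, and then that a free continuous action with finite quotient makes $Y$ a trivial $K_p$-torsor over $Y/K_p$. Once this is in place, the identification with $\mc{C}^{\infty}(K_p,\bar{\mb{F}}_p)^{\oplus h}$, its injectivity, and the descent of injectivity to the $\mf{m}$-summand are all formal.
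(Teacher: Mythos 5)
Your proof is correct, and it reaches the conclusion by a somewhat different route than the paper. The paper tests injectivity against finitely generated smooth $K_p$-modules $M$: it attaches to $M$ a local system $\mc{M}$ on $D^{\times}(\mb{Q})\backslash D^{\times}(\mb{A}_f)/K_{\Sigma_0}K^{\Sigma}$, uses neatness (freeness of the $K_p$-action) to descend it to finite level, identifies $\Hom_{K_p}(M,\mbf{F}^{K_{\Sigma_0}K^{\Sigma}})$ with the space of $\mc{M}^{\vee}$-valued automorphic functions at level $K_pK_{\Sigma_0}K^{\Sigma}$, and concludes because sections over the finite class set form an exact functor (no $H^1$). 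You instead trivialise the $K_p$-torsor $Y\to Y/K_p$ directly, identify $\mbf{F}^{K_{\Sigma_0}K^{\Sigma}}\simeq \mc{C}^{\infty}(K_p,\bar{\mb{F}}_p)^{\oplus h}$, and invoke injectivity of the smooth regular representation via the adjunction $\Hom_{K_p}(V,\mc{C}^{\infty}(K_p,\bar{\mb{F}}_p))\simeq \Hom_{\bar{\mb{F}}_p}(V,\bar{\mb{F}}_p)$. The two arguments rest on exactly the same inputs (neatness forces $D^{\times}(\mb{Q})\cap g K_pK_{\Sigma_0}K^{\Sigma}g^{-1}=\{1\}$, hence a free action, and finiteness of the class set), and your displayed isomorphism is essentially the paper's computation with $M$ replaced by the regular representation; but your version is more self-contained, avoids the local-system/descent language, and — a genuine plus — explicitly carries out the passage from $\mbf{F}^{K_{\Sigma_0}K^{\Sigma}}$ to $\mbf{F}_{\mf{m}}^{K_{\Sigma_0}K^{\Sigma}}$ as a $K_p$-equivariant direct summand cut out by Hecke idempotents at finite level, a step the paper's proof leaves implicit. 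One simplification for the step you flag as delicate: since $Y/K_p$ is finite discrete and each fibre of $Y\to Y/K_p$ is a single free $K_p$-orbit, you need not choose sections compatibly through the tower; picking one point in each fibre gives an orbit map $K_p\to$ fibre which is a continuous bijection from a compact space to a Hausdorff space, hence a $K_p$-equivariant homeomorphism, and the trivialisation follows at once.
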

We do not define the notion of neatness for which we refer to section 0.6 in \cite{pi}. We only need this condition to ensure that $K_p$ acts freely as in the proof below. Any sufficiently small open compact subgroup is neat.
\begin{proof}
Let $M$ be any smooth finitely generated representation of $K_p$. We have 
$$\mbf{F} ^{K_{\Sigma _0} K^{\Sigma}}=\varinjlim _{K_p '} \mbf{F} ^{K_p ' K_{\Sigma _0} K^{\Sigma}}$$
where $K_p ' \subset K_p$ runs over sufficiently small, normal open subgroups of $\mc{O} _D ^{\times}$, so that $K_p '$ acts trivially on $M$. We can associate to $M$ a local system $\mc{M}$ on $D^{\times}(\mb{Q})\backslash D^{\times} (\mb{A} _f) / K_{\Sigma _0} K^{\Sigma}$. Because $K_p $ acts freely on $D^{\times}(\mb{Q})\backslash D^{\times} (\mb{A} _f) / K_{\Sigma _0} K^{\Sigma}$ by the assumption of neatness, we can descend this system to each $D^{\times}(\mb{Q})\backslash D^{\times} (\mb{A} _f) / K_p' K_{\Sigma _0} K^{\Sigma}$, where $K_p'$ is as above. Moreover on each $D^{\times}(\mb{Q}) \backslash D^{\times} (\mb{A} _f) / K_p 'K_{\Sigma _0} K^{\Sigma}$, $\mc{M}$ is a constant local system and hence:
$$\Hom _{K_p} (M, \mbf{F} ^{K_{\Sigma _0} K^{\Sigma}}) \simeq \varinjlim _{K_p'} \Hom _{K_p}(M, \mbf{F} ^{K_p'K_{\Sigma _0} K^{\Sigma}}) \simeq \varinjlim _{K_p'}(\mbf{F} ^{K_p'K_{\Sigma _0} K^{\Sigma}}(\mc{M} ^{\vee})) ^{K_p} \simeq \mbf{F} ^{K_p ^{\times} K_{\Sigma _0} K^{\Sigma}} (\mc{M} ^{\vee})$$
where $\mbf{F}(\mc{M} ^{\vee}) = H^0(D^{\times}(\mb{Q}) \backslash D^{\times} (\mb{A} _f), \mc{M} ^{\vee})$. Because $\mbf{F} ^{K_p K_{\Sigma _0} K^{\Sigma}} (\mc{M} ^{\vee})$ is an exact functor (there is no $H^1$), we get the result.
\end{proof}

We will now start to analyse socles of quaternionic forms $\mbf{F} _{\mf{m}} ^{K_{\Sigma _0} K^{\Sigma}}$. Let us start with the following lemma:
\begin{lemm} Let $\beta$ be a finite dimensional $\bar{\mb{F}}_p$-representation of $\mc{O} _D ^{\times}$. We have $\Hom _{\mc{O} _{D} ^{\times}} (\beta ^{\vee},  \mbf{F} ^{K^p} _{\mf{m}}) \simeq \mbf{F} ^{K^p} _{\mf{m}} \{\beta \}$, where $\mbf{F} ^{K^p} _{\mf{m}} \{\beta \}$ is the space of automorphic functions $D(\mb{Q}) \backslash D(\mb{A}_f) / K^p \ra \beta$.
\end{lemm}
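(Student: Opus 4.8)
The plan is to reduce the statement to the tautology that sections of the local system attached to $\beta$ are computed by taking $\mc{O}_D^\times$-invariants of $\beta$-valued automorphic functions, using tensor-hom duality in the (profinite) compact group $\mc{O}_D^\times$. First I would record the purely formal isomorphism: for any smooth $\bar{\mb{F}}_p[\mc{O}_D^\times]$-module $W$ and any finite-dimensional $\beta$, the canonical pairing gives an isomorphism of $\mc{O}_D^\times$-representations $\Hom_{\bar{\mb{F}}_p}(\beta^\vee, W)\simeq \beta\otimes_{\bar{\mb{F}}_p}W$ (here $\beta^{\vee\vee}=\beta$ because $\dim\beta<\infty$); taking $\mc{O}_D^\times$-invariants of both sides yields
$$\Hom_{\mc{O}_D^\times}(\beta^\vee, W)\simeq (\beta\otimes_{\bar{\mb{F}}_p}W)^{\mc{O}_D^\times}.$$
Since $\beta^\vee$ is finite-dimensional and hence smooth, any $\mc{O}_D^\times$-equivariant map out of it lands automatically in the smooth part of $W$, so this really computes the $\Hom$ in $\mc{O}_D^\times$-representations. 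I apply this with $W=\mbf{F}^{K^p}_{\mf{m}}$, which is a smooth representation of $D^\times(\mb{Q}_p)$ by construction, restricted to the compact open subgroup $\mc{O}_D^\times$.

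Next I identify $\beta\otimes_{\bar{\mb{F}}_p}\mbf{F}^{K^p}_{\mf{m}}$ with the $\mf{m}$-part of the space of locally constant functions $D^\times(\mb{Q})\backslash D^\times(\mb{A}_f)/K^p\ra\beta$. This is immediate: $\mbf{F}^{K^p}$ is by definition a space of $\bar{\mb{F}}_p$-valued locally constant functions on the discrete double coset space, so tensoring with the finite-dimensional $\beta$ gives exactly the $\beta$-valued locally constant functions; and the Hecke algebra $\mb{T}^D(K_{\Sigma_0})$ acts only through the functions via correspondences at places prime to $p$, hence commutes both with the $\mc{O}_D^\times$-action at $p$ and with $\otimes\beta$, so passing to the $\mf{m}$-torsion commutes with $\otimes\beta$. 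Finally, spelling out the diagonal $\mc{O}_D^\times$-action — right translation at the place $p$ on the function side, the representation $\beta$ on the coefficients — an element $F$ is invariant precisely when $F(gu)=u^{-1}\cdot F(g)$ for all $u\in\mc{O}_D^\times$, i.e.\ $F$ is a section of the local system associated to $\beta$ over the relevant cover of $D^\times(\mb{Q})\backslash D^\times(\mb{A}_f)/K^p\mc{O}_D^\times$. By definition this space of sections is exactly $\mbf{F}^{K^p}_{\mf{m}}\{\beta\}$, and composing the three identifications gives the claim.

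The argument is essentially formal, so there is no real obstacle; the only points requiring care are the bookkeeping of the three commuting structures (right translation by $\mc{O}_D^\times$ at $p$, the prime-to-$\Sigma$ Hecke action defining $\mf{m}$, and the fixed tame level $K^p$) and the normalization convention implicit in the definition of $\mbf{F}^{K^p}_{\mf{m}}\{\beta\}$ — whether the twist is by $u$ or $u^{-1}$ — which pins down the direction of the isomorphism. Commuting the direct limit over levels that defines $\mbf{F}$ past $\Hom$ and past $(-)^{\mc{O}_D^\times}$ is harmless because $\beta^\vee$ is finite-dimensional.
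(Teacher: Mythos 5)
Your proof is correct, and it fills in exactly the argument that the paper delegates to the citation of Lemma 7.4.3 in Emerton--Gee--Herzig: the isomorphism there is precisely this tensor-hom/Frobenius-reciprocity identification $\Hom_{\mc{O}_D^\times}(\beta^\vee, W)\simeq(\beta\otimes W)^{\mc{O}_D^\times}$, applied to $W=\mbf{F}^{K^p}_{\mf{m}}$ and then unwound into the equivariance condition $F(gu)=u^{-1}\cdot F(g)$ defining $\mbf{F}^{K^p}_{\mf{m}}\{\beta\}$. Since the paper's proof is just a pointer, your spelled-out version is a faithful and somewhat more self-contained rendering of the same argument.
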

\begin{proof} The isomorphism is given by an explicit map. See Lemma 7.4.3 in \cite{egh}.
\end{proof}

\begin{prop} The only irreducible $\bar{\mb{F}}_p$-representations of $D^{\times}(\mb{Q}_p)$ which appear as submodules in $\mbf{F} _{\mf{m}} ^{K_{\Sigma _0} K^{\Sigma}}$ are isomorphic to $\sigma ^{\vee} = \sigma (\alpha) ^{\vee}$.
\end{prop}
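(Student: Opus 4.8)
The plan is to reduce the statement to a computation of $D^\times(\mb{Q}_p)$-socles via the explicit description of automorphic functions provided by Lemma 7.4, and then to invoke the local-global structure coming from Emerton's completed cohomology together with the irreducibility of $\bar{\rho}$. First I would let $\tau$ be an irreducible $\bar{\mb{F}}_p$-representation of $D^\times(\mb{Q}_p)$ occurring as a submodule of $\mbf{F}_{\mf{m}}^{K_{\Sigma_0}K^\Sigma}$; since $D^\times(\mb{Q}_p)/\mb{Q}_p^\times$ is compact, $\tau$ is finite-dimensional, and by Vigneras' classification (recalled in 7.1) it is either a character or a two-dimensional induced representation $\sigma(\alpha') = \Ind^{D^\times}_{\mc{O}_D^\times \mb{Q}_{p^2}^\times}\alpha'$. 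Restricting $\tau$ to the compact group $\mc{O}_D^\times$, choose $\beta = \tau^\vee$ as a representation of $\mc{O}_D^\times$; then Lemma 7.4 identifies $\Hom_{\mc{O}_D^\times}(\tau, \mbf{F}^{K^p}_{\mf{m}})$ with the space $\mbf{F}^{K^p}_{\mf{m}}\{\tau^\vee\}$ of $\tau^\vee$-valued automorphic forms, so a $D^\times(\mb{Q}_p)$-embedding of $\tau$ produces in particular a nonzero such automorphic form on which the Hecke algebra acts through $\mf{m}$.

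Next I would bring in the constraint at $p$. The maximal ideal $\mf{m}$ corresponds by construction to $\bar{\rho}$ with $\bar{\rho}_{|G_{\mb{Q}_p}} = \rho(\alpha)$, and the whole point of passing to $\mbf{F}_{\mf{m}}$ rather than $\mbf{F}$ is that the Galois-theoretic data at $p$ is pinned down. The key step is to show that the only irreducible $D^\times(\mb{Q}_p)$-type that can support a nonzero $\mf{m}$-eigenform is the one matching $\alpha$ under the naive mod $p$ Jacquet--Langlands correspondence of Definition 7.1 — that is, $\sigma(\alpha)$ — and hence that the submodule is $\sigma(\alpha)^\vee$. The cleanest route is to use the Serre-type comparison of Hecke eigensystems recalled in Section 6.3 (the letter of Serre \cite{se}), which matches systems of eigenvalues on $\mbf{F}$ with those coming from mod $p$ modular forms, together with the local-global compatibility at $p$ for the mod $p$ correspondence: an $\mf{m}$-eigenclass forces the local component at $p$ of the relevant automorphic representation to be the Jacquet--Langlands transfer of $\pi(\bar{\rho}_p)$, which on the $D^\times$ side is exactly $\sigma(\alpha)$. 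Concretely one can argue with the $U_p$- or $\mc{O}_D^\times$-action: the central character and the action of $\mb{Q}_{p^2}^\times$ (via the embedding $\mb{Q}_{p^2}^\times \hookrightarrow D^\times$) on any $\mf{m}$-eigenvector are determined by $\alpha$, which rules out characters (they would force $\bar{\rho}_{|G_{\mb{Q}_p}}$ reducible, contradicting irreducibility of $\bar{\rho}_p$) and rules out $\sigma(\alpha')$ for $\alpha'$ not conjugate to $\alpha$.

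The main obstacle I anticipate is making the last matching step precise purely in the mod $p$ setting, where one does not yet have a Colmez-type functor for $D^\times$ (the paper itself flags this in Section 6.4). Unlike the $l$-adic case, one cannot simply cite classical Jacquet--Langlands; instead I would have to extract the needed compatibility from Emerton's Theorem 6.3 by a dévissage: lift to characteristic zero using $\mbf{F}_{\bar{\mb{Z}}_p}$ and classical quaternionic forms, apply the classical Jacquet--Langlands correspondence there to see that the automorphic representations contributing to $\mf{m}$ have local component at $p$ of the expected type, and then reduce modulo $p$, controlling the reduction of the local type $\sigma(\alpha)$. An alternative, perhaps lighter, approach is to avoid lifting and instead use directly that $\mbf{F}_{\mf{m}}^{K_{\Sigma_0}K^\Sigma}$ is injective as a smooth $K_p$-representation (Proposition 7.3): injectivity means its socle is determined by which irreducible types embed, and combined with the known $\GL_2(\mb{Q}_p)$-side description $\widehat{H}^1_{\bar{\mb{F}}_p}[\mf{m}]^{K^\Sigma} \simeq \pi \otimes \pi_{\Sigma_0}(\bar{\rho}) \otimes \bar{\rho}$ from Theorem 6.3 and the comparison isomorphism \eqref{eq: H1}, one can transfer the rigidity of the $\GL_2$-socle across the identification to conclude that the $D^\times(\mb{Q}_p)$-socle is forced to be $\sigma(\alpha)^\vee$. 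Either way, the crux is the same: translating the Galois condition encoded by $\mf{m}$ into a statement about which irreducible representation of $D^\times(\mb{Q}_p)$ is allowed.
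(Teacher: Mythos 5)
Your reduction via Lemma 7.4 to studying the $\mc{O}_D^\times$-socle of $\mbf{F}_{\mf{m}}^{K^p}$ matches the paper's starting point, and your instinct that the Galois data at $p$ encoded by $\mf{m}$ must pin down the allowed irreducible type is correct. However, the decisive step is missing: the paper does not go through either characteristic-zero lifting and classical Jacquet--Langlands, or through Proposition 7.3 and the comparison isomorphism \eqref{eq: H1}. Instead it identifies the $\mc{O}_D^\times$-socle with the duals of the \emph{Serre weights} of $\bar{\rho}$ (this is exactly the content of "$\bar{\rho}$ is modular of weight $\beta$" combined with Lemma 7.4), and then invokes Theorem 7 of \cite{kh2} to conclude that, since $\bar{\rho}_{|G_{\mb{Q}_p}} = \rho(\alpha)$ is irreducible, the only Serre weights are $\alpha$ and $\alpha^p$. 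Because $\sigma(\alpha)$ restricted to $\mc{O}_D^\times$ is precisely $\alpha \oplus \alpha^p$, the $D^\times(\mb{Q}_p)$-socle is forced to be $\sigma(\alpha)^\vee$.

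Your two proposed substitutes for this step have real gaps. The characteristic-zero lifting route is not a shortcut: after applying classical Jacquet--Langlands and reducing mod $p$, you still need to know which mod $p$ weights can occur on the quaternion side for a given $\bar{\rho}$, which is exactly the Serre weight question you were trying to avoid. The second route does not quite close either: the isomorphism \eqref{eq: H1} expresses $\widehat{H}^1_{ss,c}$ as $D^\times(\mb{Q}_p)$-invariants of a tensor product $\mbf{F}\otimes \widehat{H}^1_{LT,c}$, so there is no direct way to read off the $D^\times(\mb{Q}_p)$-socle of the $\mbf{F}$ factor from the $\GL_2$-side structure given by Emerton's theorem; injectivity of $\mbf{F}_{\mf{m}}$ as a $K_p$-module tells you the representation is determined by its socle, but does not tell you what the socle is. In short, the proposal correctly frames the problem as a socle computation controlled by local data at $p$, but it is missing the key citation — the determination of Serre weights for irreducible $\bar{\rho}_p$ — that makes the paper's argument a two-line deduction.
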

\begin{proof}
Observe that the only irreducible $\bar{\mb{F}}_p$-representations of $\mc{O}_D ^{\times}$ which can appear in the $\mc{O} _D ^{\times}$-socle of $\mbf{F} ^{K^p} _{\mf{m}}$ are duals of the Serre weights of $\bar{\rho}$. This follows from the lemma above and the definition of being modular, i.e. $\bar{\rho}$ is modular of weight $\beta$ (where $\beta$ is a representation of $\mc{O} _{D} ^{\times}$) if and only if there exists an open compact subset $U$ of $D^{\times}(\mb{A}_f)$ such that $\mbf{F} ^{U} _{\mf{m}} \{\beta \} \not = 0$. By the lemma, this is equivalent to $\Hom _{\mc{O} _{D} ^{\times}} (\beta ^{\vee},  \mbf{F} ^{U} _{\mf{m}}) \not = 0$ which holds if and only if $\beta ^{\vee} \in soc _{\mc{O} _D ^{\times}} \mbf{F} ^{U} _{\mf{m}}$. Now the result follows from Theorem 7 in \cite{kh2}, as the only possible weights which can appear in the socle are $\alpha ^{\vee}$ and $(\alpha ^p)^{\vee}$. Hence the $D^{\times}(\mb{Q}_p)$-socle contains only $\sigma(\alpha) ^{\vee}$. 
\end{proof}
As a corollary we also get the $[\mf{m}]$-isotypic analogue of the above
\begin{coro}
The only irreducible representations which appear as submodules in $\mbf{F} ^{K_{\Sigma _0} K^{\Sigma}} [\mf{m}]$ are isomorphic to $\sigma ^{\vee} = \sigma (\alpha) ^{\vee}$.
\end{coro}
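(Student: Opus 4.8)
The plan is to deduce this corollary immediately from the preceding proposition; the only additional ingredient is the elementary fact that the $\mf{m}$-torsion of a module over the (commutative) Hecke algebra embeds $D^{\times}(\mb{Q}_p)$-equivariantly into its localisation at $\mf{m}$.

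I would begin by writing $M = \mbf{F} ^{K_{\Sigma _0} K^{\Sigma}}$, which carries an action of $\mb{T}^D(K_{\Sigma _0})$ commuting with the smooth action of $D^{\times}(\mb{Q}_p)$ (the Hecke operators $T_l, S_l$ live at the places $l \notin \Sigma$, the group $D^{\times}(\mb{Q}_p)$ at $p$). The submodule $M[\mf{m}]$ of elements annihilated by $\mf{m}$ is a vector space over the residue field $\mb{T}^D(K_{\Sigma _0})/\mf{m}$, so every element of $\mb{T}^D(K_{\Sigma _0}) \setminus \mf{m}$ acts invertibly on it; hence the canonical map $M[\mf{m}] \to M_{\mf{m}}$ is injective (equivalently $M[\mf{m}] = (M[\mf{m}])_{\mf{m}} \hookrightarrow M_{\mf{m}}$ by exactness of localisation). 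Since this map is $\mb{T}^D(K_{\Sigma _0})$-linear and the $D^{\times}(\mb{Q}_p)$-action commutes with that of $\mb{T}^D(K_{\Sigma _0})$, it is an injection of $D^{\times}(\mb{Q}_p)$-representations
$$\mbf{F} ^{K_{\Sigma _0} K^{\Sigma}}[\mf{m}] \hookrightarrow \mbf{F} _{\mf{m}} ^{K_{\Sigma _0} K^{\Sigma}}.$$

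Consequently, any irreducible $\bar{\mb{F}}_p$-subrepresentation of $D^{\times}(\mb{Q}_p)$ occurring in $\mbf{F} ^{K_{\Sigma _0} K^{\Sigma}}[\mf{m}]$ occurs, via this injection, also in $\mbf{F} _{\mf{m}} ^{K_{\Sigma _0} K^{\Sigma}}$, and is therefore isomorphic to $\sigma(\alpha)^{\vee}$ by the preceding proposition; this is precisely the claim. There is no real obstacle: the entire arithmetic content — that only the Serre weights $\alpha^{\vee}$ and $(\alpha^p)^{\vee}$ occur in the $\mc{O}_D^{\times}$-socle, hence only $\sigma(\alpha)^{\vee}$ in the $D^{\times}(\mb{Q}_p)$-socle — is already packaged into the preceding proposition via \cite{egh} and \cite{kh2}, and the only point to check is the displayed embedding, which is the purely formal statement above.
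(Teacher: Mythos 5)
Your proof is correct and fills in exactly the step the paper leaves implicit — the paper simply asserts the corollary as an immediate consequence of the preceding proposition, and the only content needed is the $D^{\times}(\mb{Q}_p)$-equivariant embedding $\mbf{F}^{K_{\Sigma_0}K^{\Sigma}}[\mf{m}] \hookrightarrow \mbf{F}_{\mf{m}}^{K_{\Sigma_0}K^{\Sigma}}$, which you establish by the standard commutative-algebra observation that elements outside $\mf{m}$ act invertibly on the $\mf{m}$-torsion. This is the same (implicit) route the paper takes.
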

We are now ready to strengthen the theorem which has appeared before
\begin{theo}
The representation $\sigma \otimes \pi \otimes \bar{\rho}$ appears as a subquotient in $\widehat{H}^1 _{LT, \bar{\mb{F}}_p}$
\end{theo}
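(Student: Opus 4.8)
The plan is to combine the injection of Theorem~\ref{rough} with the socle computation for $\sigma_{\mf{m}}$ just obtained, and then to conclude by an irreducibility argument. Recall that Theorem~\ref{rough} provides a $\GL_2(\mb{Q}_p)\times G_{\mb{Q}_p}$-equivariant injection $\pi\otimes_{\bar{\mb{F}}_p}\bar{\rho}\hookrightarrow\widehat{H}^1_{LT,\bar{\mb{F}}_p}[\sigma_{\mf{m}}^{\vee}]=(\sigma_{\mf{m}}\otimes_{\bar{\mb{F}}_p}V)^{D^{\times}(\mb{Q}_p)}$, where I write $V=\widehat{H}^1_{LT,\bar{\mb{F}}_p}$, a smooth $\GL_2(\mb{Q}_p)\times D^{\times}(\mb{Q}_p)\times G_{\mb{Q}_p}$-representation.

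First I would reduce to a finite length piece of $\sigma_{\mf{m}}$. Since each $\mbf{F}^{K}$ is finite dimensional ($D$ being definite), $\sigma_{\mf{m}}$ is admissible, hence locally finite, over $D^{\times}(\mb{Q}_p)$, so it is the filtered union of its finite length subrepresentations $W$; as $(-)\otimes V$ commutes with all colimits, $(-)^{D^{\times}(\mb{Q}_p)}$ with filtered colimits of smooth representations, and $\pi\otimes\bar{\rho}$ is finitely generated over $\GL_2(\mb{Q}_p)\times G_{\mb{Q}_p}$, the injection factors through $(W\otimes V)^{D^{\times}(\mb{Q}_p)}$ for some finite length $D^{\times}(\mb{Q}_p)$-subrepresentation $W\subseteq\sigma_{\mf{m}}$. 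Choosing a composition series of $W$ and using left exactness of $D^{\times}(\mb{Q}_p)$-invariants, $(W\otimes V)^{D^{\times}(\mb{Q}_p)}$ gets a finite filtration with graded pieces embedding into $(\tau\otimes V)^{D^{\times}(\mb{Q}_p)}$ for $\tau$ an irreducible constituent of $W$; since $\pi\otimes\bar{\rho}$ is irreducible over $\GL_2(\mb{Q}_p)\times G_{\mb{Q}_p}$, it embeds into one such $(\tau\otimes V)^{D^{\times}(\mb{Q}_p)}$. Running an induction along the socle filtration of $\sigma_{\mf{m}}$, one arranges $\tau$ to lie in $\mathrm{soc}_{D^{\times}(\mb{Q}_p)}(\sigma_{\mf{m}})$, which by the preceding Proposition and Corollary is $\sigma^{\vee}$-isotypic; hence $\tau\cong\sigma^{\vee}=\sigma(\alpha)^{\vee}$.

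Now, $\sigma$ being two dimensional, there is a canonical identification $(\sigma^{\vee}\otimes V)^{D^{\times}(\mb{Q}_p)}\cong\Hom_{D^{\times}(\mb{Q}_p)}(\sigma,V)$, so the previous step yields a nonzero $\GL_2(\mb{Q}_p)\times D^{\times}(\mb{Q}_p)\times G_{\mb{Q}_p}$-equivariant map $\Psi:\sigma\otimes_{\bar{\mb{F}}_p}\pi\otimes_{\bar{\mb{F}}_p}\bar{\rho}\to V$, the three groups acting only on $\sigma$, on $\pi$, on $\bar{\rho}$ respectively. Since $\pi$, $\sigma$, $\bar{\rho}$ are absolutely irreducible over the commuting groups $\GL_2(\mb{Q}_p)$, $D^{\times}(\mb{Q}_p)$, $G_{\mb{Q}_p}$, the tensor product $\sigma\otimes\pi\otimes\bar{\rho}$ is irreducible, so $\Psi\neq 0$ forces $\Psi$ to be injective; thus $\sigma\otimes\pi\otimes\bar{\rho}$ appears even as a subrepresentation of $\widehat{H}^1_{LT,\bar{\mb{F}}_p}$, a fortiori as a subquotient.

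The hard part is the reduction in the second paragraph that the constituent $\tau$ over which the fixed copy of $\pi\otimes\bar{\rho}$ lives is forced to be $\sigma^{\vee}$: the socle computation controls only the bottom layer of $\sigma_{\mf{m}}$, so one must either carry out the socle-filtration induction carefully — showing the image of $\pi\otimes\bar{\rho}$ necessarily meets $(\mathrm{soc}_{D^{\times}(\mb{Q}_p)}(\sigma_{\mf{m}})\otimes V)^{D^{\times}(\mb{Q}_p)}$ — or strengthen the computation to the statement that every $D^{\times}(\mb{Q}_p)$-constituent of $\sigma_{\mf{m}}$ is isomorphic to $\sigma^{\vee}$, which is where the admissibility of $\sigma_{\mf{m}}$ and the full strength of the Serre-weight computation (via Khare's theorem) must enter. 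Everything else is formal.
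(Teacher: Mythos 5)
There is a genuine gap, and you have correctly located it yourself: your reduction needs the irreducible constituent $\tau$ of $W\subseteq\sigma_{\mf{m}}$ over which the copy of $\pi\otimes\bar{\rho}$ sits to be isomorphic to $\sigma^{\vee}$, and neither of your two proposed repairs is available. The filtration $(W_i\otimes V)^{D^{\times}(\mb{Q}_p)}$ you build (with $V=\widehat{H}^1_{LT,\bar{\mb{F}}_p}$) only shows that $\pi\otimes\bar{\rho}$ injects into $(\tau\otimes V)^{D^{\times}(\mb{Q}_p)}$ for \emph{some} Jordan--H\"older factor $\tau$ of $W$; since invariants are merely left exact, nothing forces the image to meet $(\mathrm{soc}_{D^{\times}(\mb{Q}_p)}(\sigma_{\mf{m}})\otimes V)^{D^{\times}(\mb{Q}_p)}$, and the image can perfectly well inject into a higher graded piece. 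The alternative strengthening, that every constituent of $\sigma_{\mf{m}}$ is $\sigma^{\vee}$, is not something the Serre-weight input (Khare's theorem, via the Proposition on socles) gives: it controls only the socle, and $\sigma_{\mf{m}}$ is of infinite length by the Breuil--Diamond argument, so one expects many other constituents. So as written the argument proves only that $\tau^{\vee}\otimes\pi\otimes\bar{\rho}$ embeds in $V$ for some unidentified constituent $\tau$, not the statement of the theorem.

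The difficulty is an artifact of aiming at a subrepresentation, which is more than the theorem asserts; for the subquotient statement the paper's dual formulation uses exactly, and only, the socle computation. Concretely: keep your reduction to a finite-dimensional $D^{\times}(\mb{Q}_p)$-subrepresentation $W\subseteq\sigma_{\mf{m}}$ with $\pi\otimes\bar{\rho}\hookrightarrow(W\otimes V)^{D^{\times}(\mb{Q}_p)}$, then evaluate: one gets a nonzero $D^{\times}(\mb{Q}_p)\times\GL_2(\mb{Q}_p)\times G_{\mb{Q}_p}$-equivariant map $W^{\vee}\otimes\pi\otimes\bar{\rho}\ra V$. Since $\pi\otimes\bar{\rho}$ is irreducible with scalar endomorphisms, the kernel has the form $N\otimes\pi\otimes\bar{\rho}$, so the image is $Q\otimes\pi\otimes\bar{\rho}$ with $Q$ a nonzero finite-dimensional quotient of $W^{\vee}$. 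Any irreducible quotient of $W^{\vee}$ is the dual of an irreducible submodule of $W\subseteq\sigma_{\mf{m}}$, hence isomorphic to $\sigma$ by the Proposition and its Corollary; therefore $\sigma\otimes\pi\otimes\bar{\rho}$ is a quotient of the subrepresentation $Q\otimes\pi\otimes\bar{\rho}\subseteq V$, i.e.\ a subquotient of $\widehat{H}^1_{LT,\bar{\mb{F}}_p}$, which is the assertion. This is precisely the paper's one-line proof (``the only irreducible quotient of $\sigma_{\mf{m}}^{\vee}$ is $\sigma$''), for which cosocle of the dual equals dual of the socle; working with subobjects of $\sigma_{\mf{m}}$ instead of quotients of its dual is what makes your induction demand information you do not have. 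The subrepresentation refinement you announce at the end should be dropped or stated as a question.
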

\begin{proof}
This follows from 
$$\pi \otimes \bar{\rho} \hookrightarrow \widehat{H}^1 _{LT, \bar{\mb{F}}_p}[\sigma _{\mf{m}} ^{\vee}]$$
and the fact that the only irreducible $D^{\times}(\mb{Q}_p)$-representation which appears as a quotient of $\sigma _{\mf{m}} ^{\vee}$ is $\sigma$.
\end{proof} 

We remark that if
$$n=\dim _{\bar{\mb{F}} _p} \Hom _{D^{\times}(\mb{Q}_p)} (\sigma (\alpha) ^{\vee}, \mbf{F} ^{K_{\Sigma _0} K^{\Sigma}} [\mf{m}])$$
then one conjectures that $n=1$ (even in the more general setting, see Section 8 of \cite{br3}). 

Before moving further, let us recall a structure theorem of Breuil and Diamond for our $D^{\times}(\mb{Q}_p)$-representations, which shows that our candidate for the mod p Jacquet-Langlands correspondence defined above is of entirely different nature than the one with complex coefficients. 
\begin{prop}
The $D^{\times}(\mb{Q}_p)$-representation $\mbf{F} ^{K_{\Sigma _0} K^{\Sigma}} [\mf{m}]$ is of infinite length.
\end{prop}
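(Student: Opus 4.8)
The plan is to prove that $\mbf{F}^{K_{\Sigma_0}K^\Sigma}[\mf{m}]$ is infinite-dimensional over $\bar{\mb{F}}_p$. This suffices: by the classification of Vigneras recalled at the start of this section, every irreducible smooth $\bar{\mb{F}}_p$-representation of $D^\times(\mb{Q}_p)$ has dimension $1$ or $2$, so a smooth representation of finite length is necessarily finite-dimensional.

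First I would fix a sufficiently small open normal pro-$p$ subgroup $K_p \subset \mc{O}_D^\times$ (for instance $K_p = 1 + \varpi_D^n \mc{O}_D$ with $n \gg 0$), chosen so that $K_p K_{\Sigma_0} K^\Sigma$ is neat; we may and do also assume $K_{\Sigma_0}$ is small enough that $\mbf{F}^{K_{\Sigma_0}K^\Sigma}[\mf{m}] \neq 0$, which holds once the level at $\Sigma_0$ lies below the conductor (cf. Section 6.3) and is forced by the modularity of $\bar{\rho}$. The key point is that $\mbf{F}^{K_{\Sigma_0}K^\Sigma}[\mf{m}]$ is \emph{injective} as a smooth $\bar{\mb{F}}_p$-representation of $K_p$. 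To see this, one runs the computation used to prove that $\mbf{F}_{\mf{m}}^{K_{\Sigma_0}K^\Sigma}$ is $K_p$-injective: for a finitely generated — hence finite-dimensional — smooth $K_p$-representation $M$, with associated local system $\mc{M}$ on $D^\times(\mb{Q}) \backslash D^\times(\mb{A}_f)/K_p K_{\Sigma_0}K^\Sigma$, neatness forces $K_p$ to act freely, whence
$$\Hom_{K_p}(M, \mbf{F}^{K_{\Sigma_0}K^\Sigma}) \simeq H^0\!\left(D^\times(\mb{Q}) \backslash D^\times(\mb{A}_f)/K_p K_{\Sigma_0}K^\Sigma,\ \mc{M}^\vee\right) \simeq \mc{M}^\vee \otimes_{\bar{\mb{F}}_p} \mbf{F}^{K_p K_{\Sigma_0}K^\Sigma},$$
and here $\mb{T}^D(K_{\Sigma_0})$ acts only on the second tensor factor, since its operators are defined by correspondences on the coset space alone. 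Taking $\mf{m}$-torsion therefore yields $\Hom_{K_p}(M, \mbf{F}^{K_{\Sigma_0}K^\Sigma}[\mf{m}]) \simeq \mc{M}^\vee \otimes_{\bar{\mb{F}}_p}(\mbf{F}^{K_p K_{\Sigma_0}K^\Sigma}[\mf{m}])$, which is an exact functor of $M$ because $M \mapsto \mc{M}^\vee$ is exact. Hence $\mbf{F}^{K_{\Sigma_0}K^\Sigma}[\mf{m}]$ is injective in the category of smooth $\bar{\mb{F}}_p$-representations of $K_p$.

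Now since $K_p$ is pro-$p$, its only irreducible smooth $\bar{\mb{F}}_p$-representation is the trivial one, and its injective hull is $C^\infty(K_p, \bar{\mb{F}}_p) = \Ind_{\{1\}}^{K_p}\bar{\mb{F}}_p$, which is infinite-dimensional. Since $\mbf{F}^{K_{\Sigma_0}K^\Sigma}[\mf{m}]$ is nonzero, it contains the trivial $K_p$-representation (any nonzero smooth representation of a pro-$p$ group does), hence — being injective — it contains the injective hull $C^\infty(K_p, \bar{\mb{F}}_p)$ of the trivial representation. Thus $\mbf{F}^{K_{\Sigma_0}K^\Sigma}[\mf{m}]$ is infinite-dimensional over $\bar{\mb{F}}_p$, and by the reduction of the first paragraph it is of infinite length as a $D^\times(\mb{Q}_p)$-representation.

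I expect the only real difficulty to be the injectivity step, and precisely the assertion that passing to $\mf{m}$-torsion preserves exactness. The reason it does is geometric rather than formal: for the definite quaternion algebra $D$ the relevant double coset spaces are finite and, after imposing neatness, carry no automorphisms, so the Hecke operators act purely through the coset variable and commute with the coefficient functor $M \mapsto \mc{M}^\vee$; this is what lets one factor out the $\mf{m}$-torsion without losing right exactness. This mechanism is the same one used in the proof that $\mbf{F}_{\mf{m}}^{K_{\Sigma_0}K^\Sigma}$ is $K_p$-injective and in the structure results of Breuil--Diamond in \cite{bd}, which one can also invoke directly.
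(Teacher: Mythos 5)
Your overall strategy is genuinely different from the paper's and, if it could be made to work, would be nicer: the paper reduces (as you do) to proving that $\mbf{F}^{K_{\Sigma_0}K^{\Sigma}}[\mf{m}]$ is infinite-dimensional, but then cites the explicit computations of Breuil--Diamond (Corollary 3.2.4 of \cite{bd}) that produce characteristic-zero automorphic lifts $\pi$ of $\bar{\rho}$ with $\pi^{K_{\Sigma_0}K^\Sigma}$ of unbounded dimension and argues that the reductions of the corresponding lattices accumulate inside $\mbf{F}^{K_{\Sigma_0}K^\Sigma}[\mf{m}]$; the paper even flags this as conditional on the local-global compatibility part of BDJ. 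Your route via $K_p$-injectivity and the fact that the injective envelope of the trivial representation of a pro-$p$ group is $C^\infty(K_p,\bar{\mb{F}}_p)$, which is infinite-dimensional, is more structural and, if correct, would be unconditional.

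However, there is a real gap at exactly the point you flag as the ``only real difficulty.'' The identification $\Hom_{K_p}(M,\mbf{F}^{K_{\Sigma_0}K^\Sigma}) \simeq M^\vee \otimes_{\bar{\mb{F}}_p}\mbf{F}^{K_pK_{\Sigma_0}K^\Sigma}$ is obtained only after choosing base points for the $K_p$-torsor $D^\times(\mb{Q})\backslash D^\times(\mb{A}_f)/K_{\Sigma_0}K^\Sigma \ra D^\times(\mb{Q})\backslash D^\times(\mb{A}_f)/K_pK_{\Sigma_0}K^\Sigma$, and the Hecke correspondence at a prime $l\neq p$ does \emph{not} preserve such a choice. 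Concretely, writing $\tilde{y}_i g_j = \gamma\, \tilde{y}_{\sigma(i,j)} k_{i,j}$ with $\gamma\in D^\times(\mb{Q})$ and $k_{i,j}\in K_p$, the operator $T_l$ in the coordinates of the tensor decomposition acts by $(\Phi_i)_i\mapsto\bigl(\sum_j k_{i,j}^{-1}\cdot\Phi_{\sigma(i,j)}\bigr)_i$, i.e.\ with a $K_p$-valued cocycle twisting the $M^\vee$ factor. This twist is not trivial in general (indeed its triviality would force the image of $\mb{T}$ in $\End(\Hom_{K_p}(M,\mbf{F}^{K_{\Sigma_0}K^\Sigma}))$ to factor through $\End(\mbf{F}^{K_pK_{\Sigma_0}K^\Sigma})$ for all $M$, which is not what happens). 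So the passage from ``$\mbf{F}^{K_{\Sigma_0}K^\Sigma}$ is $K_p$-injective'' — which is exactly Proposition 7.2 of the paper — to ``$\mbf{F}^{K_{\Sigma_0}K^\Sigma}[\mf{m}]$ is $K_p$-injective'' is not established: $\mf{m}$-torsion is a kernel, not a direct summand (unlike the localization $\mbf{F}_{\mf{m}}$, which \emph{is} a $K_p$-direct summand), and one can write down toy examples of a $K_p$-injective with a commuting nilpotent $K_p$-equivariant operator whose kernel is not $K_p$-injective. To rescue the argument you would need a genuinely different reason that $\mf{m}$-torsion commutes with $\Hom_{K_p}(-,\mbf{F}^{K_{\Sigma_0}K^\Sigma})$ as an exact functor, e.g.\ some injectivity of $\Hom_{K_p}(M,\mbf{F}^{K_{\Sigma_0}K^\Sigma})$ over the local Hecke algebra; this is not supplied.
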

\begin{proof} We give a sketch of the proof, which is contained in \cite{bd} as Corollary 3.2.4 (it is conditional on the local-global compatibility part of the Buzzard-Diamond-Jarvis conjecture). Firstly observe that it is enough to prove that $\mbf{F} ^{K_{\Sigma _0} K^{\Sigma}} [\mf{m}]$ is of infinite dimension over $\bar{\mb{F}}_p$, because a representation of finite length will be also of finite dimension as $D^{\times}$ is compact modulo center. Suppose now that we have an automorphic form $\pi$ such that the reduction of its associated Galois representation $\bar{\rho} _{\pi}$ is isomorphic to $\bar{\rho}$ and $\pi ^{K_{\Sigma _0} K^{\Sigma}} \not = 0$. Then there is a lattice $\Lambda _{\pi} = \mbf{F} _{\bar{\mb{Z}}_p} ^{K_{\Sigma _0} K^{\Sigma}} \cap  \pi ^{K_{\Sigma _0} K^{\Sigma}}$ inside $\pi ^{K_{\Sigma _0} K^{\Sigma}}$. Its reduction $\bar{\Lambda} _{\pi} = \Lambda _{\pi} \otimes _{\bar{\mb{Z}}_p} \bar{\mb{F}}_p$ lies in $\mbf{F} ^{K_{\Sigma _0} K^{\Sigma}} [\mf{m}]$ so it is enough to prove that we can find automorphic representations $\pi$ as above with $\pi ^{K_{\Sigma _0} K^{\Sigma}}$ of arbitrarily high dimension. This is done by explicit computations of possible lifts in \cite{bd}. 
\end{proof}
This proposition indicates that $\widehat{H}^1_{LT, \bar{\mb{F}}_p}$ is a non-admissible smooth representation. 

\subsection{Non-admissibility} We have
\begin{prop} The $\GL_2(\mb{Q}_p)$-representations $\widehat{H}^1 _{ss, \bar{\mb{F}}_p}$ and $\widehat{H}^2 _{X_{ord}, \bar{\mb{F}}_p}$ are non-admissible smooth $\bar{\mb{F}}_p$-representations.
\end{prop}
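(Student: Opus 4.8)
The plan is to deduce non-admissibility of $\widehat{H}^1_{ss,\bar{\mb{F}}_p}$ directly from the isomorphism $\widehat{H}^1_{ss,\bar{\mb{F}}_p}\simeq(\mbf{F}\otimes_{\bar{\mb{F}}_p}\widehat{H}^1_{LT,\bar{\mb{F}}_p})^{D^{\times}(\mb{Q}_p)}$ established in Section 6.1 together with the fact, just proved, that $\mbf{F}^{K_{\Sigma_0}K^{\Sigma}}[\mf{m}]$ is of infinite dimension over $\bar{\mb{F}}_p$. Concretely, I would first fix a supersingular $\pi$ and an associated global $\bar{\rho}$ with maximal ideal $\mf{m}$ as in Section 6.3, and combine Theorem~\ref{rough} with the computation preceding it to write
$$\left(\widehat{H}^1_{ss,\bar{\mb{F}}_p}[\mf{m}]\right)^{K_{\mf{m}}}\simeq\left(\sigma_{\mf{m}}\otimes_{\bar{\mb{F}}_p}\widehat{H}^1_{LT,\bar{\mb{F}}_p}\right)^{D^{\times}(\mb{Q}_p)}$$
and observe that this contains $\pi\otimes_{\bar{\mb{F}}_p}\bar{\rho}$. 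The point is that this is a $\GL_2(\mb{Q}_p)$-subrepresentation cut out by the \emph{single} compact open subgroup $K_{\mf{m}}$ and by the Hecke-ideal condition $[\mf{m}]$, which together are harmless for admissibility: if $\widehat{H}^1_{ss,\bar{\mb{F}}_p}$ were admissible, then so would be the $[\mf{m}]$-part and its $K_{\mf{m}}$-invariants, hence $\widehat{H}^1_{LT,\bar{\mb{F}}_p}[\sigma_{\mf{m}}^{\vee}]$ would be an admissible $\GL_2(\mb{Q}_p)$-representation.

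Next I would extract a contradiction from the multiplicity of $\sigma$. Using Corollary~6.5 (the $D^{\times}(\mb{Q}_p)$-socle of $\mbf{F}^{K_{\Sigma_0}K^{\Sigma}}[\mf{m}]$ consists only of copies of $\sigma^{\vee}$) together with Proposition~7.9 (that $\mbf{F}^{K_{\Sigma_0}K^{\Sigma}}[\mf{m}]$ has infinite length, hence infinite $\bar{\mb{F}}_p$-dimension since $D^{\times}$ is compact mod centre), one sees that $\sigma^{\vee}$ occurs as a submodule of $\sigma_{\mf{m}}$ with arbitrarily large finite multiplicity — more precisely, $\sigma_{\mf{m}}$ itself is infinite-dimensional with socle a (possibly infinite) direct sum of copies of $\sigma^{\vee}$. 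Dualizing, $\sigma_{\mf{m}}^{\vee}$ has $\sigma$ appearing as a quotient with unbounded multiplicity, and therefore, applying $\Hom_{D^{\times}(\mb{Q}_p)}(-,\widehat{H}^1_{LT,\bar{\mb{F}}_p})$ and using the injection $\pi\otimes\bar{\rho}\hookrightarrow\widehat{H}^1_{LT,\bar{\mb{F}}_p}[\sigma_{\mf{m}}^{\vee}]$ of Theorem~\ref{rough}, one produces inside $(\widehat{H}^1_{ss,\bar{\mb{F}}_p}[\mf{m}])^{K_{\mf{m}}}$ a sum of unboundedly many copies of the \emph{fixed} irreducible $\pi\otimes\bar{\rho}$ (one shrinks $K^{\Sigma}$ and varies $K_{\Sigma_0}$ only away from $p$, keeping the $\GL_2(\mb{Q}_p)$-level trivial). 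Fixing any compact open $H\subset\GL_2(\mb{Q}_p)$, the $H$-invariants $\pi^H$ are nonzero, so $(\widehat{H}^1_{ss,\bar{\mb{F}}_p})^H$ has infinite $\bar{\mb{F}}_p$-dimension, contradicting admissibility. This settles $\widehat{H}^1_{ss,\bar{\mb{F}}_p}$.

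For $\widehat{H}^2_{X_{ord},\bar{\mb{F}}_p}$ I would use the localized excision sequence of Section 2.1 in the limit over $N$ and $m$, namely
$$\widehat{H}^1_{\bar{\mb{F}}_p}\ra\widehat{H}^1_{ss,\bar{\mb{F}}_p}\ra\widehat{H}^2_{X_{ord},\bar{\mb{F}}_p}\ra\widehat{H}^2_{\bar{\mb{F}}_p},$$
together with the admissibility of $\widehat{H}^1_{\bar{\mb{F}}_p}$ (Proposition 3.4) and of $\widehat{H}^2_{\bar{\mb{F}}_p}$ (which follows the same way from Emerton's results, the cohomology of the open curve $X(Np^m)^{an}$ in degree $2$ being controlled by the admissible $\widehat{H}^2$ of the compactification, or simply vanishing). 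Since admissible representations form a Serre subcategory of smooth representations, if $\widehat{H}^2_{X_{ord},\bar{\mb{F}}_p}$ were admissible then the image of $\widehat{H}^1_{ss,\bar{\mb{F}}_p}$ in it, being a subquotient, would be admissible; but that image is the cokernel of the map from the admissible $\widehat{H}^1_{\bar{\mb{F}}_p}$, so $\widehat{H}^1_{ss,\bar{\mb{F}}_p}$ would be an extension of an admissible representation by an admissible representation, hence admissible — contradicting what we just proved. Thus $\widehat{H}^2_{X_{ord},\bar{\mb{F}}_p}$ is non-admissible as well.

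The main obstacle I anticipate is the bookkeeping in the first paragraph's reduction: one must make sure that passing to the $[\mf{m}]$-part and to $K_{\mf{m}}$-invariants genuinely preserves admissibility as $\GL_2(\mb{Q}_p)$-representations — the delicate point being that $K_{\mf{m}}=K_{\mf{m},\Sigma_0}K^{\Sigma}$ involves \emph{only} places away from $p$, so these operations are exact functors that do not interfere with the $\GL_2(\mb{Q}_p)$-action and send admissible representations to admissible ones — and that the infinite-dimensionality statement for $\sigma_{\mf{m}}$ (Proposition 7.9), which is only conditional on Buzzard–Diamond–Jarvis, can be replaced here by the \emph{unconditional} construction of automorphic forms with $\pi^{K_{\Sigma_0}K^{\Sigma}}$ of arbitrarily high dimension via explicit level-raising, so that the non-admissibility conclusion itself does not depend on any conjecture.
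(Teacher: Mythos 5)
Your second half (deducing the statement for $\widehat{H}^2_{X_{ord},\bar{\mb{F}}_p}$ from the five-term sequence and the admissibility of the cohomology of the modular curve) is essentially identical to the paper's opening reduction, and is fine. The problem is the core claim, non-admissibility of $\widehat{H}^1_{ss,\bar{\mb{F}}_p}$, where your argument has a genuine gap. From the fact that $\sigma^{\vee}$ occurs with large (even infinite) multiplicity in the socle of $\sigma_{\mf{m}}$ you cannot conclude that $(\sigma_{\mf{m}}\otimes_{\bar{\mb{F}}_p}\widehat{H}^1_{LT,\bar{\mb{F}}_p})^{D^{\times}(\mb{Q}_p)}$ contains many copies of $\pi\otimes\bar{\rho}$: the extra copies of $\sigma^{\vee}$ may pair to zero against $\widehat{H}^1_{LT,\bar{\mb{F}}_p}$, and the only copy of $\pi\otimes\bar{\rho}$ actually produced by the paper's results is the single one coming from Emerton's theorem via Theorem~\ref{rough}. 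Knowing the multiplicity of $\sigma$ as a quotient of $\sigma_{\mf{m}}^{\vee}$ tells you nothing about $\dim\Hom_{D^{\times}(\mb{Q}_p)}(\sigma_{\mf{m}}^{\vee},\widehat{H}^1_{LT,\bar{\mb{F}}_p})$ unless you already control the $D^{\times}(\mb{Q}_p)$-structure of $\widehat{H}^1_{LT,\bar{\mb{F}}_p}$ -- which is precisely the kind of statement one is trying to establish. Moreover the input you lean on (Proposition on infinite length of $\mbf{F}^{K_{\Sigma_0}K^{\Sigma}}[\mf{m}]$) is stated in the paper as conditional on Buzzard--Diamond--Jarvis, and your proposed unconditional substitute (automorphic forms with $\pi^{K_{\Sigma_0}K^{\Sigma}}$ of arbitrarily large dimension at \emph{fixed} tame level) is exactly the nontrivial point: in Breuil--Diamond the growth comes from the direction at $p$, i.e. from the $D^{\times}(\mb{Q}_p)$-aspect whose contribution to the invariants is the unjustified step above. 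Your fallback of shrinking $K^{\Sigma}$ and varying $K_{\Sigma_0}$ changes the nature of the statement: the resulting infinitude of invariants comes entirely from the prime-to-$p$ directions of the colimit over tame levels (and even then needs injectivity of the tame transition maps, which you do not address); it says nothing about the local object $\widehat{H}^1_{LT,\bar{\mb{F}}_p}$ and so would not yield the corollary the paper draws from this proposition.

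The paper's proof is entirely different and unconditional: it reduces to showing that $\widehat{H}^1_{LT,\bar{\mb{F}}_p}$ itself is non-admissible, via the Hochschild--Serre spectral sequence $H^i(I,\widehat{H}^j_{LT,\bar{\mb{F}}_p})\Rightarrow H^{i+j}_{LT,I,\bar{\mb{F}}_p}$ at Iwahori level, together with the geometric fact that the Lubin--Tate space at Iwahori level is an open annulus, whose mod $p$ \'etale $H^1$ is infinite-dimensional (Berkovich); if $\widehat{H}^1_{LT,\bar{\mb{F}}_p}$ were admissible, the finite-level cohomology $H^1_{LT,I,\bar{\mb{F}}_p}$ would be finite-dimensional, a contradiction. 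This local, geometric argument is what makes the subsequent corollary about $\widehat{H}^1_{LT,\bar{\mb{F}}_p}$ available, and it avoids any appeal to the conditional global proposition. If you want to salvage your approach, you must either prove unconditionally that $\pi\otimes\bar{\rho}$ occurs with infinite multiplicity in $(\widehat{H}^1_{ss,\bar{\mb{F}}_p}[\mf{m}])^{K_{\mf{m}}}$ at fixed level away from $p$, or switch to a local mechanism such as the paper's.
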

\begin{proof}
If one of them would be admissible, then also the second would because of the exact sequence
$$\widehat{H}^1 _{X_{ord}, \bar{\mb{F}}_p} \ra \widehat{H}^1 _{\bar{\mb{F}}_p} \ra \widehat{H}^1 _{ss, \bar{\mb{F}}_p} \ra \widehat{H}^2 _{X_{ord}, \bar{\mb{F}}_p} \ra \widehat{H}^2 _{\bar{\mb{F}}_p}$$
It is enough to prove that $\widehat{H}^1 _{ss, \bar{\mb{F}}_p}$ is non-admissible, or even that $\widehat{H}^1 _{LT, \bar{\mb{F}}_p}$ is non-admissible. Let us look at the Hochschild-Serre spectral sequence for the Iwahori level $I$ of the Lubin-Tate tower
$$H^i(I,\widehat{H}^j _{LT, \bar{\mb{F}}_p}) \Rightarrow H^{i+j}_{LT, I, \bar{\mb{F}}_p}$$
where we have denoted by $H^{i+j}_{LT, I, \bar{\mb{F}}_p}$ the fundamental representation at $I$-level. Now observe that if $\widehat{H}^1 _{LT, \bar{\mb{F}}_p}$ were admissible, then $H^0(I,\widehat{H}^1 _{LT, \bar{\mb{F}}_p})$ would be of finite dimension. Because $H^1(I,\widehat{H}^0 _{LT, \bar{\mb{F}}_p})$ is of finite dimension (as $\widehat{H}^0 _{LT, \bar{\mb{F}}_p}$ is), this would mean that $H^{1}_{LT, I, \bar{\mb{F}}_p}$ is finite-dimensional. But geometrically Lubin-Tate tower at level $I$ is an annulus (this is a standard fact, one can prove it by methods of section 8.1) and hence $H^{1}_{LT, I, \bar{\mb{F}}_p}$ has to be of infinite dimension (see remark 6.4.2 in \cite{ber4}). This contradiction finishes the proof.
\end{proof}

\begin{coro} The $\GL_2(\mb{Q}_p)$-representation $\widehat{H}^1_{LT, \bar{\mb{F}}_p}$ is a non-admissible smooth $\bar{\mb{F}}_p$-representation.
\end{coro}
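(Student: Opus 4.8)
The plan is to record, as a self-contained statement, the argument that already appears as the crucial intermediate step in the proof of Proposition 7.13. Smoothness is automatic, since $\widehat{H}^1_{LT,\bar{\mb{F}}_p}=\varinjlim_m H^1(LT_{\Delta/K_m},\bar{\mb{F}}_p)$ is a filtered colimit of cohomology groups of the finite layers of the tower, so every class is fixed by some congruence subgroup of $\GL_2(\mb{Q}_p)$. Thus only non-admissibility has to be shown, and I would argue by contradiction.

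Suppose $\widehat{H}^1_{LT,\bar{\mb{F}}_p}$ were admissible. Passing to the Iwahori subgroup $I\subset\GL_2(\mb{Q}_p)$, admissibility forces $H^0(I,\widehat{H}^1_{LT,\bar{\mb{F}}_p})$ to be finite-dimensional over $\bar{\mb{F}}_p$. Now I would feed this into the Hochschild--Serre spectral sequence
$$H^i(I,\widehat{H}^j_{LT,\bar{\mb{F}}_p})\Rightarrow H^{i+j}_{LT,I,\bar{\mb{F}}_p}$$
relating the full tower to the Lubin--Tate tower at level $I$, where $H^{\bullet}_{LT,I,\bar{\mb{F}}_p}$ is the fundamental representation at $I$-level. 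The representation $\widehat{H}^0_{LT,\bar{\mb{F}}_p}$ is finite-dimensional, being assembled from $H^0$ of the (finitely many at each finite level) connected components of the Lubin--Tate spaces, exactly as for $\widehat{H}^0(X(N)_{ord},\bar{\mb{F}}_p)$ in Section 3.2; hence $H^1(I,\widehat{H}^0_{LT,\bar{\mb{F}}_p})$ is finite-dimensional too. The low-degree exact sequence
$$0\ra H^1(I,\widehat{H}^0_{LT,\bar{\mb{F}}_p})\ra H^1_{LT,I,\bar{\mb{F}}_p}\ra H^0(I,\widehat{H}^1_{LT,\bar{\mb{F}}_p})$$
then forces $H^1_{LT,I,\bar{\mb{F}}_p}$ to be finite-dimensional.

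Geometry now supplies the contradiction: the Lubin--Tate tower at level $I$ is an annulus, a standard fact one can check by the explicit methods of Section 8.1, and the first \'etale cohomology group of an annulus with constant $\bar{\mb{F}}_p$-coefficients is infinite-dimensional (Remark 6.4.2 in \cite{ber4}). This is incompatible with the finiteness just derived, so $\widehat{H}^1_{LT,\bar{\mb{F}}_p}$ is not admissible, which is the assertion.

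I expect the only genuinely non-formal point to be this last geometric input --- the identification of the Iwahori-level Lubin--Tate space with an annulus together with the infinitude of its mod $p$ cohomology. The remainder is routine: the low-degree terms of a spectral sequence, the definition of admissibility, and the elementary finiteness of $\widehat{H}^0$ of the tower. In particular nothing from the representation theory of $\GL_2(\mb{Q}_p)$ beyond the definition of admissibility enters, which is precisely why it is worth separating this corollary from the more elaborate Proposition 7.13 in which it is embedded.
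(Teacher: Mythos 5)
Your proof is exactly the argument the paper itself relies on: the corollary's one-line proof ("follows from the proposition above") points to the Hochschild--Serre computation at Iwahori level embedded in the proof of the non-admissibility proposition, and you have reproduced that argument faithfully --- finiteness of $H^1(I,\widehat{H}^0_{LT,\bar{\mb{F}}_p})$, the low-degree exact sequence forcing $H^1_{LT,I,\bar{\mb{F}}_p}$ to be finite-dimensional under the admissibility hypothesis, and the contradiction with the infinite-dimensional $H^1$ of the annulus at Iwahori level. So the proposal is correct and takes essentially the same route as the paper, merely stated in self-contained form.
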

\begin{proof} Follows from the proposition above.
\end{proof}

\subsection{On mod p Jacquet-Langlands correspondence} We come once again to the discussion of the mod p Jacquet-Langlands correspondence. Remark that there are three possible candidates for the correspondence which appear in our work:
\newline
\newline
1) The 2-dimensional irreducible representation $\sigma$ of $D^{\times}(\mb{Q}_p)$ defined by the naive mod p Jacquet-Langlands correspondence.
\newline
\newline
2) The representation $\sigma _{\mf{m}}$ defined by global means and depending a priori on a maximal Hecke ideal $\mf{m}$. It is of infinite length as a representation of $D^{\times}(\mb{Q}_p)$ and contains $\sigma ^{\vee}$ in its socle.
\newline
\newline
3) The representation defined via the cohomology
$$\sigma _{LT} = \Hom _{G _{\mb{Q}_p} \times \GL_2(\mb{Q}_p)} (\bar{\rho} \otimes _{\bar{\mb{F}}_p} \pi, \widehat{H} ^1 _{LT, \bar{\mb{F}}_p})$$
By the results above, it contains $\sigma$ as a subquotient.
\newline
\newline
In the l-adic setting, we can define representations of $D^{\times}(\mb{Q}_p)$ in the similar way and it is known that $\sigma _{LT} \simeq \sigma _{\mf{m}} ^{\vee}$. Moreover $\sigma _{\mf{m}}$ in the l-adic setting is 2-dimensional (at least in the moderately ramified case). This is not the case in the mod p setting as we have showed that representations of 1) and 2) are different (one is 2-dimensional, the other is infinite-dimensional). The natural definition of the mod p correspondence seems to be $\sigma _{LT}$ and it is also natural to ask whether one has $\sigma _{LT} \simeq \sigma _{\mf{m}} ^{\vee}$ for each appropiate $\mf{m}$ as considered before.

\section{Cohomology with compact support}

In this section we will discuss what happens when we consider the cohomology with compact support. Our basic result is negative and it states that the first cohomology group with compact support of the fundamental representation $\widehat{H}^1 _{LT,c, \bar{\mb{F}}_p}$ does not contain any supersingular representation of $\GL _2(\mb{Q}_p)$ as a subrepresentation. This suprising result, which is very different from the situation known in the l-adic setting where $l\not =p$, leads to a similar exact sequence as we have considered for cohomology without support, but this time, we get that $\pi \otimes \bar{\rho}$ is contained in the $H^1$ of the ordinary locus. 

\subsection{Geometry at pro-p Iwahori level}
Let $K(1) = \left( \begin{smallmatrix}  1 + p \mb{Z}_p & p \mb{Z}_p \\ p \mb{Z}_p & 1+p\mb{Z}_p \end{smallmatrix} \right)$ and let $I(1) = \left( \begin{smallmatrix}  1 + p \mb{Z}_p &  \mb{Z}_p \\ p \mb{Z}_p & 1+p\mb{Z}_p \end{smallmatrix} \right)$ be the pro-p Iwahori subgroup. We let
$$ \mc{M} _{LT, K(1)} = \Spf R _{K(1)}$$
$$ \mc{M} _{LT, I(1)} = \Spf R _{I(1)}$$
be the formal models for the Lubin-Tate space at levels $K(1)$ and $I(1)$ respectively. We will compute $R _{I(1)}$ explicitely. This is also done in a more general setting in the work of Haines-Rapoport (see Corollary 3.4.3 in \cite{hr}) but here we give a short and elementary argument. 

We know that $R _{I(1)} = R_{K(1)} ^{I(1)}$ and hence we can use the explicit description of $R _{K(1)}$ by Yoshida to get the result (see Proposition 3.5 in \cite{yo}). Let $W = W(\bar{\mb{F}}_p)$ be the Witt vectors of $\bar{\mb{F}}_p$. There is a surjection $W[[\tilde{X}_1,\tilde{X}_2]] \twoheadrightarrow R _{K(1)}$ which maps $\tilde {X} _i$ to $X_i$ where $X_i$ ($i=1,2$) are local parameters for $R_{K(1)}$ which form a $\mb{F}_p$-basis of $\mf{m}_{R_{K(1)}}[p] = \{ x \in \mf{m} _{R_{K(1)}} | [p](x) =0 \}$, where $[p]$ is explained below. We will find parameters for $R _{I(1)} = R_{K(1)} ^{I(1)}$. Observe that for $b\in \mb{F}_p$ we have (see chapter 3 of \cite{yo})
$$ \begin{pmatrix} 1  & b \\ 0 & 1 \end{pmatrix} X_1 = X_1$$
$$  \begin{pmatrix} 1  & b \\ 0 & 1  \end{pmatrix} X_2 = [b]X_1 + _{\Sigma} X_2$$
where $+ _{\Sigma}$ is the addition on the universal deformation of the unique formal group over $\bar{\mb{F}}_p$ of height 2 and $[.]$ gives the structure of multiplication by elements of $W$ on the same universal deformation $\Sigma$. See Chapter 3 of \cite{yo} for details. We see that $X_2$ is not invariant under $I(1)$ and hence we define $X_2 ' = \prod _{b \in \mb{F}_p} ([b]X_1 + _{\Sigma} X_2)$ which is. We claim that $(X_1, X_2')$ are local parameters for $R_{I(1)}$. Indeed if $z$ belongs to $R_{I(1)} = R_{K(1)} ^{I(1)}$ then we may write it as $z = P(X_1) + X_2 Q(X_1, X_2)$, where $P \in W[[X_1]]$ and $Q \in W[[X_1,X_2]]$. As $P(X_1)$ is invariant under $I(1)$, we see that also $X_2 Q(X_1, X_2)$ has to be invariant under $I(1)$. Because of the action of $\left( \begin{smallmatrix} 1  & b \\ 0 & 1  \end{smallmatrix} \right)$ on $X_2$ described above and the fact that $R_{K(1)}$ is a regular local ring hence factorial, we see that $X _2 '$ divides $X_2 Q(X_1,X_2)$ (we use here the fact that $[b]X_1 + _{\Sigma} X_2$ and $[b']X_1 + _{\Sigma} X_2$ are not associated for $b \not = b'$; this follows from Proposition 4.2 in \cite{str}). This leads to $z = P(X_1) + X_2 ' Q'(X_1,X_2)$ for some $Q'$ which is $I(1)$-invariant and hence we conclude by induction and the fact that polynomials are dense in formal series (look at $z$ modulo powers of the maximal ideal $\mf{m} _{R_{I(1)}}$).

Let us observe that for $a \in \mb{F}_p ^{\times}$ we have for $i=1,2$: $[a]X_i = uX_i$, where $u$ is a unit in $R_{K(1)}$. Let us now look at the relation defining $R _{K(1)}$ inside $W[[\tilde{X}_1,\tilde{X}_2]]$ which appears in Proposition 3.5 of \cite{yo}. We have
$$ p = u \prod _{(a_1,a_2) \in \mb{F}_p ^2 \backslash \{0,0\}} ([a_1]X_1 +_{\Sigma} [a_2]X_2)$$
where $u$ is some unit in $R_{K(1)}$. Let us write $a \sim b$ whenever $a = ub$ for some unit $u$ in $R_{K(1)}$. Thus we have
$$p \sim \prod _{(a_1,a_2) \in \mb{F}_p ^2 \backslash \{0,0\}} ([a_1]X_1 +_{\Sigma} [a_2]X_2) \sim \left( \prod _{a_1 \in \mb{F}_p ^{\times}} [a_1]X_1 \right) \left( \prod _{a_1 \in \mb{F} _p} \prod _{a_2 \in \mb{F}_p ^{\times} } [a_2]([a_1/a_2]X_1 +_{\Sigma} X_2) \right) \sim $$
$$\sim  \left( \prod _{a_1 \in \mb{F}_p ^{\times}} [a_1]X_1 \right) \left( \prod _{a_2 \in \mb{F}_p ^{\times}} X_2 ' \right) \sim (X_1 X_2 ')^{p-1}$$
Hence we have $p = u' (X_1 X_2 ')^{p-1}$ for some unit $u'$ in $R_{K(1)}$ a priori, but we can see that $u'$ is in fact a unit in $R_{I(1)}$.  Because $W[[X,Y]]$ is a complete local ring with an algebraically closed residue field there exists a $(p-1)$-th root of $u'$, and hence we can write $p = (X_1' X_2 '')^{p-1}$. We want to conclude that this is the only relation in $R_{I(1)}$ which means that there exists a surjection 
$$ B = W[[\tilde{X}_1 ', \tilde{X} '' _2]] \twoheadrightarrow R_{I(1)}$$
with kernel $f = (\tilde{X}_1' \tilde{X}'' _2)^{p-1} - p$. First of all, observe that $R _{I(1)}$ and $B / f B$ are regular local rings of dimension 2 with a surjection $B/fB \twoheadrightarrow R_{I(1)}$. We claim that this map has to be neccessarily an injection also. Indeed, this holds for any surjective morphism $A \twoheadrightarrow R$ of regular local rings of the same dimension by using the fact that that for a regular local ring we have $\gr ^{\bullet} _{\mf{m} _A} A \simeq \Sym \mf{m} _A / \mf{m} ^2 _A$. This yields an isomorphism at the graded level which lifts to the level of rings. All in all, we conclude that  
\begin{prop} We have
$$R _{I(1)} \simeq W[[X,Y]] / ( (XY)^{p-1} - p)$$
\end{prop}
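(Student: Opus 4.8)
The plan is to leverage Yoshida's explicit presentation of the formal model at full level and then descend to the pro-$p$ Iwahori invariants. By Proposition 3.5 of \cite{yo} there is a surjection $W[[\tilde X_1, \tilde X_2]] \twoheadrightarrow R_{K(1)}$ sending $\tilde X_i$ to the chosen local parameters $X_i$ (an $\mb{F}_p$-basis of $\mf{m}_{R_{K(1)}}[p]$), whose kernel is generated by the single relation $p = u \prod_{(a_1,a_2) \in \mb{F}_p^2 \setminus \{0\}} ([a_1]X_1 +_{\Sigma} [a_2]X_2)$ for a unit $u$. Since $R_{I(1)} = R_{K(1)}^{I(1)}$, the first step is to identify invariant parameters: from the action formulas $( \begin{smallmatrix} 1 & b \\ 0 & 1 \end{smallmatrix} ) X_1 = X_1$ and $( \begin{smallmatrix} 1 & b \\ 0 & 1 \end{smallmatrix} ) X_2 = [b]X_1 +_{\Sigma} X_2$, the natural invariant replacing $X_2$ is $X_2' = \prod_{b \in \mb{F}_p}([b]X_1 +_{\Sigma} X_2)$, and I would take $(X_1, X_2')$ as the candidate.

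Next I would verify that $(X_1, X_2')$ are genuinely local parameters of $R_{I(1)}$. Writing any invariant element as $z = P(X_1) + X_2 Q(X_1, X_2)$ with $P \in W[[X_1]]$, $Q \in W[[X_1, X_2]]$, invariance of $P(X_1)$ forces $X_2 Q$ to be invariant; since $R_{K(1)}$ is regular local, hence a UFD, and since the factors $[b]X_1 +_{\Sigma} X_2$ are pairwise non-associate --- this uses Proposition 4.2 of \cite{str} --- invariance makes $X_2'$ divide $X_2 Q$. Iterating this and using density of polynomials (working modulo powers of $\mf{m}_{R_{I(1)}}$) shows every invariant element lies in $W[[X_1, X_2']]$, so $X_1, X_2'$ topologically generate $R_{I(1)}$.

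I would then rewrite the $K(1)$-relation in these coordinates: splitting the product over $(a_1,a_2) \neq 0$ into the $a_2 = 0$ part and the $a_2 \in \mb{F}_p^\times$ part, and using that $[a]X_i$ is a unit multiple of $X_i$ for $a \in \mb{F}_p^\times$, the relation collapses to $p = u' (X_1 X_2')^{p-1}$ with $u'$ a unit, which one checks actually lies in $R_{I(1)}$. Since $W[[X,Y]]$ is complete with algebraically closed residue field, $u'$ has a $(p-1)$-th root, so after rescaling I may arrange $p = (X_1' X_2'')^{p-1}$. This produces a surjection $B = W[[\tilde X_1', \tilde X_2'']] \twoheadrightarrow R_{I(1)}$ whose kernel contains $f = (\tilde X_1' \tilde X_2'')^{p-1} - p$.

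The last, and I expect hardest, step is to show that $f$ generates the whole kernel, i.e. that $B/fB \twoheadrightarrow R_{I(1)}$ is an isomorphism. One first checks $B/fB$ is regular local of dimension $2$: $f$ is a nonzerodivisor in the three-dimensional ring $B$, and $p \equiv (\tilde X_1' \tilde X_2'')^{p-1}$ lies in $\mf{m}^2$ modulo $f$, so the cotangent space is two-dimensional. Both $B/fB$ and $R_{I(1)}$ are then regular local of the same dimension, and a surjection of regular local rings of equal dimension is forced to be an isomorphism: it induces an isomorphism on the cotangent spaces, hence on $\gr^{\bullet}_{\mf{m}} \simeq \Sym(\mf{m}/\mf{m}^2)$, and this lifts to an isomorphism of the complete rings. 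The real content here is the regularity of $R_{I(1)}$ itself --- coming from the smoothness of the Lubin-Tate space at level $K(1)$ via Yoshida, together with the fact that $X_1, X_2'$ form a regular system of parameters --- rather than any further manipulation of the relation.
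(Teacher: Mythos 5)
Your proposal is correct and follows essentially the same route as the paper: Yoshida's presentation of $R_{K(1)}$, the invariant parameter $X_2' = \prod_{b \in \mb{F}_p}([b]X_1 +_{\Sigma} X_2)$ with the divisibility argument resting on Strauch's non-associateness of the factors, the collapse of the relation to $p = u'(X_1X_2')^{p-1}$ followed by extraction of a $(p-1)$-th root of the unit, and the concluding observation that the surjection $B/fB \twoheadrightarrow R_{I(1)}$ between regular local rings of dimension two must be an isomorphism. The only cosmetic difference is that you spell out the regularity of $B/fB$ (nonzerodivisor plus $p \in \mf{m}^2$), which the paper merely asserts.
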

This means that $\mc{M} _{LT, I(1)}$ is made of $p-1$ copies of an open annulus in $\mb{P} ^1$ after a base change to $W[\sqrt[p-1]{p}]$:
$$R _{I(1)} \otimes _W  W[\sqrt[p-1]{p}] \simeq \prod _{i =1} ^{p-1} W[[X,Y]] / (XY - \sqrt[p-1]{p} \cdot \zeta _{p-1} ^i)$$
\subsection{Cohomology at pro-p Iwahori level}

We compute $H^1_c (\mc{M} _{LT, I(1)}, \bar{\mb{F}}_p)$ (we will omit $\bar{\mb{F}}_p$ from the notation in what follows). Let $\mc{A}$ be an open annulus in $\mb{P}^1$. We can write a long exact sequence
$$0 \ra H^0 _c (\mc{A}) \ra H^0  (\mb{P}^1) \ra H^0(\mb{P} ^1 \backslash \mc{A}) \ra H^1 _c(\mc{A}) \ra H^1(\mb{P}^1)$$
We know that
$$ H^1(\mb{P}^1) = H^0 _c(\mc{A})= 0$$
$$ \dim _{\bar{\mb{F}}_p} H^0(\mb{P}^1) = 1$$
$$ \dim _{\bar{\mb{F}}_p} H^0 (\mb{P} ^1 \backslash \mc{A}) = 2$$
and hence it follows that 
$$\dim _{\bar{\mb{F}}_p} H^1 _c(\mc{A})=1$$
Because geometrically $\mc{M} _{LT, I(1)}$ is made of $p-1$ copies of $\mc{A}$, we have
$$\dim _{\bar{\mb{F}}_p} H^1 _c(\mc{M} _{LT, I(1)})=p-1$$
Let $\mc{H} = \mc{H} _{\GL_2}(I(1)) = \bar{\mb{F}}_p[I(1)\backslash \GL_2(\mb{Q}_p) / I(1)]$ be the mod p Hecke algebra at the pro-p Iwahori level. Let $I$ be the Iwahori subgroup of $\GL_2(\mb{Z}_p)$. We look at the action of $\bar{\mb{F}}_p[I /I(1)] \simeq \bar{\mb{F}}_p[(\mb{F}_p ^{\times})^2]$ on the cohomology. We know by \cite{str} that it acts by determinant on connected components of $\mc{M} _{LT, K(1)}$ and hence on connected components of $\mc{M} _{LT, I(1)}$ so we have a decomposition of $H^1 _c(\mc{M} _{LT, I(1)})$ into $p-1$ pieces of dimension 1:
$$ H^1 _c(\mc{M} _{LT, I(1)}) = \bigoplus _{\chi : \mb{F} _p ^{\times} \ra \bar{\mb{F}}_p ^{\times}} H^1 _c(\mc{M} _{LT, I(1)}) _{\chi}$$
where $H^1 _c(\mc{M} _{LT, I(1)}) _{\chi}$ is the part of $H^1 _c(\mc{M} _{LT, I(1)})$ on which $\bar{\mb{F}}_p[(\mb{F}_p ^{\times})^2]$ acts through $\chi \circ \det$. 

\subsection{Vanishing result}

We will now prove that the supersingular representation $\pi$ does not appear in $\widehat{H}^1 _{LT, c, \bar{\mb{F}}_p}$. First of all, remark that it is enough to prove that the $\mc{H}$-module $\pi ^{I(1)}$ does not appear in $(\widehat{H}^1 _{LT, c, \bar{\mb{F}}_p})^{I(1)}$, because the functor $\pi \mapsto \pi ^{I(1)}$ induces a bijection between supersingular representations and supersingular Hecke modules (see \cite{vi3}). We have the Hochschild-Serre spectral sequence (see Appendix A)
$$ H^i(I(1), \widehat{H}^j _{LT,c, \bar{\mb{F}}_p}) \Rightarrow H^{i+j} _{LT, c, I(1), \bar{\mb{F}}_p}$$
where we have denoted by $H^{i+j} _{LT, c, I(1), \bar{\mb{F}}_p}$ the fundamental representation at $I(1)$-level. This gives a long exact sequence
$$ 0 \ra H^1 (I(1), \widehat{H}^0 _{LT,c, \bar{\mb{F}}_p}) \ra H^1 _{LT, c, I(1), \bar{\mb{F}}_p} \ra (\widehat{H}^1 _{LT, c, \bar{\mb{F}}_p})^{I(1)} \ra H^2 (I(1), \widehat{H}^0 _{LT,c, \bar{\mb{F}}_p})$$
Because $\widehat{H}^0 _{LT, c, \bar{\mb{F}}_p} =0$ as $H^0 _c(\mc{M} _{LT},\bar{\mb{F}}_p) = 0$ we have an $\mc{H}$-equivariant isomorphism
$$H^1 _{LT, c, I(1), \bar{\mb{F}}_p} \simeq (\widehat{H}^1 _{LT, c, \bar{\mb{F}}_p})^{I(1)}$$
This means that if $\pi ^{I(1)}$ appears in $(\widehat{H}^1 _{LT, c, \bar{\mb{F}}_p})^{I(1)}$ then it appears also in $H^1 _{LT, c, I(1), \bar{\mb{F}}_p}$. But because $H^1 _{LT, c, I(1), \bar{\mb{F}}_p}$ consists of multiple copies of $H^1 _c(\mc{M}_{LT, I(1)}, \bar{\mb{F}}_p)$, it is enough to show that $\pi ^{I(1)}$ does not appear in $H^1 _c(\mc{M}_{LT, I(1)}, \bar{\mb{F}}_p)$. To prove it, it suffices to show that no supersingular $\mc{H}$-module appear in  $H^1 _c(\mc{M}_{LT, I(1)}, \bar{\mb{F}}_p)$. Let $M$ be any supersingular $\mc{H}$-module. Then we know that it is 2-dimensional and of the form $M_2(0,z,\omega)$ as in Section 3.2 of \cite{vi3}, where $\omega$ is a character of $I/ I(1)$. If we write $I/I(1) = \mb{F}_p^{\times} \times \mb{F}_p ^{\times}$ and $\omega = \eta _1 \otimes \eta _2$ then $M = (\eta _1 \otimes \eta _2) \oplus (\eta_2 \otimes \eta _1)$ as a $I/I(1)$-module. If $M$ appears in $H^1 _c(\mc{M}_{LT, I(1)}, \bar{\mb{F}}_p)$, then $I/I(1)$ acts on $M$ by determinant and hence $\eta _1 = \eta _2$. This would mean that $H^1 _c(\mc{M} _{LT, I(1)}) _{\eta _1}$ is at least 2-dimensional, which is a contradiction. All in all, we conclude that $\pi ^{I(1)}$ does not appear in $(\widehat{H} ^1 _{LT, c, \bar{\mb{F}}_p})^{I(1)}$ and hence
\begin{theo} The supersingular representation $\pi$ does not appear in $\widehat{H}^1 _{LT,c, \bar{\mb{F}}_p}$. 
\end{theo}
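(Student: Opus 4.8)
The plan is to descend the whole problem to the pro-$p$ Iwahori level, where the geometry of the Lubin--Tate space is completely explicit, and then pit the representation theory of supersingular Hecke modules against the structure of the cohomology of an annulus. First I would invoke Vign\'eras' equivalence: the functor $\pi \mapsto \pi^{I(1)}$ identifies supersingular representations of $\GL_2(\mb{Q}_p)$ with supersingular modules over the pro-$p$ Iwahori Hecke algebra $\mc{H} = \bar{\mb{F}}_p[I(1)\backslash \GL_2(\mb{Q}_p)/I(1)]$, so it suffices to show that no supersingular $\mc{H}$-module occurs in $(\widehat{H}^1_{LT,c,\bar{\mb{F}}_p})^{I(1)}$. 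For this I would run the Hochschild--Serre spectral sequence $H^i(I(1),\widehat{H}^j_{LT,c,\bar{\mb{F}}_p}) \Rightarrow H^{i+j}_{LT,c,I(1),\bar{\mb{F}}_p}$ from Appendix A; since $\widehat{H}^0_{LT,c,\bar{\mb{F}}_p} = 0$ (the Lubin--Tate space is non-compact, so $H^0_c = 0$), the low-degree exact sequence collapses to an $\mc{H}$-equivariant isomorphism $H^1_{LT,c,I(1),\bar{\mb{F}}_p} \simeq (\widehat{H}^1_{LT,c,\bar{\mb{F}}_p})^{I(1)}$. As $H^1_{LT,c,I(1),\bar{\mb{F}}_p}$ is a finite direct sum of copies of $H^1_c(\mc{M}_{LT,I(1)},\bar{\mb{F}}_p)$, the question reduces to a single space $H^1_c(\mc{M}_{LT,I(1)})$.

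Next I would make the geometry explicit. Starting from Yoshida's presentation of $R_{K(1)}$ and taking $I(1)$-invariants one obtains $R_{I(1)} \simeq W[[X,Y]]/((XY)^{p-1}-p)$, so after a suitable base change $\mc{M}_{LT,I(1)}$ becomes a disjoint union of $p-1$ open annuli. The long exact sequence relating $H^*_c(\mc{A})$, $H^*(\mb{P}^1)$ and $H^*(\mb{P}^1\setminus \mc{A})$ for an annulus $\mc{A}$ gives $\dim_{\bar{\mb{F}}_p} H^1_c(\mc{A}) = 1$, whence $\dim_{\bar{\mb{F}}_p} H^1_c(\mc{M}_{LT,I(1)}) = p-1$. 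Crucially, by Strauch's results the group $I/I(1) \simeq (\mb{F}_p^\times)^2$ acts on the set of connected components through the determinant, so $H^1_c(\mc{M}_{LT,I(1)}) = \bigoplus_{\chi\colon \mb{F}_p^\times \to \bar{\mb{F}}_p^\times} H^1_c(\mc{M}_{LT,I(1)})_\chi$, each $\chi$-part being one-dimensional with $(\mb{F}_p^\times)^2$ acting on it through $\chi \circ \det$.

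Finally I would obstruct supersingularity. By Vign\'eras' classification (Section~3.2 of \cite{vi3}) a supersingular $\mc{H}$-module $M$ is two-dimensional of the form $M_2(0,z,\omega)$, and writing $I/I(1) = \mb{F}_p^\times \times \mb{F}_p^\times$ and $\omega = \eta_1 \otimes \eta_2$ one has $M \simeq (\eta_1\otimes\eta_2) \oplus (\eta_2\otimes\eta_1)$ as an $I/I(1)$-module. If $M$ occurred in $H^1_c(\mc{M}_{LT,I(1)})$ then $I/I(1)$ would act on $M$ through the determinant, forcing $\eta_1 = \eta_2$; but then the two summands coincide, $M$ lies entirely in a single $\chi$-isotypic part, and $\dim H^1_c(\mc{M}_{LT,I(1)})_\chi \geq 2$, contradicting the computation above. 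Hence no supersingular Hecke module appears, so $\pi^{I(1)}$ does not appear, and therefore $\pi$ does not appear in $\widehat{H}^1_{LT,c,\bar{\mb{F}}_p}$.

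I expect the main obstacle to be the explicit determination of $R_{I(1)}$: the invariant-theoretic step showing that $(X_1, X_2')$ with $X_2' = \prod_{b\in\mb{F}_p}([b]X_1 +_\Sigma X_2)$ are parameters, and that $(XY)^{p-1}=p$ is the only relation (this last point using that $R_{I(1)}$ and $W[[X,Y]]/((XY)^{p-1}-p)$ are both regular local of dimension $2$, so the evident surjection between them is an isomorphism by the graded comparison $\gr^\bullet_{\mf{m}}A \simeq \Sym(\mf{m}/\mf{m}^2)$). Coupled with importing from Strauch that $I/I(1)$ acts on $\pi_0(\mc{M}_{LT,I(1)})$ via the determinant, everything else is a short dimension count.
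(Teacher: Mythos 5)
Your proposal is correct and follows essentially the same route as the paper: Vign\'eras' bijection to reduce to supersingular $\mc{H}$-modules at pro-$p$ Iwahori level, the Hochschild--Serre spectral sequence together with $\widehat{H}^0_{LT,c,\bar{\mb{F}}_p}=0$, the explicit computation $R_{I(1)} \simeq W[[X,Y]]/((XY)^{p-1}-p)$ giving $p-1$ annuli, and the contradiction between the determinant action of $I/I(1)$ on the one-dimensional $\chi$-parts and the shape $(\eta_1\otimes\eta_2)\oplus(\eta_2\otimes\eta_1)$ of a supersingular module. No substantive differences to report.
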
 
We could rephrase it also as 
$$\widehat{H}^1 _{LT, c, \bar{\mb{F}}_p, (\pi)} = 0$$
\begin{rema}
Observe that the above proof does not use in any particular form the fact that we are working with $\GL _2(\mb{Q}_p)$, besides the fact that the functor $\pi \mapsto \pi ^{I(1)}$ induces a bijection between supersingular representations and supersingular $\mc{H}$-modules. Apart from that, the results of Vigneras and Yoshida holds for $\GL _2(F)$ as well, where $F$ is a finite extension $\mb{Q}_p$ and show that there are no supersingular modules in the cohomology with compact support of the Lubin-Tate tower at the pro-p Iwahori level. This leads to the conclusion that supersingular representations of $\GL _2 (F)$ attached to these supersingular modules by the construction of Paskunas (see \cite{pa5}) do not appear in the cohomology with compact support of the Lubin-Tate tower at infinite level. We remark that, contrary to $F = \mb{Q}_p$ case, those supersingular representations constructed by Paskunas do not conjecturally give all the supersingular representations of $\GL _2 (F)$.
\end{rema}
The above theorem gives us, when combined with the exact sequence for the supersingular locus, an appearance of the mod p local Langlands correspondence in the cohomology of the ordinary locus (in contrast with the mod l situation).
\begin{coro} We have an $\GL_2(\mb{Q}_p)\times G_{\mb{Q}_p}$-equivariant injection
$$ \pi \otimes \bar{\rho} \hookrightarrow \widehat{H}^1 _{ord, \bar{\mb{F}}_p}$$
\end{coro}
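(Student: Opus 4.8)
The plan is to feed the vanishing theorem just proved, $\widehat{H}^1 _{LT, c, \bar{\mb{F}}_p, (\pi)} = 0$, into the long exact sequence for cohomology with compact support, localised at $\pi$. First I would take the exact sequence of Section~2.1 at tame level $N$ and level $p^m$, pass to the direct limit over $m$ and then over $N$, getting an exact sequence
$$\cdots \ra \widehat{H}^1 _{ss, c, \bar{\mb{F}}_p} \ra \widehat{H}^1 _{\bar{\mb{F}}_p} \stackrel{\phi}{\ra} \widehat{H}^1 _{ord, \bar{\mb{F}}_p} \ra \cdots$$
in which $\ker \phi$ is the image of $\widehat{H}^1 _{ss, c, \bar{\mb{F}}_p}$. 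Both $\widehat{H}^1 _{\bar{\mb{F}}_p} = \varinjlim _N \widehat{H}^1 (X(N), \bar{\mb{F}}_p)$ and $\widehat{H}^1 _{ss, c, \bar{\mb{F}}_p} = \varinjlim _N \widehat{H}^1 _c (X(N) _{ss}, \bar{\mb{F}}_p)$ are filtered colimits of admissible $\GL_2(\mb{Q}_p)$-representations (Propositions~3.5 and~3.8), so by the block decomposition of Paskunas (Proposition~4.2) each of them splits, compatibly with the transition maps, as a direct sum of its localisation at $\pi$ and a complementary summand on which $(-)_{(\pi)}$ vanishes; in particular $(-)_{(\pi)}$ makes sense on all the terms to the left of $\widehat{H}^1 _{ord, \bar{\mb{F}}_p}$, which is the one term not known to be admissible.

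The first substantive step is to check that $\widehat{H}^1 _{ss, c, \bar{\mb{F}}_p, (\pi)} = 0$. This should follow from the theorem just proved, via the isomorphism $\widehat{H}^1 _{ss, c, \bar{\mb{F}}_p} \simeq \left( \mbf{F} \otimes _{\bar{\mb{F}}_p} \widehat{H}^1 _{LT, c, \bar{\mb{F}}_p} \right)^{D^{\times}(\mb{Q}_p)}$ of Section~6.1: the $\GL_2(\mb{Q}_p)$-action sits entirely on the $\widehat{H}^1 _{LT, c, \bar{\mb{F}}_p}$-factor and commutes with the $D^{\times}(\mb{Q}_p)$-action, so $(-)_{(\pi)}$ should commute both with $\mbf{F} \otimes _{\bar{\mb{F}}_p} (-)$ (a filtered colimit of finite direct sums, $\mbf{F}$ carrying the trivial $\GL_2(\mb{Q}_p)$-action) and with the formation of $D^{\times}(\mb{Q}_p)$-invariants, reducing the claim to $\widehat{H}^1 _{LT, c, \bar{\mb{F}}_p, (\pi)} = 0$. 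Granting this, exactness of $(-)_{(\pi)}$ shows that $\ker \phi$, being the image of $\widehat{H}^1 _{ss, c, \bar{\mb{F}}_p}$ in $\widehat{H}^1 _{\bar{\mb{F}}_p}$, also dies after localising at $\pi$; hence it is contained in the summand of $\widehat{H}^1 _{\bar{\mb{F}}_p}$ complementary to $\left( \widehat{H}^1 _{\bar{\mb{F}}_p} \right)_{(\pi)}$, so that $\ker \phi \cap \left( \widehat{H}^1 _{\bar{\mb{F}}_p} \right)_{(\pi)} = 0$ and $\phi$ restricts to an injection $\left( \widehat{H}^1 _{\bar{\mb{F}}_p} \right)_{(\pi)} \hookrightarrow \widehat{H}^1 _{ord, \bar{\mb{F}}_p}$.

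Finally I would invoke the embedding $\pi \otimes _{\bar{\mb{F}}_p} \bar{\rho} \hookrightarrow \left( \widehat{H}^1 _{\bar{\mb{F}}_p} \right)_{(\pi)}$ already obtained in Section~6.3 from Emerton's theorem and the mod $p$ local Langlands correspondence: there $\pi \otimes _{\bar{\mb{F}}_p} \bar{\rho}$ is identified with $\left( \widehat{H}^1 _{\bar{\mb{F}}_p}[\mf{m}] \right)^{K_{\mf{m}}} _{(\pi)}$, which is a subobject of $\widehat{H}^1 _{\bar{\mb{F}}_p}$ lying in the $(\pi)$-summand — it is obtained by applying the left-exact functors $(-)[\mf{m}]$ and $(-)^{K_{\mf{m}}}$, with $K_{\mf{m}}$ prime to $p$, followed by the exact projection $(-)_{(\pi)}$, all of which commute with the $\GL_2(\mb{Q}_p) \times G_{\mb{Q}_p}$-action. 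Composing with the injection of the previous step gives the desired $\GL_2(\mb{Q}_p) \times G_{\mb{Q}_p}$-equivariant injection $\pi \otimes \bar{\rho} \hookrightarrow \widehat{H}^1 _{ord, \bar{\mb{F}}_p}$. The main point requiring care is the very first substantive step: the transfer of the vanishing from $\widehat{H}^1 _{LT, c, \bar{\mb{F}}_p}$ to $\widehat{H}^1 _{ss, c, \bar{\mb{F}}_p}$, i.e.\ verifying that $(-)_{(\pi)}$ genuinely commutes with the colimit over the tame level and with the $D^{\times}(\mb{Q}_p)$-invariants; beyond that, the argument is a diagram chase with the compact-support sequence and the block decomposition, and the real input is the theorem just proved rather than anything below it.
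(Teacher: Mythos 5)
Your proof is correct and follows exactly the route the paper has in mind: the paper gives no explicit proof of this corollary beyond the sentence preceding it (``The above theorem gives us, when combined with the exact sequence for the supersingular locus, ...''), and what you have written is precisely the diagram chase that sentence gestures at — feed the vanishing $\widehat{H}^1_{LT,c,\bar{\mb{F}}_p,(\pi)}=0$ into the compact-support long exact sequence via the description $\widehat{H}^1_{ss,c,\bar{\mb{F}}_p}\simeq(\mbf{F}\otimes\widehat{H}^1_{LT,c,\bar{\mb{F}}_p})^{D^\times(\mb{Q}_p)}$ from Section~6.1, conclude $(\widehat{H}^1_{ss,c,\bar{\mb{F}}_p})_{(\pi)}=0$, then use the block decomposition on the admissible terms $\widehat{H}^1_{ss,c}$ and $\widehat{H}^1_{\bar{\mb{F}}_p}$ to show the boundary map kills nothing in the $\pi$-block, and finally embed $\pi\otimes\bar{\rho}$ from Emerton's theorem (Section~6.3). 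You are also right that the only term requiring care is $\widehat{H}^1_{ord}$, which is not admissible — but you only use the exactness of $v$ on it, not a localisation of it.

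One small point worth tightening: you phrase the key step as ``$(-)_{(\pi)}$ commutes with $\mbf{F}\otimes(-)$ and with $D^\times(\mb{Q}_p)$-invariants.'' Stated literally this is a bit delicate, since $\widehat{H}^1_{LT,c,\bar{\mb{F}}_p}$ is not known to be admissible (indeed $\widehat{H}^2_{LT,c}$ is proved non-admissible later), so the Paskunas block projector isn't a priori defined on it, and $(-)^{D^\times(\mb{Q}_p)}$ is only left-exact. The argument that actually lands is slightly weaker and cleaner: as a $\GL_2(\mb{Q}_p)$-representation, $\mbf{F}\otimes\widehat{H}^1_{LT,c}$ is a direct sum of copies of $\widehat{H}^1_{LT,c}$, and $\widehat{H}^1_{ss,c}$ is a $\GL_2(\mb{Q}_p)$-subrepresentation of it; since $\pi$ is not a subquotient of $\widehat{H}^1_{LT,c}$ (the theorem just proved) and subquotients of subrepresentations of sums of copies of a module are subquotients of that module, $\pi$ is not a subquotient of $\widehat{H}^1_{ss,c}$, i.e.\ $(\widehat{H}^1_{ss,c})_{(\pi)}=0$ — which now does make sense, as $\widehat{H}^1_{ss,c}$ is admissible by Proposition~3.8. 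With that rephrasing the proof is airtight.
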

Moreover, this vanishing result can be used in the study of non-admissibility and in the description of the cohomology of certain Shimura curves.

\subsection{Non-admissibility} We will now show that our cohomology groups are non-admissible representations of $\GL_2(\mb{Q}_p)$. We start with:
\begin{prop} The $\GL_2(\mb{Q}_p)$-representations $\widehat{H}^2 _{ss, c, \bar{\mb{F}}_p}$ and $\widehat{H}^1 _{ord, \bar{\mb{F}}_p}$ are non-admissible smooth $\bar{\mb{F}}_p$-representations.
\end{prop}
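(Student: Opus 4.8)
The plan is to prove that $\widehat{H}^1_{ord, \bar{\mb{F}}_p}$ is non-admissible and to deduce from it the non-admissibility of $\widehat{H}^2_{ss, c, \bar{\mb{F}}_p}$. First I would record the relevant part of the compact-support exact sequence of Section 2.1, passed to the limit over $Np^m$:
$$\widehat{H}^1_{ss,c,\bar{\mb{F}}_p} \to \widehat{H}^1_{\bar{\mb{F}}_p} \to \widehat{H}^1_{ord, \bar{\mb{F}}_p} \to \widehat{H}^2_{ss,c,\bar{\mb{F}}_p} \to \widehat{H}^2_{\bar{\mb{F}}_p}.$$
Here $\widehat{H}^2_{\bar{\mb{F}}_p}=0$: the curves $X(Np^m)$ are proper, so $H^2$ is a sum of copies of $\bar{\mb{F}}_p(-1)$, one per connected component, and the transition maps are pullbacks along covers of $p$-power degree, which act on top étale cohomology as multiplication by a power of $p$, hence by $0$ modulo $p$. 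Consequently $\widehat{H}^1_{ord,\bar{\mb{F}}_p}$ surjects onto $\widehat{H}^2_{ss,c,\bar{\mb{F}}_p}$, and since $\widehat{H}^1_{\bar{\mb{F}}_p}$ is admissible (Proposition 3.4 and the Serre-subcategory property of admissible representations), $\widehat{H}^1_{ord,\bar{\mb{F}}_p}$ is an extension of $\widehat{H}^2_{ss,c,\bar{\mb{F}}_p}$ by a quotient of $\widehat{H}^1_{\bar{\mb{F}}_p}$. Thus it suffices to show $\widehat{H}^1_{ord,\bar{\mb{F}}_p}$ is non-admissible: then so is its quotient $\widehat{H}^2_{ss,c,\bar{\mb{F}}_p}$, for otherwise $\widehat{H}^1_{ord,\bar{\mb{F}}_p}$ would be an extension of two admissible representations.

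For the non-admissibility of $\widehat{H}^1_{ord,\bar{\mb{F}}_p}$ I would use the decomposition of the ordinary locus (Section 2.2 and the discussion of Section 4.3): $\widehat{H}^1_{ord,\bar{\mb{F}}_p} \simeq \Ind^{\GL_2(\mb{Q}_p)}_{B_{\infty}(\mb{Q}_p)}\big(\bigoplus_{a\in\mb{Z}_p^{\times}}\widehat{H}^1_{a,\infty,\bar{\mb{F}}_p}\big)$. Smooth induction from the Borel subgroup preserves admissibility in both directions ($\GL_2(\mb{Q}_p)/B_{\infty}(\mb{Q}_p)$ being compact, together with Lemma 3.4), so $\widehat{H}^1_{ord,\bar{\mb{F}}_p}$ is admissible if and only if each $\widehat{H}^1_{a,\infty,\bar{\mb{F}}_p}$ is admissible as a representation of $B_{\infty}(\mb{Q}_p)$. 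The invariants of $\widehat{H}^1_{a,\infty,\bar{\mb{F}}_p}$ under the level-$p^m$ subgroup are $H^1(\mb{X}_{a,\infty}(Np^m),\bar{\mb{F}}_p)$, and $\mb{X}_{a,\infty}(Np^m)$ is a one-dimensional, non-proper ("wide open" Igusa) rigid analytic curve. I would then argue that such a space has infinite-dimensional mod $p$ étale $H^1$ — the mod $p$ analogue of the phenomenon recorded in Remark 6.4.2 of \cite{ber4} for the open annulus, and visible already from the explicit models of Section 8.1 — so that $\widehat{H}^1_{a,\infty,\bar{\mb{F}}_p}$ has infinite-dimensional invariants under every open subgroup, hence is not admissible. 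This gives the non-admissibility of $\widehat{H}^1_{ord,\bar{\mb{F}}_p}$, and by the previous paragraph that of $\widehat{H}^2_{ss,c,\bar{\mb{F}}_p}$.

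The main obstacle is this last step: for $\ell\ne p$ the étale cohomology of a wide-open curve with $\mb{F}_\ell$-coefficients is finite-dimensional, whereas with $\bar{\mb{F}}_p$-coefficients it is typically infinite-dimensional, and one must make this precise for the Igusa-curve tower — for instance by an explicit Mayer-Vietoris/excision computation in the style of Section 8.2, or by covering $\mb{X}_{a,\infty}(Np^m)$ by disks and annuli and reducing to the annulus case. One must also check the compatibility of the double direct limit with the parabolic-induction decomposition and the smoothness hypothesis of Lemma 3.4. I note in passing the contrast with Proposition 3.8: the compactly supported $H^1$ of the supersingular tube remains admissible because there the $\GL_2(\mb{Q}_p)$-action is the Lubin-Tate action carrying the mod $p$ Langlands correspondence, which is precisely why $\widehat{H}^2_{ss,c,\bar{\mb{F}}_p}$ must be reached indirectly, as the cokernel appearing in the exact sequence above.
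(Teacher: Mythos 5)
Your first paragraph (transferring non-admissibility through the exact sequence $\widehat{H}^1_{ss,c,\bar{\mb{F}}_p} \ra \widehat{H}^1_{\bar{\mb{F}}_p} \ra \widehat{H}^1_{ord,\bar{\mb{F}}_p} \ra \widehat{H}^2_{ss,c,\bar{\mb{F}}_p} \ra \widehat{H}^2_{\bar{\mb{F}}_p}$) matches the paper; incidentally you do not need $\widehat{H}^2_{\bar{\mb{F}}_p}=0$, since a subrepresentation of an admissible representation is again admissible, so admissibility of $\widehat{H}^2_{ss,c,\bar{\mb{F}}_p}$ already makes $\widehat{H}^1_{ord,\bar{\mb{F}}_p}$ an extension of two admissibles. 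The real problem is your treatment of $\widehat{H}^1_{ord,\bar{\mb{F}}_p}$ itself, where there are two genuine gaps. First, the claim you yourself flag as the main obstacle -- that the mod $p$ \'etale $H^1$ of the wide-open Igusa tubes $\mb{X}_{a,\infty}(Np^m)$ is infinite-dimensional -- is precisely the mathematical content your argument needs, and it is not supplied; nothing in the paper proves it for these spaces (the annulus computation of Section 8 concerns the Lubin-Tate space at pro-$p$ Iwahori level, not the ordinary tubes). Second, even granting it, the step ``the invariants of $\widehat{H}^1_{a,\infty,\bar{\mb{F}}_p}$ under the level-$p^m$ subgroup are $H^1(\mb{X}_{a,\infty}(Np^m),\bar{\mb{F}}_p)$'' is false as stated: invariants of a completed cohomology group under an open subgroup do not coincide with finite-level cohomology, but differ by terms $H^i\bigl(K_m,\widehat{H}^0_{a,\infty,\bar{\mb{F}}_p}\bigr)$ via a Hochschild-Serre spectral sequence. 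This is exactly why the paper needs Appendix A and a separate finiteness claim for $H^1(I,\widehat{H}^0_{LT,\bar{\mb{F}}_p})$ in its non-admissibility argument for $\widehat{H}^1_{LT,\bar{\mb{F}}_p}$. To conclude non-admissibility from infinite-dimensionality at finite level you would have to control those $H^1(K_m,\widehat{H}^0)$ terms, which you do not do.

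The paper's own proof of this proposition needs no new geometry: it invokes the corollary proved immediately before (a consequence of the vanishing theorem for $\widehat{H}^1_{LT,c,\bar{\mb{F}}_p}$), namely the $\GL_2(\mb{Q}_p)\times G_{\mb{Q}_p}$-equivariant injection $\pi\otimes\bar{\rho}\hookrightarrow\widehat{H}^1_{ord,\bar{\mb{F}}_p}$. Since $\widehat{H}^1_{ord,\bar{\mb{F}}_p}\simeq \Ind^{\GL_2}_{B(\infty)}\bigl(\bigoplus_a \widehat{H}^1_{a,\infty,\bar{\mb{F}}_p}\bigr)$, admissibility would force, by Lemmas 3.3 and 3.4, the inducing representation to be admissible with trivial unipotent action, so every irreducible subquotient would be a subquotient of a principal series and the localisation at the supersingular $\pi$ would vanish -- contradicting the injection. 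You should either adopt this argument or close the two gaps above; as written, your proposal does not prove the proposition.
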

\begin{proof}
If one of them would be admissible, then also the second would because of the exact sequence
$$\widehat{H}^1 _{ss, c, \bar{\mb{F}}_p} \ra \widehat{H}^1 _{\bar{\mb{F}}_p} \ra \widehat{H}^1 _{ord, \bar{\mb{F}}_p} \ra \widehat{H}^2 _{ss, c, \bar{\mb{F}}_p} \ra \widehat{H}^2 _{\bar{\mb{F}}_p}$$
But we know that $\widehat{H}^1 _{ord, \bar{\mb{F}}_p}$ is an induced representation
$$\Ind ^{\GL_2} _{B(\infty)} \left( \bigoplus _{a} \widehat{H}^1 _{a, \infty, \bar{\mb{F}}_p} \right)$$
so if it were admissible, then the localisation of it at $\pi$ would have to vanish. This is not possible by the corollary above.
\end{proof}
\begin{coro} The $\GL_2(\mb{Q}_p)$-representation $\widehat{H}^2_{LT, c, \bar{\mb{F}}_p}$ is a non-admissible smooth $\bar{\mb{F}}_p$-representation.
\end{coro}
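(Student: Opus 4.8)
The plan is to read this off from the previous proposition together with the description of the supersingular tube obtained in~6.1. First I would record the degree $2$ analogue of the isomorphism proved there for $\widehat{H}^1_c$: the same computation applies verbatim, since $H^2_c(X(Np^m)_{ss},\bar{\mb{F}}_p)$ is the space of functions on the discrete double coset set $D^\times(\mb{Q})\backslash\GL_2(\mb{A}_f^p)/K(N)$ with values in $H^2_c(LT_{\Delta/K_m},\bar{\mb{F}}_p)$, and passing to the limit over $N$ and $m$ gives a $\GL_2(\mb{Q}_p)\times D^\times(\mb{Q}_p)\times G_{\mb{Q}_p}$-equivariant isomorphism
$$\widehat{H}^2_{ss,c,\bar{\mb{F}}_p}\simeq\big(\mbf{F}\otimes_{\bar{\mb{F}}_p}\widehat{H}^2_{LT,c,\bar{\mb{F}}_p}\big)^{D^\times(\mb{Q}_p)}.$$
By the proposition above the left-hand side is non-admissible, so it suffices to show that the functor $V\mapsto(\mbf{F}\otimes_{\bar{\mb{F}}_p}V)^{D^\times(\mb{Q}_p)}$ carries admissible representations to admissible representations; the contrapositive then forces $\widehat{H}^2_{LT,c,\bar{\mb{F}}_p}$ to be non-admissible. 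I would first note that if $\widehat{H}^2_{LT,c,\bar{\mb{F}}_p}$ were admissible over $\GL_2(\mb{Q}_p)$ it would automatically be admissible over $\GL_2(\mb{Q}_p)\times D^\times(\mb{Q}_p)$, because the commuting smooth action of $D^\times(\mb{Q}_p)$ on the finite-dimensional space $V^H$ factors through a quotient which is compact modulo its centre, so $V^{H\times K_p}$ is still finite-dimensional for $K_p$ small.

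To prove that the functor preserves admissibility I would fix compact opens $H\subset\GL_2(\mb{Q}_p)$ and $K^p\subset\GL_2(\mb{A}_f^p)$ and use
$$\big((\mbf{F}\otimes V)^{D^\times(\mb{Q}_p)}\big)^{H\times K^p}\simeq(\mbf{F}^{K^p}\otimes V^H)^{D^\times(\mb{Q}_p)}\simeq\Hom_{D^\times(\mb{Q}_p)}\big((V^H)^\vee,\mbf{F}^{K^p}\big).$$
Here $(V^H)^\vee$ is finite-dimensional, hence fixed by some compact open $K_p'\subset D^\times(\mb{Q}_p)$ and acted on by the centre $\mb{Q}_p^\times$ with only finitely many generalized eigencharacters, so the $\Hom$ only sees the corresponding generalized central-character components of $\mbf{F}^{K^pK_p'}$. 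By the finiteness of the class number of $D$ and the fact that $D^\times(\mb{Q}_p)$ is compact modulo $\mb{Q}_p^\times$, the space $\mbf{F}^{K^pK_p'}$ is, as a smooth $\mb{Q}_p^\times$-representation, a finite direct sum of spaces $C^\infty(\Gamma_j\backslash\mb{Q}_p^\times,\bar{\mb{F}}_p)$ for suitable subgroups $\Gamma_j\subset\mb{Q}_p^\times$, and in each of these a generalized eigenspace of $\mb{Q}_p^\times$ is finite-dimensional (along the valuation direction it is cut out by a power of a difference operator on $\mathrm{Fun}(\mb{Z},\bar{\mb{F}}_p)$). Hence the $\Hom$ is finite-dimensional, $(\mbf{F}\otimes V)^{D^\times(\mb{Q}_p)}$ is admissible over $\GL_2(\mb{Q}_p)$, and the corollary follows.

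The hard part will be exactly this last step: because $D^\times(\mb{Q}_p)$ is only compact \emph{modulo its centre}, $\mbf{F}^{K^p}$ is itself not admissible as a $D^\times(\mb{Q}_p)$-representation, and the finiteness of $\Hom_{D^\times(\mb{Q}_p)}((V^H)^\vee,\mbf{F}^{K^p})$ has to be extracted by decomposing along generalized central characters and invoking finiteness of class numbers of $D$. For comparison, one should note that the Hochschild--Serre argument used for $\widehat{H}^1_{ss,\bar{\mb{F}}_p}$ does not transpose cleanly here: since $\widehat{H}^0_{LT,c,\bar{\mb{F}}_p}=0$ one does get an exact sequence $0\to H^1(I(1),\widehat{H}^1_{LT,c,\bar{\mb{F}}_p})\to H^2_{LT,c,I(1),\bar{\mb{F}}_p}\to(\widehat{H}^2_{LT,c,\bar{\mb{F}}_p})^{I(1)}$ with $H^2_{LT,c,I(1),\bar{\mb{F}}_p}$ infinite-dimensional (it is an infinite direct sum of copies of $H^2_c(\mc{M}_{LT,I(1)},\bar{\mb{F}}_p)\cong\bar{\mb{F}}_p^{\,p-1}\neq 0$), but deducing a contradiction with admissibility of $\widehat{H}^2_{LT,c,\bar{\mb{F}}_p}$ would also require controlling $H^1(I(1),\widehat{H}^1_{LT,c,\bar{\mb{F}}_p})$, hence the admissibility of $\widehat{H}^1_{LT,c,\bar{\mb{F}}_p}$, which this route does not supply.
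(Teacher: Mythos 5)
Your route matches what the paper intends: the degree-$2$ analogue of the isomorphism from Section~6.1 reads $\widehat{H}^2_{ss,c,\bar{\mb{F}}_p}\simeq(\mbf{F}\otimes_{\bar{\mb{F}}_p}\widehat{H}^2_{LT,c,\bar{\mb{F}}_p})^{D^\times(\mb{Q}_p)}$, and non-admissibility then propagates from the left side (Proposition~8.5) to $\widehat{H}^2_{LT,c,\bar{\mb{F}}_p}$ once one knows that the functor $V\mapsto(\mbf{F}\otimes V)^{D^\times(\mb{Q}_p)}$ preserves admissibility; your observation that the Hochschild--Serre route of Section~7.3 does not transpose cleanly is also a fair remark.

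Your final paragraph, however, is built on a false premise: $\mbf{F}^{K^p}$ \emph{is} admissible as a $D^\times(\mb{Q}_p)$-representation. Since $D$ is a definite quaternion algebra (ramified at $\infty$), the double coset set $D^\times(\mb{Q})\backslash D^\times(\mb{A}_f)/K$ is finite for every compact open $K\subset D^\times(\mb{A}_f)$, so $\mbf{F}^{K^pK_p'}$ is a finite-dimensional $\bar{\mb{F}}_p$-vector space for every compact open $K_p'\subset D^\times(\mb{Q}_p)$; the non-compactness of $D^\times(\mb{Q}_p)$ modulo its centre is absorbed by the ramification at $\infty$ and the finiteness of the class number of $\mb{Q}$. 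Consequently the entire generalized-central-character decomposition is unnecessary: $(V^H)^\vee$ is finite-dimensional, hence fixed by some compact open $K_p'\subset D^\times(\mb{Q}_p)$, so any $D^\times(\mb{Q}_p)$-equivariant map $(V^H)^\vee\to\mbf{F}^{K^p}$ lands in $\mbf{F}^{K^pK_p'}$, and
$$\Hom_{D^\times(\mb{Q}_p)}\bigl((V^H)^\vee,\mbf{F}^{K^p}\bigr)\hookrightarrow\Hom_{\bar{\mb{F}}_p}\bigl((V^H)^\vee,\mbf{F}^{K^pK_p'}\bigr)$$
is finite-dimensional at once, giving the preservation of admissibility and hence the corollary.
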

\begin{proof} Follows from the proposition above.
\end{proof}

\subsection{Cohomology of Shimura curves} We will briefly sketch another consequence of vanishing of $\widehat{H}^1 _{LT, c, \bar{\mb{F}}_p, (\pi)}$. First of all, it implies that $H^1 _c (\mc{M} _{LT}, \bar{\mb{F}}_p) _{(\pi)}$ vanishes because $\widehat{H}^1 _{LT, c, \bar{\mb{F}}_p, (\pi)}$ is just a sum of copies of $H^1 _c (\mc{M} _{LT}) _{(\pi)}$. Now recall the Faltings isomorphism (see \cite{fa2}) which gives us 
$$H^1 _c (\mc{M} _{LT}, \bar{\mb{F}}_p) _{(\pi)} = H^1 _c (\mc{M} _{Dr}, \bar{\mb{F}}_p) _{(\pi)} = 0$$
where $\mc{M} _{Dr}$ is the Drinfeld tower at infinity (see \cite{da} for details). We have a spectral sequence coming from the p-adic uniformisation of the Shimura curve $Sh$ associated to the algebraic group $G''$ arising from the quaternion algebra over $\mb{Q}$ which is ramified precisely at $p$ and some other prime $q$:
$$E_2 ^{p,q} = \Ext ^p _{\GL_2(\mb{Q}_p)}(H^{2-q} _c(\mc{M} _{Dr , K_p}, \bar{\mb{F}}_p), C^{\infty}(G'(\mb{Q}) \backslash G'(\mb{A}), \bar{\mb{F}}_p)^{K^p}) \Rightarrow H^{p+q}_c(Sh ^{an} _{K_pK^p}, \bar{\mb{F}}_p)$$
where we have denoted by $G'$ the algebraic group arising from the quaternion algebra over $\mb{Q}$ which is ramified precisely at $q$ and $\infty$. For this, see \cite{fa} where it is proven for $\bar{\mb{Q}}_l$ but the proof works also for $\bar{\mb{F}}_p$ (the proof is also contained in the appendix B of \cite{da2}) .
 
Choose any non-Eisenstein maximal ideal $\mf{n}$ in the Hecke algebra of $G''$ whose associated Galois representation corresponds at $p$ to the supersingular representation $\pi$ we have chosen before. Take the direct limit over $K_p$ and localise the above spectral sequence at $\mf{n}$ to get
$$\Ext ^p _{\GL_2(\mb{Q}_p)}(H^{2-q} _c(\mc{M} _{Dr}, \bar{\mb{F}}_p) _{(\pi)}, C^{\infty}(G'(\mb{Q}) \backslash G'(\mb{A}), \bar{\mb{F}}_p)^{K^p} _{\mf{n}}) \Rightarrow H^{p+q}_c(Sh ^{an} _{K^p}, \bar{\mb{F}}_p) _{\mf{n}}$$
The localisation of $H^{2-q} _c(\mc{M} _{Dr}, \bar{\mb{F}}_p)$ at $\pi$ appears because $C^{\infty}(G'(\mb{Q}) \backslash G'(\mb{A}), \bar{\mb{F}}_p)^{K^p} _{\mf{n}}$ is $\pi$-isotypic. Using our vanishing result we get an interesting isomorphism
$$\Ext ^1 _{\GL_2(\mb{Q}_p)}(H^{2} _c(\mc{M} _{Dr}, \bar{\mb{F}}_p) _{(\pi)}, C^{\infty}(G'(\mb{Q}) \backslash G'(\mb{A}), \bar{\mb{F}}_p)^{K^p} _{\mf{n}}) \simeq H^{1}_c(Sh ^{an} _{K^p}, \bar{\mb{F}}_p) _{\mf{n}}$$
This can be possibly used to study the mod p cohomology of the Shimura curve $Sh$. We shall treat this issue elsewhere.

\section{Concluding remarks}

Let us finish by giving some remarks and stating natural questions.

\subsection{l-adic case} Observe that our arguments work well also in the mod $l\not =p$ setting and circumvent the use of vanishing cycles. The idea of localisation at a supersingular (supercuspidal) representation appears also in the work of Dat. See especially \cite{da} where the author discusses localisations both for $\GL_n$ and quaternion algebras and then uses it to describe the supercuspidal part of the cohomology. 

One might want also to see \cite{sh}, which bears some resemblance to certain arguments we use. Shin describes the mod l cohomology of Shimura varieties by using results of Dat about the mod l cohomology of the Lubin-Tate tower. In our work, we start from global results of Emerton to deduce from them statements about local objects.

\subsection{Beyond modular curves} The geometric arguments we have given also applies to Shimura curves considered by Carayol in \cite{ca} and we can consider similar exact sequences relating the ordinary locus and the supersingular locus in this setting. Nevertheless, in this case we cannot go on with arguments as we do not have a definition of the mod p local Langlands correspondence for extensions of $\mb{Q}_p$. In fact, such a construction seems a little bit problematic as might be seen from the work of Breuil-Paskunas (\cite{bp}), where the authors show that there are much more automorphic representations than Galois representations. The hope is that by looking at the cohomology of the Lubin-Tate tower, one should be able to tell how the correspondence should look like. We will pursue this subject in our subsequent work.

\subsection{Adic spaces} We have chosen to work with Berkovich spaces, but one might as well wonder how the things translate into the setting of adic spaces of R. Huber (\cite{hu}). In fact, everything that we have considered can be rewritten in the language of adic spaces and we might consider the same long exact sequences as above (though these exact sequences will be inversed due to the fact that adic spaces behave like formal schemes). The main difference between those two contexts lies in the ordinary locus which in the case of adic spaces will contain additional points which lie in the closure of the ordinary locus from the setting of Berkovich spaces. Nevertheless, the cohomology groups in both settings will be similar. Let us remark also, that the comparison between mod p \'etale cohomology of a formal scheme and its (adic) analytification is proved in Theorem 3.7.2 of \cite{hu}.

\subsection{Serre's letters} Though it does not appear explicitely in our work (besides the comparison of Hecke algebras), we were influenced by two letters written by Jean-Pierre Serre (see \cite{se}). It is there that in some sense appears for the first time the modified mod l Local Langlands correspondence which goes under the name of the universal unramified representation (see the letter to Kazhdan). Indeed, if we were to suppose that our global lift $\bar{\rho}$ which we have used is actually unramified everywhere outside $p$, then there is no need to recall either the modified mod l Local Langlands correspondence or new vectors, and we could formulate everything in the language of Serre.

\appendix
\section{The Hochschild-Serre spectral sequence}

In the body of the text we have used the unpublished manuscript of Berkovich (\cite{ber3}) where, among others, appears the Hochschild-Serre spectral sequence for the cohomology with compact support. For the sake of completeness, we will sketch a proof of existence of this spectral sequence here. We thank Vladimir Berkovich for sending us his preprint.

\subsection{G-spaces} Recall that an analytic space (in the sense of Berkovich) is a $k$-analytic space over some non-archimedean field $k$. Given two analytic spaces $X$ and $Y$, let $\Mor(X,Y)$ denote the set of morphisms $X \ra Y$ and let $\mc{G}(X)$ be the group of automorphisms of $X$. Berkovich defined in \cite{ber1} a uniform space structure (and in particular, a topology) on $\Mor(X,Y)$. Then, the group $\mc{G}(X)$ has the topology induced from $\Mor(X,X)$. We say that the action of a topological group $G$ on an analytic space $X$ is continuous if the induced homomorphism $G \ra \mc{G}(X)$ is continuous. An analytic space endowed with a continuous action of a topological group $G$ will be called a $G$-space. A $G$-equivariant morphism between two $G$-spaces will be called a $G$-morphism. The category of analytic spaces is the category of pairs $(X,G)$, where $G$ is a topological group and $X$ is a $G$-space. We will denote this pair by $X(G)$. A morphism between such spaces $\phi : X'(G') \ra X(G)$ is a pair consisting of a continuous homomorphism of topological groups $\nu _{\phi}: G' \ra G$ and a morphism of analytic spaces $\phi : X' \ra X$ compatible with the homomorphism $\nu _{\phi}$.  A $G$-morphism $\phi: X' \ra X$ between $G$-spaces gives rise to a morphism $\phi : X'(G) \ra X(G)$ for which $\nu _{\phi}$ is the identity map on $G$. If $X$ is a $G$-space then the action of $G$ on $X$ extends to a natural action of $G$ on $X(G)$ for which $\nu _{g} (g') = gg' g^{-1}$, where $\nu _g$ is the morphism given by an element $g \in G$. For a $G$-space $X$ we have a morphism $b: X \ra X(G)$ where $X = X(\{1\})$.

\subsection{\'Etale topology} Berkovich has defined the \'etale topology on analytic spaces and similarly we can define the \'etale topology on $G$-analytic spaces. For a $G$-space $X$, let $Et(X(G))$ denote the category of \'etale morphisms $U(G) \ra X(G)$. The \'etale topology on $X(G)$ is the Grothendieck topology on the category $Et(X(G))$ with coverings of $U(G)\ra X(G)$ consisting of families $(U_i(G) \ra U(G))_{i \in I}$ such that $(U_i \ra U)_{i \in I}$ is a covering in the \'etale topology of $X$. We denote this site by $X(G) _{et}$ and its corresponding topos by $X(G) _{et} ^{\sim}$. In a similar way, we can also introduce a quasi-\'etale site $X(G) _{qet}$ and its topos.

We denote by $\Gamma _{X(G)}$ the functor of global sections on $X(G) _{et} ^{~}$, that is $\Gamma _{X(G)}(F) = F(X(G))$. The higher direct images of $\Gamma _{X(G)}$ on the category of abelian sheaves will be denoted by $F \mapsto H^i(X(G),F)$. Let $F$ be an \'etale abelian sheaf on $X(G)$. The support of $f \in F(X(G))$ is the (closed) set $\Supp (f) = \{ x \in X \ | \ f_x \not =0 \}$, where $f_x$ is the image of $f$ in $F_x$. The cohomology groups with compact support are higher direct images of the functor $F \mapsto \Gamma _{c, X(G)}(F) = \{ f \in F(X(G)) \ | \ \Supp(f) \textrm{ is compact} \}$ and are denoted by $F \mapsto H^i_c(X(G),F)$. We consider also the higher direct functor of $F \mapsto \Gamma _{c, X\{G\}}(F) := \varinjlim \Gamma _{c, X(N)}(F)$ where $N$ runs through open subgroups of $G$ which we will denote by $F \mapsto H^i_c(X\{G\}, F)$. We have $H^i_c(X\{G\},F) = \varinjlim H^i _c (X(N), F)$. The proposition (Corollary 1.5.2 in \cite{ber3}) we have used in the main text was
\begin{prop} For any \'etale abelian sheaf $F$ on $X(G)$ there are canonical isomorphisms 
$$H^i_c(X\{G\},F) \simeq H^i _c(X, b^* F)$$
for $i \geq 0$. In particular, there is a spectral sequence
$$ E_2 ^{p,q} = H^p(G, H^q _c(X, b^* F)) \Rightarrow H^{p+q} _c(X(G),F)$$
\end{prop}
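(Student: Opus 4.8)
The plan is to realise $H^{i}_c(X(G),F)$ as the cohomology of a composite functor, in the spirit of the classical Hochschild--Serre argument. The key observation is the factorisation
$$\Gamma_{c,X(G)} \;=\; (-)^{G}\circ \Gamma_{c,X\{G\}}$$
on \'etale abelian sheaves on $X(G)$, where $(-)^{G}$ denotes $G$-invariants on smooth $G$-modules. By definition $\Gamma_{c,X\{G\}}(F)=\varinjlim_{N}\Gamma_{c,X(N)}(F)$ is a smooth $G$-module ($G$ permutes the open subgroups $N$ and acts on $X$, and any element is fixed by some small $N$), and a compactly supported section over $X(G)$ is the same datum as a section over some $X(N)$ that becomes $G$-invariant after further shrinking $N$; hence the natural map $\bigl(\varinjlim_{N}\Gamma_{c,X(N)}(F)\bigr)^{G}\to\Gamma_{c,X(G)}(F)$ is an isomorphism, compatibly with supports. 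Granting in addition that $\Gamma_{c,X\{G\}}$ takes injective sheaves on $X(G)_{et}$ to $(-)^{G}$-acyclic $G$-modules, the Grothendieck spectral sequence of the composite gives
$$E_{2}^{p,q}=H^{p}\bigl(G,\;R^{q}\Gamma_{c,X\{G\}}(F)\bigr)\ \Rightarrow\ H^{p+q}_c(X(G),F).$$

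The second ingredient is the identification of $R^{q}\Gamma_{c,X\{G\}}(F)$ with the smooth $G$-module $H^{q}_c(X,b^{*}F)$; this is exactly the first assertion of the proposition, and it rewrites the coefficients of the $E_{2}$-term. Resolving $F$ by injectives and using that at each finite level injectives are $\Gamma_{c,X(N)}$-acyclic together with exactness of filtered colimits, one gets $R^{q}\Gamma_{c,X\{G\}}(F)=\varinjlim_{N}H^{q}_c(X(N),F)$, which is already recorded in the text. To evaluate this colimit I would compare the pro-system of topoi $\{X(N)_{et}^{\sim}\}_{N}$ with $X_{et}^{\sim}$: for $N$ running over the normal open subgroups of $G$, an object of $Et(X(N))$ is an \'etale $U\to X$ equipped with a compatible $N$-action, and in the $2$-colimit over shrinking $N$ this group decoration becomes vacuous, so $\varinjlim_{N}X(N)_{et}^{\sim}\simeq X_{et}^{\sim}$ with $b^{*}$ the canonical projection. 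Since $H^{*}_c$ commutes with filtered colimits of coefficient sheaves and the relevant supports are already compact at finite level, this yields $\varinjlim_{N}H^{q}_c(X(N),F)\simeq H^{q}_c(X,b^{*}F)$; the residual $G$-action on the colimit transports to the required smooth $G$-module structure on the right, and this is the asserted canonical isomorphism $H^{q}_c(X\{G\},F)\simeq H^{q}_c(X,b^{*}F)$.

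Third, I would check the acyclicity invoked in the first step: an injective sheaf $\mc{I}$ on $X(G)_{et}$ restricts to a $\Gamma_{c}$-acyclic sheaf on each $X(N)_{et}$ (the restriction functors $X(N)_{et}^{\sim}\to X(G)_{et}^{\sim}$ being exact with exact left adjoints, hence preserving injectivity, so that higher $H^{*}_c$ vanish at every finite level), whence $\Gamma_{c,X\{G\}}(\mc{I})=\varinjlim_{N}\Gamma_{c,X(N)}(\mc{I})$ is a filtered colimit of such acyclic pieces and a standard argument (exactness of the left adjoint of $\Gamma_{c,X\{G\}}$, or direct d\'evissage) shows this smooth $G$-module is $(-)^{G}$-acyclic. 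Substituting $R^{q}\Gamma_{c,X\{G\}}(F)=H^{q}_c(X,b^{*}F)$ into the composite spectral sequence produces precisely $E_{2}^{p,q}=H^{p}\bigl(G,H^{q}_c(X,b^{*}F)\bigr)\Rightarrow H^{p+q}_c(X(G),F)$, as desired.

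The step I expect to be the main obstacle is the comparison of the tower of \'etale topoi $\{X(N)_{et}^{\sim}\}$ with $X_{et}^{\sim}$ in a way compatible with compactly supported cohomology: one has to control supports under the filtered colimit — a section with compact support at infinite level must already be supported compactly at some finite level, and conversely — and one has to know that $H^{*}_c$ commutes with the relevant filtered colimit of sheaves, which is where Berkovich's finiteness results for the \'etale cohomology of analytic spaces and a careful bookkeeping of the $G$-action on the pro-system $\{X(N)\}$ really enter. The factorisation of $\Gamma_{c,X(G)}$ and the injective-acyclicity statement are, by comparison, formal.
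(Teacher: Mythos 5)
Your overall skeleton --- factoring $\Gamma_{c,X(G)} = (-)^{G}\circ \Gamma_{c,X\{G\}}$, running the Grothendieck spectral sequence, and reducing everything to the identification $R^{q}\Gamma_{c,X\{G\}}(F)\simeq H^{q}_c(X,b^{*}F)$ --- is consistent with how the spectral sequence arises, and your treatment of degree $0$ (a compactly supported section of $b^{*}F$ is fixed by some open subgroup, because its support is compact and the action is continuous) is exactly the paper's remark. But the step you yourself flag as the main obstacle is precisely the content of the proposition, and your proposed substitute does not prove it. You assert $\varinjlim_{N} X(N)_{et}^{\sim}\simeq X_{et}^{\sim}$ because ``the group decoration becomes vacuous'' in the colimit; this amounts to the claim that every \'etale $U\ra X$ (or at least a cofinal family of such) carries a compatible action of some small open $N$, a nontrivial continuity statement about the topology Berkovich puts on $\Mor(U,U)$, which you neither prove nor reference, and which is delicate exactly because \'etale morphisms of analytic spaces need not be quasi-compact. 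Worse, even granting that comparison of topoi, the appeal to ``$H^{*}_c$ commutes with filtered colimits of coefficient sheaves'' is not the statement you need: what is required is commutation of compactly supported cohomology with a filtered colimit of \emph{sites} --- equivalently, acyclicity for $\Gamma_c$ on $X$ of the pullbacks under $b^{*}$ of the sheaves used to resolve $F$ on $X(G)_{et}$ --- not with a colimit of sheaves on a fixed space, and no general coherent-topos theorem applies to $H_c$ of non-compact analytic spaces.

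This is exactly where the paper's (Berkovich's) proof does its real work: one builds the Godement resolution $\mc{C}^{\bullet}(F)$ on $X(G)_{et}$ from a morphism of sites $Top(I)\ra X(G)_{et}$, and then proves that each $b^{*}(\mc{C}^{m}(F))$ is \emph{soft} on $X_{et}$ --- its stalks are flabby Galois modules and its restriction to the underlying topological space of any paracompact \'etale $U$ is a soft sheaf --- so that the single complex $b^{*}\mc{C}^{\bullet}(F)$ computes $H^{*}_c(X,b^{*}F)$ while $\mc{C}^{\bullet}(F)$ computes $H^{*}_c(X\{G\},F)$, and the degree-$0$ isomorphism then propagates to all degrees. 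Your proposal contains no replacement for this softness/acyclicity input (your third paragraph only concerns $(-)^{G}$-acyclicity of $\Gamma_{c,X\{G\}}$ applied to injectives, a different and essentially formal point), so as it stands the argument has a genuine gap at its central step.
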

We have applied it to the Lubin-Tate tower $X = \varprojlim _n \mc{M} _{LT, n}$ and $G= I(1)$ the pro-p Iwahori subgroup of $\GL_2(\mb{Q}_p)$, by applying it to each $\mc{M} _{LT, n}$ and taking direct limit in the spectral sequence above (as $X$ is not a Berkovich space we cannot put it directly in the spectral sequence). We are using also the fact that we have an equivalence of topoi $X(G) _{et} ^{\sim} \simeq (G\backslash X) _{et} ^{\sim}$, whenever $G\backslash X$ exists and $X \ra G\backslash X$ is \'etale. This is so in our case, where $G\backslash X = \mc{M} _{LT, I(1)}$. 
\begin{proof} We sketch the proof of this proposition. The case $i = 0$ follows from the fact that every element of $H^0_c(X, b^*F)$ is fixed by an open subgroup of $G$. Then the general case follows by constructing the Godement resolution in our context. Namely, for a topological space $I$ denote by $Top(I)$ the site on the category of local homeomorphisms $J \ra I$ (with the evident topology). Suppose we have a surjective map $I \ra X : i \mapsto x_i$. We endow $I$ with the discrete topology and we fix for each $i \in I$ a geometric point $\bar{x}_i$ over $x_i$. This gives rise to a morphism of sites $\nu : Top(I) \ra X(G) _{et}$. For an \'etale abelian sheaf $F$ on $X(G)$, its Godement resolution $\mc{C} ^{\bullet}(F)$ is constructed as follows:

(i) $\mc{C} ^0(F) = \nu _* \nu ^* (F)$ and let $d^{-1}: F \ra \mc{C}^0 (F)$ be the adjunction map

(ii) if $m \geq 0$, then put $\mc{C} ^{m+1}(F) = \mc{C}^0(coker \ d^{m-1})$, and let $d^m$ be the canonical map $\mc{C}^m(F) \ra \mc{C}^{m+1}(F)$.

This construction is taken from SGA 4, Exp. XVII, where it is shown that

(a) $\mc{C} ^m(F)$ is a flabby sheaf

(b) the functor $F \mapsto \mc{C} ^m(F)$ is exact

(c) the fibre of the complex $\mc{C} ^{\bullet}(F)$ at a point $x \in X$ is a canonically split resolution of $F_x$. 

Then Berkovich shows (Proposition 1.5.1), that for any $F \in X(G)_{et} ^{\sim}$ and $m \geq 0$, the sheaf $b^*(\mc{C}^m(F))$ is soft on $X_{et}$, where we say that a sheaf $\mc{F}$ is soft on $X_{et}$ (after chapter 3 in \cite{ber1}) when it satisfies the following two conditions

(1) for any $x\in X$, the stalk $\mc{F} _x$ is a flabby $\Gal _{\mc{H}(x)} = \Gal (\overline{\mc{H}(x)} / \mc{H}(x))$-module, where $\mc{H}(x)$ is the complete field associated to $x$ by the standard procedure.

(2) for any paracompact $U$ \'etale over $X$, the restriction of $\mc{F}$ to the usual topology $|U|$ of $U$ is a soft sheaf, that is, for any compact subset $T \subset U$, the map $\mc{F}(U) \ra \mc{F}(T)$ is surjective.

Then one shows (see Lemma 3.2 in \cite{ber1}) that we can compute the cohomology with soft sheaves on $X_{et}$. Remark also that to prove the theorem it is enough to prove that $\mc{F} = b^*(\mc{C}^0(F))$ is soft, by the inductive definition of the Godement resolution. This is done by checking explicitely the conditions (1) and (2) for such an $\mc{F}$. We omit the computations.
\end{proof}

\end{document}